\def\a{\alpha} \def\b{\beta} \def\d{\delta}  \def\f{\varphi}
\def\l{\lambda} \def\s{\sigma}  
 \def\I{\mathbb{I}}
 \def\minus{\smallsetminus}
\def\({\left(} \def\){\right)} 
\def\<{\langle} \def\>{\rangle}
\def\inv{^{-1}}
\def\N{\mathbb{N}}
\renewcommand\ge{\geqslant}
\renewcommand\le{\leqslant}
\newcommand\ie{i.e.}
\newcommand\forget[1]{}
\newcommand\rned[1]{\left\lfloor#1\right\rfloor}
\newcommand{\tp}{\otimes}
\DeclareMathOperator{\spann}{span}
\DeclareMathOperator{\Aut}{Aut}
\newcommand{\chr}{\mathop{\rm char}\nolimits}
\newcommand{\nperp}{\not\perp}
\newenvironment{smatrix}{\left(\begin{smallmatrix}}{\end{smallmatrix}\right)}
\newcommand{\B}{\mathcal{B}}
\DeclareMathOperator{\rad}{\operatorname{rad}}
\DeclareMathOperator{\soc}{\operatorname{soc}}
\DeclareMathOperator{\modu}{\operatorname{mod}}
\newtheorem{thm}{Theorem}
\newtheorem{lma}[thm]{Lemma}
\newtheorem{prop}[thm]{Proposition}
\newtheorem{cor}[thm]{Corollary}
\theoremstyle{definition}
\theoremstyle{remark}
\newtheorem{rmk}[thm]{Remark}
\theoremstyle{definition}
\newtheorem{ex}[thm]{Example}
\title{The Loewy length of a tensor product of modules of a dihedral two-group}
\author{Erik Darp\"o}
\address{Darp\"o: Graduate School of Mathematics, Nagoya University, Furocho, Chikusaku, Nagoya,
  Japan}
  \email{darpo@math.nagoya-u.ac.jp}
\author{Christopher C.~Gill}
\address{Gill: Department of Algebra, Charles University, Sokolovska~83, Praha~8, 186~75, Czech
Republic}
\email{gill@karlin.mff.cuni.cz}
\thanks{\emph{MSC 2010: 20C20 (19A22)}}
\thanks{The first author acknowledges support from the Swedish Research Council, Grant
  no~623-2009-709.}
\begin{document}
\selectlanguage{UKenglish}
\date{}

\begin{abstract}
While the finite-dimensional modules of the dihedral $2$-groups over fields of
characteristic $2$ were classified over 30 years ago, very little is known about the
tensor products of such modules. 
In this article, we compute the Loewy length of the tensor product of two modules of a
dihedral two-group in characteristic $2$. 
As an immediate consequence, we determine when such a tensor product has a projective
direct summand.
\end{abstract}

\keywords{Dihedral group, modular representation tensor product, Loewy length, Green
  ring}

\maketitle 

\section{Introduction}

The tensor product is an invaluable and frequently used tool in the representation theory
of finite groups.  Given a field $K$ and a finite group $G$, the co-algebra structure of
$KG$, defined by $\Delta(g)=g\tp g$ for all $g\in G$, gives rise to a tensor product on
the category $\modu KG$ of finite-dimensional $KG$-modules: 
$x\cdot(m\tp n)=\Delta(x)(m\tp n)$ for $m\in M$, $n\in N$ and $x\in KG$.
The tensor product of two indecomposable $KG$-modules is usually not indecomposable, and
the problem of determining a direct sum decomposition -- the Clebsch--Gordan problem -- is
extremely difficult and in general not well understood. 

One approach to studying the tensor product of $KG$-modules goes via the representation
ring, or Green ring, $A(KG)$, which encodes the behaviour of the tensor product in its
multiplicative structure. This approach was pioneered by J.~A. Green in \cite{green61},
who proved that the Green ring of a cyclic $p$-group is semi-simple. 
Much of the research on Green rings since has focussed on the question of semisimplicity,
asking for which group algebras $KG$ the Green ring $A(KG)$ contains nilpotent elements. 
Notably, Benson and Carlson \cite{Benson&Carlson}
provided a general method to construct nilpotent elements in Green rings, and defined an
ideal $A(KG;p)$ of $A(KG)$ (here $p=\chr K$) such that the quotient $A(KG)/A(KG;p)$ has no
nilpotent elements.
 
The most complete results concerning direct sum decompositions of tensor products are
for cyclic $p$-groups and the Klein four-group $V_4$.  The indecomposable modules of $V_4$
over a field of characteristic $2$ were first determined by Kronecker, and 
Conlon \cite{conlon65} computed the direct sum decompositions of tensor products of
such modules. Both results are surveyed in \cite{archer08}.
The indecomposable modules of cyclic $p$-groups over a field of characteristic $p$
correspond to Jordan blocks with eigenvalue $1$, with the tensor product of modules given
by the Kronecker product of matrices. 
The problem of decomposing tensor products of cyclic $p$-groups has been studied by several
authors \cite{green61,srinivasan64,iwamatsu07a,norman08,barry11}.
However, all solutions to this problem that have been published so far, to our knowledge,
are recursive; no closed formula for the decomposition seems to be known.

Let $k$ be a field of characteristic 2, and $D_{4q}$ the dihedral
group of order $4q$, where $q\ge2$ is a 2-power. The indecomposable $kD_{4q}$-modules were classified over thirty years ago by
Ringel \cite{ringel75}.  However, in contrast with the cyclic $p$-groups and $V_4$, the
behaviour of the tensor product of $kD_{4q}$-modules is not well understood.  The Clebsch--Gordan problem for $kD_{4q}$ remains far from being solved, and progress has been limited to
some special cases.
One example is the work \cite{bessenrodt91} by Bessenrodt, classifying all
\emph{endotrivial} $kD_{4q}$-modules, that is, modules $M$ with the property that 
$M^*\tp M$ is a direct sum of the trival module and a projective module.
Archer \cite{archer08} studied the Benson--Carlson quotient $A(kD_{4q})/A(kD_{4q};2)$
when $k$ is algebraically closed,
showing how multiplication in this quotient is related to the Auslander--Reiten quiver of
$kD_{4q}$, and realising the quotient as the integral group ring of an infinitely
generated, torsion-free abelian group.

In this article, using the classification of indecomposable modules, we determine the Loewy
length of the tensor product of any two finite-dimensional $kD_{4q}$-modules. This
provides an additional piece of information towards the understanding of the Green rings of
the dihedral $2$-groups, and gives certain bounds on which modules can occur as direct
summands of such a tensor product. 
As an application, we determine precisely which tensor products have maximal Loewy length, that is, which tensor products have projective direct summands.

Write $D_{4q}=\<\s,\tau\mid \s^2=\tau^2=(\s\tau)^{2q}=1\>$.  
Then 
\begin{equation} \label{isomorphism}
kD_{4q} \:\tilde{\to}\: \frac{k\<X,Y\>}{\left(\, X^2,\,Y^2,\,(XY)^q+(YX)^q \,\right)} 
\quad\mbox{via}\quad\begin{cases}\s\mapsto 1+X ,\\ \tau\mapsto 1+Y . \end{cases}
\end{equation}
In particular, every $kD_{4q}$-module is also a module of the algebra
$\Lambda_0=k\<X,Y\>/(X^2,\,Y^2)$ and conversely, every finite-dimensional
$\Lambda_0$-module is a module of $kD_{4q}$ for sufficiently large $q$.
From here on, all modules are assumed to be finite dimensional.
The algebras $kD_{4q}$ are special biserial, hence the indecomposable modules are of three
types: strings, bands and projectives.
Below we recollect the classification of the indecomposable $kD_{4q}$-modules, due to
Ringel \cite{ringel75}.

Let $\mathcal{W}$ be the set of words $a_1\cdots a_n$ ($n\ge0$), in the alphabet
$X,X\inv,Y,Y\inv$ with the property that if $a_i\in\{X,X\inv\}$ then
$a_{i+1}\in\{Y,Y\inv\}$ and if $a_i\in\{Y,Y\inv\}$ then $a_{i+1}\in\{X,X\inv\}$. The empty
word is denoted by $1$.
For any word $w=a_1\cdots a_n\in\mathcal{W}$, set $w\inv=a_n\inv\cdots a_1\inv$.
Take $\sim_1$ to be the equivalence relation on $\mathcal{W}$ identifying every word $w$
with its inverse $w\inv$.

Let $\mathcal{W}'\subset\mathcal{W}$ be the set of words $w$ with the following
properties:
\begin{enumerate}
\item $w$ has even, positive length,
\item $w$ is not a power of a word of smaller length,
\item $w$ contains letters from both  $\{X,Y\}$ and $\{X\inv,Y\inv\}$.
\end{enumerate}
Define an equivalence relation $\sim_2$ on $\mathcal{W}'$  by saying that
$w\sim_2 w'$ if, and only if, either $w$ or $w\inv$ is a cyclic permutation of $w'$.

Given a word $w=a_1\cdots a_m\in\mathcal{W}$, a $\Lambda_0$-module $M(w)$ is defined as
follows:
$M(w)=\bigoplus_{i=0}^m ke_i$, and the 
action of $Z\in\{X,Y\}$ on $M(w)$ is defined by
\begin{equation} \label{stringaction}
  Z\cdot e_i=
  \begin{cases}
    e_{i-1} &\mbox{if}\quad i>0,\: a_i=Z, \\
    e_{i+1} &\mbox{if}\quad i<m,\: a_{i+1}=Z\inv, \\
    0      &\mbox{otherwise}.
  \end{cases}
\end{equation}

It is often helpful to picture the module $M(w)$, where $w=l_1l_2\cdots l_m$, by a schema  
$$\xymatrix{ke_0&ke_1\ar^{l_1}[l]&ke_2\ar^{l_2}[l]&\cdots\ar^{l_3}[l] &ke_{m-1}\ar^{l_{m-1}}[l]&ke_m\ar^{l_m}[l]}$$
The practice is to change the direction of the arrows in the schema that represent inverted
elements: $l_i=X\inv$ or $l_i=Y\inv$.
 For example, if $w=XYXY^{-1}X^{-1}YX^{-1}Y^{-1}$, then the schema of $M(w)$ is written
 as
\[\xymatrix@!0{&&&{ke_3}\ar^Y[dr]\ar_X[dl]&&&&\\
&&{ke_2}\ar_Y[dl]&&{ke_4}\ar^X[dr]&&ke_6\ar_Y[dl]\ar^X[dr]&&\\
&ke_1\ar_X[dl]&&&&ke_5&&ke_7\ar^Y[dr]&\\
ke_0&&&&&&&&ke_8
}\]

Next, let $\f$ be an indecomposable linear automorphism of $k^n$, and 
$w=a_1\cdots a_m\in\mathcal{W}'$.
Define $M(w,\f)=\bigoplus_{i\in\{0,\ldots,m-1\}} V_i$, where 
$V_i=\bigoplus_{j\in\{0,\ldots,n-1\}} k e_j^{(i)} \simeq k^n$
for all $i$.
Denote by $T_\f: M(w,\f)\to M(w,\f)$ the linear automorphism given by 
$$T_\f\left(e_j^{(i)}\right)=
\begin{cases}
  e_j^{(i-1)} &\mbox{if}\quad i\ge2,\\
  \f\left(e_j^{(i-1)}\right) &\mbox{if}\quad i=1,\\
  e_j^{(m)}   &\mbox{if}\quad i=0,
\end{cases}
$$
where, for each $i$, the map $\f$ is viewed as a map $V_i\to V_i$ in the natural
way. 
Now set
\begin{equation} \label{bandaction}
  Z\cdot e_j^{(i)}=
  \begin{cases}
    T_\f\left(e_j^{(i)}\right)     &\mbox{if}\quad a_i=Z,\\
    T_\f\inv \left(e_j^{(i)}\right) &\mbox{if}\quad a_{i+1}=Z\inv,\\
    0                             &\mbox{otherwise}.
  \end{cases}
\end{equation}
This defines a $\Lambda_0$-modules structure on $M(w,\f)$.
Such a module can be illustrated by a schema in the following way:
\[\xymatrix{V_0\ar_{l_m}@/_1.5pc/[rrrrr]&V_1\ar_{l_1=\varphi}[l]&V_2\ar_{l_2}[l]&\cdots\ar[l] &V_{m-2}\ar_{l_{m-2}}[l]&V_{m-1}\ar_{l_{m-1}}[l]}\]
If $w=XYXY^{-1}X^{-1}YX^{-1}Y^{-1}$ then $M(w,\varphi)$ is given by the following schema:
\[\xymatrix@!0{&&&{V_3}\ar^Y[dr]\ar_X[dl]&&&&\\
&&{V_2}\ar_Y[dl]&&{V_4}\ar^X[dr]&&V_6\ar_Y[dl]\ar^X[dr]&&\\
&V_1\ar_{X=\varphi}[dl]&&&&V_5&&V_7\ar^{Y}@/^/[dlllllll]&\\
V_0&&&&&&&&
}\]

Modules isomorphic to $M(w)$, $w\in\mathcal{W}$ are called \emph{string modules}, while
the ones of the type $M(w,\f)$ for $w\in\mathcal{W}'$ and $\f$ an indecomposable linear automorphism of $k^n$, are called
\emph{band modules}.  

Now, by Ringel's result, every indecomposable $\Lambda_0$-module is isomorphic to either a string
module or a band module. The two classes are mutually disjoint.
Moreover, $M(w)\simeq M(w')$ if, and only if, $w\sim_1 w'$, and $M(w,\f)\simeq M(w',\f')$
if, and only if, $w\sim_2 w'$ and $\f'=\psi\f\psi\inv$ for some $\psi\in\Aut_k(k^n)$. 

A $\Lambda_0$-module $M$ is in $\modu(kD_{4q})$ if, and only if,
$\left((XY)^q+(YX)^q\right)\cdot M=0$. This is clearly the case whenever the Loewy length
of $M$ is strictly less than $2q+1$, moreover, the regular module 
$_{kD_{4q}}kD_{4q}\simeq M((XY)^q(X\inv Y\inv)^q,1_k)$ is the unique indecomposable
projective, and the unique $kD_{4q}$-module with Loewy length equal to $2q+1$.

Throughout this article, the following notation and terminology is used.
The least natural number is $0$.
Given a non-negative real number $x$, $\rned{x}$ denotes the integral part of $x$, \ie,
$\rned{x}=\max\{n\in\N \mid n\le x\}$. 
Let $n\in\N$. The $i$th term in the binary expansion of $n$ is denoted by
$[n]_i$, so $n=\sum_i [n]_i2^i$. Further,
$\nu(n)=\min\{i\in\N \mid [n]_j=0,\,\forall j<i\}= \max\{i\in\N \mid \: 2^i\!\mid\! n\}$.
Given $l,m\in\N$, we write $l\perp m$ to indicate that the binary expansions of $l$ and $m$
are disjoint, that is, $[l]_i+[m]_i\le1$ for all $i\in\N$. All congruences appearing are
modulo two, so $l\equiv m$ always means $2\mid (l-m)$. 
By $\d_{i,j}$ we denote the Kronecker delta.
By a \emph{directed subword} of a word $w\in\mathcal{W}$ we mean a word $w'$ in either
the letters $\{X,Y\}$ or $\{X\inv,Y\inv\}$ such that $w=w_1w'w_2$ for some words 
$w_1,w_2\in\mathcal{W}$. A \emph{directed component} of $w$ is a maximal directed
subword. Clearly, every word in $\mathcal{W}$ can written in a unique way as a product of its
directed components. Moreover, for every word $w\in\mathcal{W}'$ there exists a word
$w'\in\mathcal{W}'$ with an even number of directed components such that $w\sim_2 w'$, and
the directed components of $w'$ are uniquely determined by $w$.
Define words
\begin{equation*}
  A_0=B_0=1, \;\mbox{and} \quad A_{t+1}=B_tY, \quad B_{t+1}=A_tX 
\quad\mbox{for all $t\in\N$.}
\end{equation*}
Then, for every directed word $w\in\mathcal{W}$ of length $t$, either $w\sim_1A_t$ or
$w\sim_1 B_t$ holds.

A basis of a $kD_{4q}$-module $M$ on which the algebra acts according to either of the
formulae (\ref{stringaction}) and (\ref{bandaction}) is called a \emph{standard basis} of
$M$. If $\B_M$ and $\B_N$ are standard bases of modules $M$ respectively $N$, the basis
$\B_M\tp \B_N=\{a\tp b \mid a\in\B_M, b\in\B_N\}$ of $M\tp N$ is called a 
\emph{homogeneous basis}.
We say that an element $a\tp b$ in a homogeneous basis is \emph{pure} if it is annihilated
by either $X$ or $Y$, and \emph{impure} otherwise.
By a \emph{subquotient} of a module $M$ we mean a quotient of a submodule of $M$,
equivalently, a submodule of a quotient of $M$.
The \emph{top} of a module $M$ is the quotient module $M/\rad M$, the \emph{socle},
$\soc M\subset M$, is the maximal semisimple submodule.

The \emph{Loewy length} of a module $M$ is denoted by $\ell(M)$. It is the common length
of the radical series and the socle series of $M$, so it may be computed as 
$\ell(M)=\min\{n\in\N \mid \rad^n(kD_{4q})\cdot M=0\}$.  We repeatedly make use of the fact that if $M,N$ are $kD_{4q}$-modules, and $M'$ is a subquotient of $M$, then $M'\tp N$ is a subquotient of $M\tp N$ and hence $\ell(M'\tp N)\leq \ell(M\tp N).$
The Loewy length of a string module $M(w)$ is $\ell(M)=h+1$, where $h$ denotes the maximal
length of all directed subwords of $w$. 
For band modules, we have $\ell(M(w,\f))=h'+1$, where $h'$ is the maximal length of all
directed subwords of any cyclic permutation of $w$.
For computational purposes, the numbers $h$ respectively $h'$ are often easier to work
with than with the Loewy length, therefore we define $h(M)=\ell(M)-1$ for $M\in\modu(kD_{4q})$. 
If $M$ is a module and $x\in M$, we write $\ell(x)=\ell(\<m\>)$ and similarly 
$h(x)=h(\<x\>)$.

A result which is crucial for the computational parts of this article is Lucas' theorem \cite{Lucas} (see also Exercise~6(a) in Chapter~1 of Stanley's book \cite{Stanleyvol1}).
Stated below for the special case of $p=2$, it will be used throughout the text without
further reference.
\begin{thm}[Lucas' theorem]
For all natural numbers $r$ and $s$, the congruence relation 
$$\binom{r}{s} \equiv \prod_{i\in\N}\binom{[r]_i}{[s]_i}$$
holds.
\end{thm}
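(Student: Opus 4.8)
The plan is to deduce the congruence from a polynomial identity in characteristic two. I would work in the polynomial ring $\mathbb{F}_2[x]$ over the field with two elements. There the Frobenius map is a ring endomorphism, so $(1+x)^2=1+x^2$, and an immediate induction on $i$ gives $(1+x)^{2^i}=1+x^{2^i}$ for every $i\in\N$.

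Writing $r=\sum_i[r]_i2^i$, I would then raise this identity to the power $[r]_i$ and multiply over all $i$ (all but finitely many factors being $1$) to obtain, in $\mathbb{F}_2[x]$,
$$(1+x)^r=\prod_{i\in\N}\left((1+x)^{2^i}\right)^{[r]_i}=\prod_{i\in\N}\left(1+x^{2^i}\right)^{[r]_i}.$$
The coefficient of $x^s$ on the left is the image of $\binom{r}{s}$ in $\mathbb{F}_2$, which is nonzero exactly when $\binom{r}{s}$ is odd. On the right, the $i$th factor equals $1$ when $[r]_i=0$ and $1+x^{2^i}$ when $[r]_i=1$; expanding the product, the monomial $x^s$ occurs if and only if $s=\sum_{i\in S}2^i$ for some $S\subseteq\{i\in\N\mid[r]_i=1\}$, and by uniqueness of binary expansions such an $S$, if it exists, is unique. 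Hence the coefficient of $x^s$ on the right is nonzero precisely when $[s]_i\le[r]_i$ for all $i$. Since $\binom{[r]_i}{[s]_i}$ equals $1$ if $[s]_i\le[r]_i$ and otherwise equals $\binom{0}{1}=0$ (the remaining case forcing $[r]_i=0$, $[s]_i=1$), the integer $\prod_{i\in\N}\binom{[r]_i}{[s]_i}$ is likewise odd precisely when $[s]_i\le[r]_i$ for all $i$. Comparing the two descriptions of the coefficient of $x^s$ then yields the asserted congruence.

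I do not expect any genuine obstacle in this argument; the single point that calls for a moment's care is the coefficient extraction on the right-hand side, where the uniqueness of binary representations is what guarantees that each monomial $x^s$ is produced in at most one way in the expansion of the product. The identical reasoning, with $2$ replaced by an arbitrary prime $p$ and binary expansions replaced by base-$p$ expansions, proves the general form of Lucas' theorem, but only the case $p=2$ is used in what follows.
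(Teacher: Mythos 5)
Your argument is correct and complete. Note, however, that the paper does not prove this statement at all: Lucas' theorem is quoted as a known result, with references to Lucas' original 1878 paper and to Exercise~6(a) in Chapter~1 of Stanley's book, so there is no in-paper proof to compare against. What you give is the standard generating-function proof in characteristic two: the Frobenius identity $(1+x)^{2^i}=1+x^{2^i}$ in $\mathbb{F}_2[x]$, the factorisation $(1+x)^r=\prod_i\bigl(1+x^{2^i}\bigr)^{[r]_i}$, and coefficient extraction at $x^s$, where uniqueness of binary expansions ensures each monomial arises from at most one choice of factors. You correctly identify that the right-hand side $\prod_i\binom{[r]_i}{[s]_i}$ is odd exactly when $[s]_i\le[r]_i$ for all $i$, which matches the left-hand coefficient, so the congruence follows. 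This is exactly the kind of self-contained elementary argument the cited exercise has in mind, and it does cover everything the paper uses (only the case $p=2$ is ever needed in the sequel).
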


The layout of this article is as follows.
In Section~\ref{results}, we state the main results, giving explicit, closed formulae for the Loewy
length of the tensor product of any two modules in the classifying list.
The first result, Proposition~\ref{subquo}, reduces the problem of determining the Loewy
length of a tensor product $M\tp N$ to the case when $M$ and $N$ both have simple top and
simple socle.  This works for modules for arbitrary finite groups.  In Proposition \ref{subquocor} we refine the result in the case of dihedral $2$-groups, showing that if $M$ and $N$ are indecomposable, and $M$ does not have simple top and simple socle, then the Loewy length of $M\tp N$ is the maximum of the Loewy lengths of $M_i\tp N$ where the $M_i$ are uniserial string modules corresponding to the directed components of the word defining $M$.  This reduces the calculation of the Loewy length of any tensor product of modules for $D_{4q}$ to determining the Loewy lengths of tensor products of some explicitly defined modules with simple top and simple socle.  
Thereafter, formulae for the Loewy length of a tensor product of such modules are given in
Theorem~\ref{mainthm}.
The proof of Propositions~\ref{subquo} and \ref{subquocor} are relatively short, and given in a few steps
in Section~\ref{results}.
As for Theorem~\ref{mainthm}, its proof occupies the remaining part of the article.
Sections~\ref{quiversetup}--\ref{soluniser} treat tensor products of string modules, while 
in Section~\ref{non-uniserial}, the Loewy lengths of products involving band modules are
computed. 
The basic setup of the problem for string modules is given in Section~\ref{quiversetup}.
In Section~\ref{bdlma} we prove an important auxiliary result, Proposition~\ref{backdiag},
which paves the way for proof of Theorem~\ref{mainthm}:1 in Section~\ref{soluniser}.
Finally, the formulae involving bands with simple top and simple socle are proved in
Section~\ref{non-uniserial}, mainly using the result for string modules from
Section~\ref{soluniser}.

\section{Results} \label{results}

Let $\Lambda$ be an algebra over a field $K$, and $M$ a $\Lambda$-module. 
Denote by $\pi:M\to M/\rad M$ the canonical projection.
Since the top of any module is semi-simple, there exists a basis $\tilde{\B}$ of 
$M/\rad M$ such that $\<\bar{b}\>\subset M/\rad M$ is simple for each
$\bar{b}\in\tilde{\B}$. 
Choose a set $\B'\subseteq M$ of coset representatives for the elements in $\tilde{\B}$;
these are now a set of linearly independent in $M$, and $M=\<\B'\>=\sum_{b\in\B'}\<b\>$.
Moreover, each of the submodules $\<b\>$, $b\in\B'$, has simple top $\<\pi(b)\>$.
Now $\B'$ can be extended to a basis $\B$ of $M$ such that each $b\in\B$ is contained in
$\<b'\>$ for some $b'\in\B'$.
The basis $\B$ now has the following properties:
\begin{enumerate}
\item it contains a subset $\B'$ such that the elements $\pi(b)$, $b\in\B'$ form a basis
  of $M/\rad M$, each element of which generates a simple submodule;
\item each element of $\B$ is contained in $\<b'\>$ for some $b'\in\B'$. 
\end{enumerate}
A basis satisfying these properties shall be called a \emph{good basis} of $M$.
A subset $\B'$ of $M$ satisfying the first condition is called a \emph{top basis} of $M$.
The preceding construction shows that good bases always exist, and that every top basis can
be extended to a good basis. 

\begin{lma}\label{gbasesexist}
Every $\Lambda$-module has a good basis. 
\end{lma}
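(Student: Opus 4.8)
The plan is essentially to turn the discussion immediately preceding the statement into a formal proof, since the paragraph before the lemma already carries out the construction; my job is just to organise it cleanly as a proof and fill in the two small gaps that are glossed over (linear independence of the coset representatives, and the fact that the extension can be performed respecting condition~(2)).

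First I would fix a $\Lambda$-module $M$ and let $\pi\colon M\to M/\rad M$ be the canonical surjection. Since $M/\rad M$ is semisimple, it decomposes as a direct sum of simple submodules; picking a nonzero vector from each simple summand (or, more precisely, a basis of each simple summand adapted to that direct sum decomposition) yields a basis $\tilde\B$ of $M/\rad M$ with the property that $\<\bar b\>$ is simple for every $\bar b\in\tilde\B$. For each $\bar b\in\tilde\B$ choose a preimage $b\in M$ under $\pi$, and let $\B'$ be the resulting set. The first point needing an argument is that $\B'$ is linearly independent in $M$: if $\sum \lambda_b b=0$ then applying $\pi$ gives $\sum\lambda_b\bar b=0$, and since $\tilde\B$ is a basis all $\lambda_b$ vanish. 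The same computation shows $\pi$ restricts to a bijection $\B'\to\tilde\B$, so the elements $\pi(b)$, $b\in\B'$, form a basis of $M/\rad M$ each generating a simple submodule; this is condition~(1). Moreover $\<\B'\>+\rad M=M$, and since $\rad M$ is a superfluous submodule (by Nakayama, $M$ being finite-dimensional hence finitely generated) we get $\<\B'\>=M$, i.e.\ $M=\sum_{b\in\B'}\<b\>$, and each $\<b\>$ has simple top $\<\pi(b)\>$.

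It remains to extend $\B'$ to a basis $\B$ of $M$ satisfying condition~(2), namely that every element of $\B$ lies in some $\<b'\>$, $b'\in\B'$. For this I would argue as follows: since $M=\sum_{b'\in\B'}\<b'\>$, one may order $\B'=\{b'_1,\dots,b'_r\}$ and successively choose, for each $j$, a basis $C_j$ of a complement of $\<b'_1\>+\dots+\<b'_{j-1}\>$ inside $\<b'_1\>+\dots+\<b'_j\>$, consisting of vectors lying in $\<b'_j\>$ (possible because $\<b'_j\>$ surjects onto that quotient space). Taking $b'_j\in C_j$ we may assume $\B':=\bigcup_j C_j\supseteq\{b'_1,\dots,b'_r\}$ after relabelling, or more simply observe that $\bigcup_j C_j$ is a basis of $M$ each of whose elements lies in one of the $\<b'_j\>$, and it contains the original $\B'$ provided we arranged $b'_j\in C_j$ at each stage (which we can, since $b'_j$ is nonzero modulo the earlier sum). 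Setting $\B=\bigcup_j C_j$ then gives a basis with properties~(1) and~(2), which is by definition a good basis, and we are done.

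I do not expect any serious obstacle here; the statement is a soft linear-algebra fact and the construction is spelled out in the text. The only mildly delicate point is making the inductive extension in the last paragraph respect condition~(2) while still containing the prescribed top basis $\B'$ — one has to be a little careful that the complements are chosen inside the cyclic submodules $\<b'\>$ rather than arbitrarily — but this is routine and adds only a sentence or two. (As the surrounding text remarks, the same argument in fact proves the stronger statement that \emph{every} top basis extends to a good basis, which is what is actually used later.)
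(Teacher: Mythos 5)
Your construction is essentially the one the paper itself uses (the lemma's proof in the paper is just the paragraph preceding it), and the first half — lifting a basis of the semisimple top, linear independence via $\pi$, and $M=\sum_{b'\in\B'}\<b'\>$ by Nakayama — is fine. The problem is in your final extension step: the parenthetical claim that ``$b'_j$ is nonzero modulo the earlier sum'' is false in general. Since $\<b'_i\>$ is a \emph{module}-theoretic cyclic submodule, it can already swallow later elements of $\B'$: if some simple direct summand of $M/\rad M$ has $K$-dimension at least $2$, two elements of $\B'$ may map into the same simple summand, and then $\<b'_1\>$ can contain $b'_2$. Concretely, for $\Lambda=M_2(K)$ and $M$ the simple $2$-dimensional module with $\B'=\{e_1,e_2\}$ one has $\<e_1\>=M\ni e_2$, so at step $j=2$ your complement is zero and $e_2$ cannot be put into $C_2$; no reordering helps. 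Hence the basis $\bigcup_j C_j$ need not contain $\B'$, and your verification of condition~(1) — as well as your closing remark that the same argument gives the stronger statement that \emph{every} top basis extends to a good basis — does not follow from the inductive construction as written.

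The gap is repairable in two ways. The route the paper takes is the Steinitz exchange: $\B'$ is linearly independent and is contained in the spanning set $\bigcup_{b'\in\B'}\<b'\>$, so it extends to a basis $\B$ of $M$ with $\B'\subseteq\B\subseteq\bigcup_{b'}\<b'\>$; this needs no ordering or complements and also yields the stronger ``every top basis extends'' statement. Alternatively, your $\bigcup_j C_j$ does happen to be a good basis even when it misses $\B'$, but this requires an extra argument you did not give: since it is a basis of $M$, its image under $\pi$ spans $M/\rad M$, so one can extract a subset whose images form a basis, and each such nonzero image lies in $\pi(\<b'_j\>)=\<\pi(b'_j)\>$, a simple module, hence generates a simple submodule — which is condition~(1).
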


Observe that in a $kD_{4q}$-module, a standard basis is a good basis, while a homogeneous
basis of a tensor product $M\tp N$ in general is not. 

\begin{lma}\label{lem:Basisradical}
If $\Lambda$ is an Artin algebra, $M\in\modu(\Lambda)$ and $X\subset M$ a set of generators
of $M$, then 
\[\ell(M)=\max_{x\in X}\ell(x).\]
\end{lma}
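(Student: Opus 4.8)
The plan is to deduce this from the characterisation $\ell(M)=\min\{n\in\N \mid \rad^n(\Lambda)\cdot M=0\}$, using the fact that the radical of an Artin algebra is nilpotent, so this minimum is finite and the two sides of the claimed equality are genuine natural numbers. Write $X=\{x_1,\dots,x_r\}$ for the (finite, by Noetherianity) set of generators; then $M=\sum_{i}\langle x_i\rangle=\sum_i \Lambda x_i$, and I will show that each side of the asserted identity dominates the other.

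First I would prove the inequality $\ell(M)\ge\max_{x\in X}\ell(x)$. For any $x\in X$ the submodule $\langle x\rangle$ is a subquotient of $M$ (indeed a submodule), so applying $\rad^n(\Lambda)$ to the inclusion $\langle x\rangle\hookrightarrow M$ shows $\rad^n(\Lambda)\langle x\rangle\subseteq\rad^n(\Lambda)M$; hence if $\rad^n(\Lambda)M=0$ then $\rad^n(\Lambda)\langle x\rangle=0$, so $\ell(x)\le\ell(M)$. Taking the maximum over $x\in X$ gives the first inequality. (Equivalently, this is the submodule case of the subquotient monotonicity of Loewy length already recalled in the text.)

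Next I would prove the reverse inequality $\ell(M)\le\max_{x\in X}\ell(x)$. Set $n=\max_{x\in X}\ell(x)$, so that $\rad^n(\Lambda)\langle x\rangle=\rad^n(\Lambda)\Lambda x=0$ for every $x\in X$. Then for an arbitrary element $m\in M$, write $m=\sum_{i}\lambda_i x_i$ with $\lambda_i\in\Lambda$; for any $a\in\rad^n(\Lambda)$ we have $a\lambda_i\in\rad^n(\Lambda)$ as well (since $\rad^n(\Lambda)$ is a two-sided ideal), whence $a\lambda_i x_i\in\rad^n(\Lambda)x_i=0$, and therefore $am=\sum_i a\lambda_i x_i=0$. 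Thus $\rad^n(\Lambda)M=0$, which by the radical-series description of Loewy length yields $\ell(M)\le n$. Combining the two inequalities gives the claim.

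No step here is a serious obstacle; the only point that needs care is making sure the quantities are well-defined, i.e. invoking that $\rad(\Lambda)$ is nilpotent for an Artin algebra so that all Loewy lengths in sight are finite natural numbers, and that a finitely generated module over a Noetherian (in particular Artin) algebra has a finite generating set so that the maximum is attained. If one prefers to allow infinite generating sets $X$, the argument is unchanged provided one interprets $\max_{x\in X}\ell(x)$ as a supremum; it is automatically attained because $\ell$ takes values in the finite set $\{0,1,\dots,\ell(\Lambda)\}$.
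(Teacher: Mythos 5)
Your proof is correct and follows essentially the same route as the paper: the inequality $\ell(M)\ge\max_{x\in X}\ell(x)$ is immediate from submodule monotonicity, and the reverse inequality is obtained by writing an arbitrary $m\in M$ as a sum of elements of the cyclic submodules $\<x\>$ and applying $\rad^n(\Lambda)$ termwise. The extra remarks on finiteness of the generating set and of the Loewy lengths are harmless but not needed, since the argument works verbatim with a supremum.
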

\begin{proof}
Let $r=\max_{x\in X}\ell(x)$.
The inequality $\ell(M)\ge r$ is immediate.
On the other hand, any element $m\in M$ can be written as $m=\sum_{x\in X}m_x$ for some
$m_x\in\<x\>$, so $(\rad^r\Lambda) m\subset\sum_{x\in X}(\rad^r\Lambda) m_x=0$.
Hence $r\le \ell(M)$.
\end{proof}

Let $M$ and $N$ be modules, and let $\B_M\subseteq M$ and $\B_N\subseteq N$ be good bases,
with top bases $\B'_M\subseteq\B_M$ and $\B'_N\subseteq\B_N$ respectively.
By Lemma~\ref{lem:Basisradical}, 
$\ell(M\otimes N)=\max \{ \ell(\<a\otimes b\>) \mid (a,b)\in\B_M\times\B_N \}$.
However,
$\<a\otimes b\>\subseteq\<a\>\otimes \< b\>\subseteq \<a'\>\tp\<b'\>\subseteq M\otimes N$
for some $a'\in\B'_M$, $b'\in\B'_N$, so
\begin{align*}
\ell(M\otimes N)=&\max\{ \ell(\< a\otimes b\>) \mid (a,b)\in\B_M\times\B_N\} \le
\max\{ \ell(\<a\>\otimes\<b\>) \mid (a,b)\in\B_M\times\B_N \}  \\
\le&\max\{ \ell(\<a'\>\otimes\<b'\>) \mid (a',b')\in\B'_M\times\B'_N \} \le \ell(M\otimes N) \\
\intertext{\ie,}
 \ell(M\tp N) =& \max\{ \ell(\<a'\>\otimes\<b'\>) \mid (a',b')\in\B'_M\times\B'_N\}.
\end{align*}

\begin{prop}\label{subquo}
If $K$ is a field, $G$ a finite group and $M,N\in\modu KG$, then $\ell(M\otimes N)$ is the
maximum of $\ell(A\otimes B)$ where $A,B$ are subquotients of $M$ and $N$ respectively with
simple top and simple socle.
\end{prop}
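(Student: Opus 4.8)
The plan is to establish the statement through two inequalities, using the reduction to top bases derived immediately before the proposition.

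\textbf{The easy inequality.} First I would observe that if $A$ and $B$ are subquotients of $M$ and $N$ respectively, then $A\otimes B$ is a subquotient of $M\otimes N$. Indeed, tensoring with a fixed module is exact (we are over a field, so everything is flat, and submodules and quotients are preserved), so a submodule of a quotient of $M$, tensored with a submodule of a quotient of $N$, yields a submodule of a quotient of $M\otimes N$. Since the Loewy length cannot increase when passing to a subquotient (this is the fact recorded in the introduction, $\ell(M'\otimes N)\le\ell(M\otimes N)$, applied twice), we get $\ell(A\otimes B)\le\ell(M\otimes N)$ for all such $A,B$. Hence the maximum over such pairs is at most $\ell(M\otimes N)$.

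\textbf{The other inequality.} For the reverse direction I would use the computation carried out just before the proposition statement. Choosing good bases $\B_M\subseteq M$, $\B_N\subseteq N$ with top bases $\B'_M$, $\B'_N$, we have
\[
\ell(M\otimes N)=\max\{\ell(\<a'\>\otimes\<b'\>)\mid (a',b')\in\B'_M\times\B'_N\}.
\]
So it suffices to exhibit, for each pair $(a',b')$, subquotients $A$ of $M$ and $B$ of $N$, each with simple top and simple socle, such that $\<a'\>\otimes\<b'\>$ is a subquotient of $A\otimes B$; then $\ell(\<a'\>\otimes\<b'\>)\le\ell(A\otimes B)$ and we are done. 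The natural candidates are $A=\<a'\>$ and $B=\<b'\>$. By construction of a good basis, $\pi_M(a')$ generates a \emph{simple} submodule of $M/\rad M$, so $\<a'\>$ has simple top. For the socle, if $\soc\<a'\>$ is not simple I would instead pass to a quotient: pick any simple submodule $S\subseteq\soc\<a'\>$ that is hit by the surjection $\<a'\>\otimes\<b'\>\twoheadrightarrow$ (its image needs care — see below), or more robustly, replace $\<a'\>$ by a suitable subquotient with simple top and simple socle that still admits $\<a'\>\otimes\<b'\>$ as a subquotient after tensoring.

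\textbf{The main obstacle.} The delicate point is precisely this last reduction to simple socle. Having simple top is automatic from the good-basis construction, but $\<a'\>$ need not have simple socle. The fix I anticipate: the module $\<a'\>$ is cyclic with simple top, generated by $a'$; its socle may have several simple summands, but one can choose a submodule $U\subseteq\<a'\>$ such that the quotient $\<a'\>/U$ has simple socle and the composite $a'\mapsto a'+U$ still generates $\<a'\>/U$ — taking $U$ to be a complement-free choice inside the socle won't work directly since $\<a'\>$ need not be a direct sum, but one can take $U$ to be any submodule maximal with respect to $\ell(\<a'\>/U)=\ell(\<a'\>)$ and $\soc$ simple; such $U$ exists because one may successively quotient out simple socle summands not meeting a chosen longest chain. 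Doing the same for $b'$ gives $A,B$ with simple top and simple socle, and $A\otimes B$ is a quotient of $\<a'\>\otimes\<b'\>$... but I want it the \emph{other} way, $\<a'\>\otimes\<b'\>$ a subquotient of $A\otimes B$. So instead I would keep $A=\<a'\>,B=\<b'\>$ for the top condition and argue that among all subquotients with simple top and simple socle, there is one achieving Loewy length $\ell(M\otimes N)$: take a chain in $M\otimes N$ of length $\ell(M\otimes N)$, trace it back through the surjection from $\<a'\>\otimes\<b'\>$ for the relevant $(a',b')$, and cut $\<a'\>$ and $\<b'\>$ down to the uniserial-type pieces supporting that chain. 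Making this tracing precise — showing the chain can be realised inside $A'\otimes B'$ for $A',B'$ honest subquotients with simple socle and simple top — is the technical heart of the argument, and is presumably where the authors do the real work.
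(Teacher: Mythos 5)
Your first inequality and the reduction to $\ell(M\tp N)=\max\{\ell(\<a'\>\tp\<b'\>)\}$ over top-basis elements are fine and match the paper's setup, but the step you flag as ``the technical heart'' is a genuine gap, and the fixes you sketch do not close it. Quotienting $\<a'\>$ by part of its socle goes the wrong way (you would need $\<a'\>\tp\<b'\>$ to sit inside $A\tp B$, not the reverse), and choosing $U$ so that $\ell(\<a'\>/U)=\ell(\<a'\>)$ controls the wrong quantity: what must be preserved is $\ell(\<a'\>\tp\<b'\>)$, and a quotient of $\<a'\>$ with the same Loewy length can still give a shorter tensor product. The ``trace a chain back through the surjection'' idea is also not a well-defined construction as stated ($\<a'\>\tp\<b'\>$ is a submodule of $M\tp N$, not the source of a natural surjection), so the second inequality is not proved.

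The paper's resolution is a duality trick, not a chain-tracing argument. Having used the good-basis reduction once to assume $M$ and $N$ have simple top (replace $M,N$ by the submodules $\<a'\>,\<b'\>$), one uses that for finite group algebras $\ell(M\tp N)=\ell((M\tp N)^*)=\ell(M^*\tp N^*)$. Now $M^*$ and $N^*$ have simple socle, and applying the same top-basis argument to them gives $\ell(M^*\tp N^*)=\max\ell(U\tp V)$ over submodules $U\subset M^*$, $V\subset N^*$ with simple top; being submodules of modules with simple socle, $U$ and $V$ automatically also have simple socle. Dualizing back, $U^*$ and $V^*$ are quotients of $M$ and $N$ (hence subquotients of the original modules), they have simple top and simple socle, and $\ell(U\tp V)=\ell(U^*\tp V^*)$. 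This yields exactly the missing inequality; without some such device (duality swapping top and socle), the simple-socle condition does not come out of the good-basis construction alone.
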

\begin{proof}
By the preceding discussion and Lemma~\ref{gbasesexist}, we may assume $M$ and $N$ have
simple top. 
On the other hand, $\ell(M\otimes N)=\ell((M\otimes N)^*)=\ell(M^*\otimes N^*)$ and, again by
applying the argument preceding this lemma, $\ell(M^*\tp N^*)$ is the maximum of $\ell(U\tp V)$,
where 
$U\subset M^*$, $V\subset N^*$ run though all submodules with simple top. Since $M^*$,
$N^*$ have simple socle, so have $U$, $V$.
Now $\ell(U\tp V)=\ell(U^*\tp V^*)$, and $U^*$ and $V^*$ are subquotients of $M$ and $N$
respectively, with simple top and simple socle. 
\end{proof}

The $\Lambda_0$-modules with simple top and simple socle are precisely the uniserial
string modules, that is, those isomorphic to $M(A_t),M(B_t)$ for $t\in\N$, and the band
modules isomorphic to $M(A_lB_m\inv,\rho)$ for $l,m\in\N,\rho\in k\minus\{0\}$.

While Proposition~\ref{subquo} is valid for all finite group algebras, we can do slightly
better with $kD_{4q}$.

\begin{prop}  \label{subquocor}
Let $N\in\modu(kD_{4q})$. 
  \begin{enumerate}
  \item If $w\in\mathcal{W}$ is a word with directed components $w_i$,
    $i\in\{1,\ldots,m\}$, and $M=M(w)$, then 
    $$\ell(M\tp N)=\max\{\ell(M(w_i)\tp N) \mid i\in\{1,\ldots,m\} \}.$$
  \item Let $m$ and $n$ be positive integers, $w\in\mathcal{W}'$ is a word with directed
    components $w_i$, $i\in\{1,\ldots,
    2m\}$, $\f$ an indecomposable automorphism of $k^n$, and $M=M(w,\f)$.
    If $m$ and $n$ are not both equal to $1$, then
    $$\ell(M(w,\f)\tp N)=\max\{\ell(M(w_i)\tp N) \mid i\in\{1,\ldots,2m\}
    \}.$$
  \end{enumerate}
\end{prop}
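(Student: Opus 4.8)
The plan is to leverage Proposition~\ref{subquo} together with the structure theory of string and band modules over $kD_{4q}$. By Proposition~\ref{subquo}, $\ell(M\tp N)$ is the maximum of $\ell(A\tp B)$ over subquotients $A$ of $M$ and $B$ of $N$ with simple top and simple socle; equivalently, since we only need an upper bound matched by an obvious lower bound, it suffices to understand the uniserial subquotients of $M=M(w)$ and of $M=M(w,\f)$. The key observation is that a directed subword $w'$ of $w$ (in the letters $\{X,Y\}$ or $\{X\inv,Y\inv\}$) gives rise to a uniserial subquotient $M(w')$ of $M(w)$, and every uniserial subquotient of $M(w)$ arises this way, up to isomorphism. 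Moreover, each such $M(w')$ is itself a subquotient of $M(w_i)$ for the directed component $w_i$ containing $w'$, because $w'$ is a directed subword of $w_i$. Hence for part~1, combining Proposition~\ref{subquo} applied to $M\tp N$ with $M(w_i)$ being itself a subquotient of $M=M(w)$ (so that $\ell(M(w_i)\tp N)\le\ell(M\tp N)$ by the subquotient monotonicity recalled in the introduction), gives both inequalities at once.

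More precisely, first I would record the easy inequality $\ell(M(w_i)\tp N)\le\ell(M(w)\tp N)$ for each $i$, which holds because each $M(w_i)$ is a subquotient of $M(w)$: one obtains $M(w_i)$ by first passing to the submodule generated by the basis vectors $e_j$ for $j$ in the segment of $w$ corresponding to $w_i$ together with its neighbours, then taking an appropriate quotient, exactly as in the schema picture. For the reverse inequality, apply Proposition~\ref{subquo}: $\ell(M(w)\tp N)=\max\ell(A\tp B)$ with $A$ a subquotient of $M(w)$ having simple top and simple socle. Since $M(w)$ is a string module, its subquotients are direct sums of string modules, and those with simple top and simple socle are uniserial, hence of the form $M(w')$ for a directed subword $w'$ of $w$. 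Such $w'$ lies inside a unique directed component $w_i$, so $A=M(w')$ is a subquotient of $M(w_i)$, whence $\ell(A\tp B)\le\ell(M(w_i)\tp N)$ by subquotient monotonicity applied in the second variable as well (taking $B$ to be a subquotient of $N$, and using $\ell(A\tp B)\le\ell(M(w_i)\tp N)$ since $A\tp B$ is a subquotient of $M(w_i)\tp N$). Taking the maximum over $i$ finishes part~1.

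For part~2, the point is that when $(m,n)\ne(1,1)$, the band module $M=M(w,\f)$ still has all of its uniserial subquotients of the form $M(w')$ where $w'$ is a directed subword of some cyclic permutation of $w$ — and every such $w'$ is a directed subword of one of the $2m$ directed components $w_i$ of (a suitable representative of) $w$, once we note that cyclic permutation of $w$ permutes the directed components cyclically, possibly splitting and recombining the two components adjacent to the cut point, but never producing a directed subword longer than one already contained in some $w_i$ (this uses that $w\in\mathcal{W}'$ has an even number $2m$ of directed components and, crucially, that there are at least two of them, i.e.\ $m\ge1$, and that the ``wrap-around'' does not create a longer directed run when $n$ copies are glued cyclically — here is where the hypothesis $(m,n)\ne(1,1)$ enters: if $m=n=1$ the band $M(w,\f)$ with $w=A_lB_m\inv$ type behaviour could wrap a directed component around, but with $2m\ge2$ directed components separated by changes of direction, no wrap-around lengthens any directed run). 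Given this, the same two-sided argument as in part~1 applies: each $M(w_i)$ is a subquotient of $M(w,\f)$ (take $\f$-isotypic pieces and restrict to the relevant segment), giving $\ell(M(w_i)\tp N)\le\ell(M(w,\f)\tp N)$; and conversely, every uniserial-top-and-socle subquotient $A$ of $M(w,\f)$ is a subquotient of some $M(w_i)$, so Proposition~\ref{subquo} yields $\ell(M(w,\f)\tp N)\le\max_i\ell(M(w_i)\tp N)$.

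The main obstacle I anticipate is the careful bookkeeping in part~2 around which directed subwords appear in the band module and why the hypothesis $(m,n)\ne(1,1)$ is exactly what prevents a uniserial subquotient whose length exceeds $\max_i h(M(w_i))+1$. Concretely, one must verify that although $M(w,\f)$ built from $n$ ``copies'' of $w$ glued cyclically via $T_\f$ can have directed runs spanning a copy boundary, the fact that $w$ (taken with an even number of directed components) both begins and ends with letters forcing a direction change at the glue point means no such run is longer than the longest directed component $w_i$ — unless $m=n=1$, in which case $M(w,\f)$ is the unique indecomposable projective (Loewy length $2q+1$, attained across a boundary) and the formula genuinely fails. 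I would isolate this as a short combinatorial lemma on words and their cyclic permutations, stating that for $w\in\mathcal{W}'$ with $2m\ge2$ directed components, every directed subword of every cyclic permutation of $w$ is a directed subword of some directed component of $w$; then the rest of part~2 is formally identical to part~1. A subsidiary point to check is that the ``simple top and simple socle'' subquotients of a band module are exactly the uniserial string subquotients (no band subquotient has simple top), which follows from the classification since any indecomposable band module has Loewy length determined by a directed subword of a cyclic permutation of its word but is never uniserial.
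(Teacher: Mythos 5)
Your lower-bound argument for part~1 matches the paper's, but both remaining halves of your proposal have genuine gaps. For the upper bound you route everything through Proposition~\ref{subquo} and then claim that every subquotient of a string module with simple top and simple socle is uniserial, and (in your ``subsidiary point'') that no band module has simple top and simple socle. Both claims are false: as the paper notes immediately after Proposition~\ref{subquo}, the modules with simple top and simple socle are the uniserial strings \emph{and} the band modules $M(A_lB_{l'}\inv,\rho)$, and such bands do occur as subquotients of string modules --- indeed $M(A_lB_{l'}\inv,\rho)$ is precisely the quotient of the string module $M(A_lB_{l'}\inv)=\<u_a+u_b\>$ by $\<A_lu_a+\rho B_{l'}u_b\>$, the presentation used throughout Section~\ref{non-uniserial}. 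So the maximum in Proposition~\ref{subquo} runs over band subquotients that your reduction silently discards, and bounding $\ell\left(M(A_lB_{l'}\inv,\rho)\tp B\right)$ by $\max_i\ell(M(w_i)\tp N)$ is essentially Theorem~\ref{mainthm}:2 / Proposition~\ref{withuniserial}, which are proved \emph{later using} the present proposition --- your route is circular as it stands. The paper's upper bound needs none of this: $M$ is itself a subquotient of $\bigoplus_iM(w_i)$ (resp.\ of $\left(\bigoplus_iM(w_i)\right)^{\oplus n}$ in the band case), so $\ell(M\tp N)\le\max_i\ell(M(w_i)\tp N)$ follows directly from monotonicity of Loewy length under subquotients, with no appeal to Proposition~\ref{subquo} and no classification of subquotients.

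Second, you locate the hypothesis $(m,n)\ne(1,1)$ in the wrong half of the proof. It is not about preventing long ``wrap-around'' directed runs: with the representative of $w$ having an even number of directed components, cyclically adjacent components already have opposite directions, so no directed subword of a cyclic permutation crosses the glue point; and $M(w,\f)$ is a subquotient of $\left(\bigoplus_iM(w_i)\right)^{\oplus n}$ even when $m=n=1$. Where the hypothesis is genuinely needed is the lower bound, namely that each $M(w_i)$ is a subquotient of $M(w,\f)$. Your parenthetical justification (``take $\f$-isotypic pieces and restrict to the relevant segment'') makes no use of $(m,n)\ne(1,1)$ and would apply verbatim to $m=n=1$, where the claim fails: for example, with $l\perp m$ and $l\perp(m-1)$, Theorem~\ref{mainthm}:2 gives $\ell\left(M(A_lB_l\inv,1)\tp M(A_m)\right)=2+(l-1)\#m<1+l+m=\ell\left(M(A_l)\tp M(A_m)\right)$, so $M(A_l)$ cannot be a subquotient of $M(A_lB_l\inv,1)$. (Your description of the excluded case $m=n=1$ as ``the unique indecomposable projective'' is also off: those excluded modules are exactly the bands with simple top and simple socle, which is why the reduction stops there.) To repair the proof you should replace the subquotient classification by the direct observation that $M$ is a subquotient of the sum of the $M(w_i)$, and give an actual argument, using $(m,n)\ne(1,1)$, that each $M(w_i)$ is a subquotient of $M(w,\f)$.
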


Note that if $m=n=1$ in the second statement above, then $M$ itself has simple top and
simple socle. 

\begin{proof}
In both cases of the proposition, under the respective assumptions, the modules $M(w_i)$ are
subquotients of $M$. Hence $\ell(M\tp N)\ge \max\{M(w_i)\tp N\}_i$.
On the other hand, $M$ itself is a subquotient, in the first case of 
$\bigoplus_{i} M(w_i)$, and in the second case of 
$\left(\bigoplus_{i} M(w_i)\right)^{\oplus n}$. Either way, it follows that $\ell(M\tp
N)\le \max\{\ell(M(w_i)\tp N)\}_i$, proving the assertion.
\end{proof}

\begin{rmk} \label{projquo}
  Let $G$ be a $p$-group and $K$ a field of characteristic $p$. Then the regular module
  $_{KG}KG$ is indecomposable, and a $M\in \modu KG$ contains a projective direct summand
  precisely when $\ell(M)=\ell(_{KG}KG)$. From Proposition~\ref{subquo} follows that for
  $M,N\in\modu KG$, the tensor product $M\tp N$ has a projective direct summand if, and
  only if, there exist subquotients $A$ and $B$, of $M$ and $N$ respectively, with simple
  top and simple socle, such that $A\tp B$ has a projective direct summand.
  In the case of $KG=kD_{4q}$, Proposition~\ref{subquocor} specifies precisely
  which subquotients $A$ and $B$ need to be considered. 
\end{rmk}

Given $l,m\in\N$, let $s\in\N$ be the smallest number such that $[l]_r+[m]_r\le1$ for
all $r\ge s$, and $\l=\sum_{i\ge s}[l]_i$, $\mu=\sum_{i\ge s}[m]_i$.
Define a binary operation on $\N$ by $l\#m=\l+\mu+2^s-1$.
Observe that $l,m\le l\#m\le l+m$, with $l\#m=l+m$ if, and only if $s=0$, that is, $l\perp m$. 

\begin{ex}
The binary expansions of $146$ and $1304$ are $146=2+2^4+2^7$ and  $1304=2^3+2^4+2^8+2^{10}$.  Thus, $146\nperp 1304$, and in this case $s=5$. It follows that $146\#1304 = 2^{10}+2^8+2^7+(2^5-1) = 1439$.
\end{ex}

We are now ready to state the main theorem of this article, which gives the Loewy lengths
of tensor products of modules with simple top and simple socle. The remaining sections are
dedicated to the proof of this theorem. 

\begin{thm} \label{mainthm}
Let $l,m\in\N$, $l_1,l_2,m_1,m_2\in\N\minus\{0\}$, $\rho,\s\in k\minus\{0\}$. 
\begin{enumerate}
\item \emph{String with string:}
  \begin{align*}
    \ell(M(A_l)\tp M(B_m))&=
    \begin{cases}
      1+l\#m = 1+l+m &\mbox{if}\quad l\perp m, \\
      2+l\#m     &\mbox{if}\quad l\not\perp m,
    \end{cases} \\
    \ell(M(A_l)\tp M(A_m)) &=
    \begin{cases}
      1+l\#m 
      &\mbox{if $[l]_t=[m]_t=0$ for all $0\le t<s-1$},\\    
      2+l\#m &\mbox{otherwise}.
    \end{cases}\\
\intertext{where $s=\min\{r\in\N\mid [l]_t+[m]_t\le1,\:\forall t\ge r\}$.} 
  \end{align*}
\item \emph{Band with string:}
  $$  \ell\left(M(A_{l_1}B_{l_2}\inv,\rho)\tp M(A_m)\right) =
\begin{cases}
  2+(l_1-1)\#m & \mbox{if $\rho=1$, $l_1=l_2$ and} \\
               &\hspace{2ex}\mbox{$l_1\perp m$, $l_1\perp(m-1)$,} \\
  \ell\left(M\left(A_{l_1}B_{l_2}\inv \right)\tp M(A_m)\right) & \mbox{otherwise.}
\end{cases}
$$
\item \emph{Band with band:} 
Let $M=M\left(A_{l_1}B_{l_2}\inv,\rho\right)$, $N=M\left(A_{m_1}B_{m_2}\inv,\s\right)$.
\vspace{1ex}
  \begin{enumerate}
  \item 
    If $l_1\ne l_2$, then 
    $$\ell(M\tp N)=\ell(M\left(A_{l_1}B_{l_2}\inv\right)\tp N).$$
  \end{enumerate}
Assume $l_1=l_2$, $m_1=m_2$.
\vspace{1ex}
\begin{enumerate}\addtocounter{enumii}{1}
\item 
  If $l_1\nperp m_1$, $l_1\nperp(m_1-1)$, $(l_1-1)\nperp m_1$ then 
  $$\ell(M\tp N)= 2+(l_1-1)\#(m_1-1)=2+l_1\#m_1.$$
\item 
  If $l_1\perp m_1$, $(l_1-1)\perp m_1$, then
  $$\ell(M\tp N)=
  \begin{cases}
    2+(l_1-1)\#(m_1-1) &\mbox{if}\quad \s=1,\\
    l_1+m_1+1 &\mbox{otherwise.}
  \end{cases}$$
\item 
  If $l_1\perp m_1$, $l_1\perp(m_1-1)$, then
  $$\ell(M\tp N)=
  \begin{cases}
    2+(l_1-1)\#(m_1-1) &\mbox{if}\quad \rho=1,\\
    l_1+m_1+1 &\mbox{otherwise.}
  \end{cases}$$
\item 
  If $(l_1-1)\perp m_1$, $l_1\perp(m_1-1)$, then
  $$\ell(M\tp N)=
  \begin{cases}
    2+(l_1-1)\#(m_1-1) &\mbox{if}\quad \rho=\s=1,\\
    l_1+m_1            &\mbox{if}\quad \rho=\s\ne1,\\
    l_1+m_1+1 &\mbox{otherwise.}
  \end{cases}$$
  \end{enumerate}
\end{enumerate}
\end{thm}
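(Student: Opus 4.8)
The plan is to reduce everything to a concrete, combinatorial computation on standard and homogeneous bases, and then to carry that computation out by induction on the binary structure of the indices. The first observation is that all the modules in the statement are explicit: $M(A_t)$ and $M(B_t)$ are uniserial strings, and the relevant band modules $M(A_{l_1}B_{l_2}^{-1},\rho)$ have a transparent standard basis. For the tensor product $M\tp N$ one works with the homogeneous basis $\B_M\tp\B_N$, and by Lemma~\ref{lem:Basisradical} the Loewy length is $\max_{a\in\B_M,b\in\B_N}\ell(a\tp b)$. So the task is: for each homogeneous basis vector $a\tp b$, compute the length of the cyclic module $\langle a\tp b\rangle$, and then maximise. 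The quantity $h(a\tp b)=\ell(a\tp b)-1$ counts, roughly, the longest alternating string of applications of $X$ and $Y$ that does not kill $a\tp b$; this is where the arithmetic function $l\#m$ enters. Indeed $l\#m$ is precisely designed so that $1+l\#m$ is the generic answer: if one applies $X$ then $Y$ then $X\dots$ to a product, the coefficients that appear are binomial coefficients $\binom{\cdot}{\cdot}$ modulo $2$, and by Lucas' theorem these vanish exactly when a carry occurs in binary addition — the number $s$ in the definition of $\#$ records where the carries start.

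Concretely, I would first set up in Section~\ref{quiversetup} the action of $X,Y$ on homogeneous basis vectors $e_i\tp e_j$ of $M(A_l)\tp M(A_m)$ (and the mixed $A$/$B$ and band variants), keeping careful track of the sign/coefficient bookkeeping: applying $(XY)^k$ or $(YX)^k$ to such a vector produces a $\Z/2$-linear combination of shifted basis vectors with binomial-coefficient coefficients. The heart of the string-with-string case, Theorem~\ref{mainthm}:1, is then the auxiliary Proposition~\ref{backdiag} announced in the introduction: it should say that along the "back diagonal" of the tensor grid the coefficient of the relevant extremal basis vector, after applying the maximal alternating word, is a product of binomials which is nonzero modulo $2$ precisely under the stated disjointness/congruence hypotheses. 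Given Proposition~\ref{backdiag}, part~1 follows by: (i) exhibiting a homogeneous basis vector whose cyclic submodule attains the claimed length — one picks the generator of the top and applies the longest surviving alternating word, using Proposition~\ref{backdiag} to see it survives; and (ii) showing no vector does better, which is a degree/grading bound: the total "$X$-degree plus $Y$-degree" available in $M(A_l)\tp M(A_m)$ is bounded by $l+m$, and the defect $l+m-(l\#m)$ (together with the parity refinement governed by whether $[l]_t=[m]_t=0$ for $t<s-1$) accounts for the $+1$ versus $+2$ dichotomy. The distinction between the $M(A_l)\tp M(B_m)$ and $M(A_l)\tp M(A_m)$ formulae comes from the fact that in the $A$-with-$B$ case the two words "point the same way" at one end and "opposite ways" at the other, changing which binomial coefficient controls the top versus the bottom of the alternating chain.

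For the band cases, parts~2 and~3, the strategy is to leverage Proposition~\ref{subquocor} and the string result rather than redo the combinatorics from scratch. A band module $M(A_{l_1}B_{l_2}^{-1},\rho)$ with $n=1$ has the string modules $M(A_{l_1})$ and $M(B_{l_2})$ as subquotients, and conversely it is a subquotient of their direct sum, so by Proposition~\ref{subquocor} and Proposition~\ref{subquo} the Loewy length of $M\tp N$ is essentially governed by the string-with-string numbers — this immediately gives the "otherwise" branches (the clauses reading $\ell(M(A_{l_1}B_{l_2}^{-1})\tp M(A_m))$, etc.) and part~3(a) where $l_1\neq l_2$. The interesting phenomenon is the $\rho=1$ (resp. $\rho=\sigma=1$) exceptional branch, where the Loewy length can be one higher than the string estimate: this is exactly the situation where the "gluing" map $\varphi=\rho\cdot\mathrm{id}$ closing up the band allows an alternating word to wrap around from $V_{m-1}$ back to $V_0$ without the coefficient $\rho^{\pm1}$ obstructing it, producing a longer surviving chain. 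So for these branches I would construct the extremal element explicitly inside $M(A_{l_1}B_{l_2}^{-1},\rho)\tp N$, compute the action of the relevant $(XY)^k$ on it — now the wrap-around contributes a factor $\rho$ or $\sigma$, and the chain survives iff that factor times the relevant binomial is nonzero mod $2$, i.e. iff $\rho$ (or $\sigma$) equals $1$ — and match the exponent against $2+(l_1-1)\#(m_1-1)$, using the elementary identity $l_1\#m_1 = 1+(l_1-1)\#(m_1-1)$ valid when $l_1,m_1$ are both odd (which holds since $A_t$, $B_t$ have the parities forced by the recursion $A_{t+1}=B_tY$, $B_{t+1}=A_tX$). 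The case $\rho=\sigma\neq1$ in part~3(f), giving $l_1+m_1$ rather than $l_1+m_1+1$, is the subtlest: here both wrap-around coefficients appear and the extremal chain picks up $\rho\sigma^{-1}=1$, reviving it relative to the generic band-band bound but by a smaller amount than the $\rho=\sigma=1$ case.

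The main obstacle I anticipate is Proposition~\ref{backdiag} and, relatedly, the precise parity refinement in the $M(A_l)\tp M(A_m)$ formula (the condition "$[l]_t=[m]_t=0$ for all $0\le t<s-1$"). Getting the $+1$-versus-$+2$ dichotomy exactly right requires tracking not just whether a single binomial coefficient vanishes mod $2$, but whether an entire product of them along the extremal alternating chain vanishes, and the answer depends delicately on the low-order bits of $l$ and $m$ below the first shared bit. I expect this to need a careful double induction — on $s$ (the carry position) and on the number of bits — using Lucas' theorem at each step, together with a separate argument (probably the self-duality $\ell(M\tp N)=\ell(M^*\tp N^*)$ used in the proof of Proposition~\ref{subquo}, plus the symmetry $A_t\leftrightarrow B_t$ under swapping $X\leftrightarrow Y$) to cut the number of cases roughly in half. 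The band computations, by contrast, I expect to be mostly bookkeeping once the string case and the wrap-around coefficient analysis are in hand.
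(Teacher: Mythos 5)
Your outline of the string-with-string case is essentially the paper's route: pass to a homogeneous basis, reduce via Lemma~\ref{lem:Basisradical} to cyclic submodules generated by basis vectors, encode the iterated action of $X,Y$ as path counts with binomial coefficients, control them modulo $2$ by Lucas' theorem, and use a back-diagonal reduction (Proposition~\ref{backdiag}) plus the computation $\max\{t\mid \omega^t(0,0)\notin I(l,m)\}=l\#m$ to get both the lower and upper bounds. That part of your plan, while only a sketch of the hardest combinatorics, is aimed in the right direction.

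The band cases contain a genuine gap: your mechanism runs in the wrong direction. A band $M(A_{l}B_{l}\inv,\rho)$ with one-dimensional $\f$ is a \emph{quotient} of the string $M'=M(A_{l}B_{l}\inv)$ (one divides by $\<A_{l}u'+\rho B_{l}u'\>$), so $\ell(M\tp N)\le\ell(M'\tp N)$ always; the scalar $\rho$ can never ``allow an alternating word to wrap around'' and produce a \emph{longer} chain, and the exceptional branches with $\rho=1$ (resp.\ $\rho=\s$, $\rho=\s=1$) are precisely where the Loewy length \emph{drops} below the string estimate, not rises above it. For instance, in part~2 with $\rho=1$, $l_1=l_2$, $l_1\perp m$, $l_1\perp(m-1)$ one has $2+(l_1-1)\#m<1+l_1+m=\ell(M(A_{l_1}B_{l_1}\inv)\tp M(A_m))$, and in part~3(e) the values decrease from the generic $l+m+1$ to $l+m$ (when $\rho=\s\ne1$) and further to $2+(l-1)\#(m-1)$ (when $\rho=\s=1$). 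Consequently, the real work (Propositions~\ref{withuniserial}, \ref{differentlegs}, \ref{differentlegs2}, \ref{bandband} in the paper) is not to exhibit a surviving extremal element when $\rho=1$, but the opposite: to show that for generic $\rho$ (or $\rho\ne\s$) no length is lost when passing to the quotient, and that in the exceptional cases the top nonzero radical layer of $M'\tp N'$ is contained in $U\tp N'+M'\tp V$ --- e.g.\ one must verify, via Proposition~\ref{backdiag} and Lucas, coefficient identities such as $Q_t^{(l,m)}\equiv0$, $Q_{t-1}^{(l-1,m)}\equiv Q_{t-1}^{(l,m-1)}\equiv1$, so that $A_t\cdot(u'\tp v')\equiv A_lu'\tp B_mv'+B_lu'\tp A_mv'$ modulo the complement of the socle-socle span, an element that dies exactly when $\rho=\s=1$. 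Your plan never confronts this containment question. Moreover, the identity you propose to ``match the exponent'', $l\#m=1+(l-1)\#(m-1)$ for $l,m$ odd, is false in general (take $l=1$, $m=3$: $l\#m=2$ but $1+(l-1)\#(m-1)=3$), and the parities of $l_1,m_1$ are not forced to be odd --- they are arbitrary positive integers. The correct relations are those of Lemma~\ref{hashcor} (e.g.\ $(l-1)\#(m-1)=l\#m$ when $l\nperp m$, $(l-1)\nperp m$), and the case analysis in parts 3(b)--(e) hinges on them together with the quotient/containment argument just described; as written, your band-case strategy would produce answers with the inequalities reversed.
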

We remark that if any of the statements $l\perp m$, $(l-1)\perp m$ and $l\perp(m-1)$ holds
true, then so does precisely one of the remaining two. Hence the cases listed in item 3
above are all possible.
Furthermore, in cases 1 and 2 of Theorem \ref{mainthm}, the identities obtained by
interchanging the letters $A$ and $B$ also hold true. This can be seen by observing that
$X\mapsto Y$, $Y\mapsto X$ defines an automorphism of $kD_{4q}$, sending $A_t$ to $B_t$
and vice versa. 

We can now answer the question of when the tensor product of two $kD_{4q}$-modules
contains a projective direct summand. By Proposition~\ref{subquocor}
it suffices to consider modules with simple top and simple
socle. Hence we need only to read off from Theorem~\ref{mainthm} when the tensor product
of two such modules has Loewy length $2q+1$.

\begin{cor} \label{projsumm}
Let $l,m<2q$, $0<l_1,l_2,m_1,m_2<2q$, and $\rho,\sigma\in k\smallsetminus\{0\}$.
\begin{enumerate}
\item $M(A_l)\otimes M(B_m)$ has a projective direct summand if, and only if, 
  $l+m\ge 2q$, 
\item $M(A_l)\otimes M(A_m)$ has a projective direct summand if, and only if, $l+m\ge 2q+1$.
\item 
  $M(A_{l_1}B_{l_2}^{-1},\rho)\otimes M(A_m)$ has a projective
  direct summand precisely when 
  $$\max\{l_1+m-1,l_2+m\}\ge 2q.$$
\item 
  If $l_1\ne l_2$ or $m_1\ne m_2$, then
  $M\left(A_{l_1}B_{l_2}^{-1},\rho\right)\otimes M\left(A_{m_1}B_{m_2}^{-1},\sigma\right)$
  has a projective direct summand if, and only if,  
  $$\max\{l_1+m_1-1,l_1+m_2,l_2+m_1,l_2+m_2-1\}\ge 2q\,.$$
\item If $l_1=l_2$, $m_1=m_2$ then
  $M\left(A_{l_1}B_{l_2}^{-1},\rho\right)\otimes M\left(A_{m_1}B_{m_2}^{-1},\sigma\right)$
  has projective direct summands if, and only if,
  \begin{enumerate}
  \item
    $l_1\nperp(m_1-1)$, and $l_1+m_1\ge 2q$, or
  \item
    $l_1\perp(m_1-1)$, $\rho\ne\sigma$ and $l_1+m_1=2q$.
  \end{enumerate}
\end{enumerate}
\end{cor}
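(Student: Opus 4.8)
The plan is to deduce Corollary~\ref{projsumm} from Theorem~\ref{mainthm}, via Proposition~\ref{subquocor} and Remark~\ref{projquo}. All modules named in the statement --- the uniserial string modules $M(A_l),M(B_m)$ and the band modules $M(A_{l_1}B_{l_2}^{-1},\rho)$ with one-dimensional automorphism --- have simple top and simple socle, and under the stated bounds (each index $<2q$) these modules, hence their tensor products, are $kD_{4q}$-modules, so their Loewy length never exceeds $2q+1$. By Remark~\ref{projquo}, $M\tp N$ has a projective direct summand iff $\ell(M\tp N)=\ell({}_{kD_{4q}}kD_{4q})=2q+1$, so the whole corollary reduces to reading off from Theorem~\ref{mainthm} when $\ell(M\tp N)=2q+1$.

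Writing $2q=2^{a+1}$, the combinatorial heart is a lemma on $l\#m$ for $0\le l,m<2q$: one always has $l\#m\le 2q-1$, and, with $s$ as in the definition of $\#$, $l\#m=2q-1$ exactly when $l\vee m$ contains every bit position $s,\dots,a$, which is equivalent to ``$l\not\perp m$ and $l+m\ge 2q$'' (together with the milder separate possibility ``$l\perp m$ and $l+m=2q-1$''). A companion observation: if $l+m=2q$ and $l\not\perp m$ then $l\wedge m$ is a single power of $2$ --- writing $l=2^{\nu}l''$ with $l''$ odd, one has $2q-l=2^{\nu}\,\overline{l''-1}$ (bitwise complement) in the relevant bit range, so $l\wedge m=2^{\nu}$; this forces such $l,m$ into the ``$\ell=1+l\#m$'' branch of Theorem~\ref{mainthm}:1 for $M(A_l)\tp M(A_m)$, where $\ell\le 2q$. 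Items~(1) and~(2) then follow: in the branches where $\ell=2+l\#m$ one gets $\ell=2q+1\iff l\#m=2q-1\iff l+m\ge 2q$ (respectively $\ge 2q+1$ in item~(2), since $l+m=2q$ has just been excluded from that branch); in the branches where $\ell=1+l\#m$ we have $\ell\le 2q$, while on the other side $l+m<2q$ in the coprime case and $l+m\le 2q$ in the bit-condition subcase of~(2), so the asserted inequalities likewise fail.

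Items~(3)--(5) I would obtain by feeding the previous results back in through two reductions: Proposition~\ref{subquocor}:1 gives $\ell(M(A_{l_1}B_{l_2}^{-1})\tp N)=\max\{\ell(M(A_{l_1})\tp N),\ell(M(B_{l_2})\tp N)\}$, and the algebra automorphism $X\mapsto Y,\ Y\mapsto X$ of $kD_{4q}$ carries $M(B_t)$ to $M(A_t)$ and a band $M(A_{m_1}B_{m_2}^{-1},\sigma)$ to $M(A_{m_2}B_{m_1}^{-1},\sigma')$ (using $\sim_2$), preserving Loewy length; together these reduce every Loewy length in (3)--(5) to instances of (1)--(3). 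Then item~(3) follows from Theorem~\ref{mainthm}:2: its generic branch equals $\max\{\ell(M(A_{l_1})\tp M(A_m)),\ell(M(B_{l_2})\tp M(A_m))\}$, which is $2q+1$ iff $l_1+m\ge 2q+1$ or $l_2+m\ge 2q$, that is, iff $\max\{l_1+m-1,l_2+m\}\ge 2q$; in the exceptional branch the hypotheses $l_1\perp m$, $l_1\perp(m-1)$ force $\ell=2+(l_1-1)\#m<2q+1$ while also $\max\{l_1+m-1,l_2+m\}=l_1+m<2q$, so both sides stay false. Item~(4) is Theorem~\ref{mainthm}:3(a) processed the same way. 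For item~(5) I would treat the four subcases (b)--(e) of Theorem~\ref{mainthm}:3 one at a time, recording (from the remark after Theorem~\ref{mainthm}, that of the three relations $l_1\perp m_1$, $(l_1-1)\perp m_1$, $l_1\perp(m_1-1)$ either none or exactly two hold) that (b),(c) are precisely the subcase $l_1\nperp(m_1-1)$ and (d),(e) precisely $l_1\perp(m_1-1)$, and checking: in (b), $\ell=2+l_1\#m_1=2q+1\iff l_1+m_1\ge 2q$; in (c) and (d), coprimality gives $l_1+m_1<2q$, so no formula reaches $2q+1$; in (e) one has $l_1+m_1\le 2q$, the formulae valid for $\rho=\sigma$ stay $\le 2q$, and $l_1+m_1+1=2q+1\iff l_1+m_1=2q$ handles $\rho\ne\sigma$.

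The genuine obstacle is item~(5): four subcases, each with its own dependence on the scalars $\rho,\sigma$, and one must verify precisely that under the coprimality hypotheses of each subcase $l_1+m_1$ can never exceed $2q$ --- this is what pins down the contrast in Corollary~\ref{projsumm}(5) between ``$l_1+m_1\ge 2q$'' in~(a) (from subcase (b), where $l_1+m_1$ is unbounded above) and ``$l_1+m_1=2q$'' in~(b) (from subcases (d),(e), where it is bounded by $2q$). Keeping track of which orthogonality relations hold in which subcase, and carrying each Loewy-length value correctly back through the $X\leftrightarrow Y$ automorphism and Proposition~\ref{subquocor}, is where the care must be taken; once the lemma of the second paragraph is available, the arithmetic itself is elementary.
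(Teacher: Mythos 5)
Your proposal is correct and follows essentially the same route as the paper: reduce via the fact that a projective summand occurs exactly when the Loewy length is $2q+1$, prove that for $l,m<2q$ one has $l\#m\le 2q-1$ with equality (in the relevant non-disjoint cases) precisely when $l+m\ge 2q$, and then read off each item from Theorem~\ref{mainthm} together with Proposition~\ref{subquocor} and the $X\leftrightarrow Y$ symmetry. The only difference is that you spell out some arithmetic the paper leaves implicit (e.g.\ that $l+m=2q$ with $l\nperp m$ forces the first branch of Theorem~\ref{mainthm}:1, so that branch two corresponds to $l+m\ge 2q+1$), which is a harmless elaboration of the same argument.
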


For $l,m<2q$, the condition $l+m\ge 2q$ implies $l\nperp m$.
Thus, in particular, in 5(a) above, the condition $l_1\perp(m_1-1)$ is equivalent to
$(l_1-1)\perp m_1$, and similarly, in 5(b), $l_1\nperp(m_1-1)$ could be replaced by
$(l_1-1)\nperp m_1$.

\begin{proof}
A $kD_{4q}$-module has a projective summand if, and only if, its Loewy length equals $2q+1$.
Observe that if $M$ and $N$ are $kD_{4q}$-modules with $\ell(M)=l+1$ and $\ell(N)=m+1$,
then $\ell(M\tp N)\le l+m+1$.
In particular, if $l\perp m$, then $\ell(M\tp N)\le l+m+1\le 2q$, so $M\tp N$ contains no
projective summands.  From here on we assume $l,m<2q$ and $l\nperp m$.

From the definition it is clear that $l\#m\le 2q-1$. 
Moreover, $l\#m=2q-1$ if, and only if, $[l]_r+[m]_r=1$ for all
$r\in\{s,s+1,\ldots,\log_2(q)\}$ (here $s\in\N$ is as in the definition of $l\#m$),
which is equivalent to $l+m\ge2q$. 
Now, it follows from Theorem~\ref{mainthm}:1 that $\ell(M(A_l)\tp M(B_m))=2q+1$ precisely when
$l+m\ge2q$. As for $M(A_l)\tp M(A_m)$, its Loewy length is $2q+1$ if, and only if,
$l+m\ge2q$ and there exists a $t<s-1$ such that $[l]_t=1$ or $[m]_t=1$. This is equivalent
to $l+m\ge2q+1$.

For 3, note that, by Proposition~\ref{subquocor} and 1--2 above, $\max\{l_1+m-1,l_2+m\}\ge 2q$ if, and only if, 
$M(A_{l_1}B_{l_2}\inv)\tp M(A_m)$ has a projective direct summand. 
Now Theorem~\ref{mainthm}:2 tells us that 
$\ell(M(A_{l_1}B_{l_2}\inv,\rho)\tp M(A_m))=\ell(M(A_{l_1}B_{l_2}\inv)\tp M(A_m))$,
unless $l_1=l_2$, $l_1\perp m$, $l_1\perp(m-1)$ and $\rho=1$. In the latter case,
$\ell(M(A_{l_1}B_{l_2}\inv)\tp M(A_m))\le l_1+m=\max\{l_1+m-1,l_2+m\}<2q$ and no
projective summands appear. This proves the result in this case.

Let $M=M(A_{l_1}B_{l_2}\inv,\rho)$ and $N=M(A_{m_1}B_{m_2}\inv,\s)$.
If $l_1\ne l_2$ then $\ell(M\tp N)=\ell(M(A_{l_1}B_{l_2}\inv)\tp N)$ by
Theorem~\ref{mainthm}:3(a), whence the result follows from 1--3 above.
The case $m_1\ne m_2$, of course, is analogous.

It remains to prove 5. Assume $l_1=l_2$ and $m_1=m_2$. Clearly, if $l_1\perp m_1$ then
$l_1+m_1<2q$ and $M\tp N$ has no projective summand. Suppose instead that $l_1\nperp m_1$.
If $l_1\nperp(m_1-1)$ then also $(l_1-1)\nperp m_1$, and Theorem~\ref{mainthm}:3(b) means
that $M\tp N$ has a projective summand if, and only if, $2+l_1\#m_1=2q+1$, that is, if and
only if $l_1+m_1\ge 2q$.
If $l_1\perp(m_1-1)$ then also $(l_1-1)\perp m_1$, and we are in the situation of
Theorem~\ref{mainthm}:3(e). The condition $(l_1-1)\perp m_1$ means that 
$2+(l_1-1)\#(m_1-1)\le l_1+m_1\le 2q$, consequently, $\ell(M\tp N)<2q+1$ whenever
$\rho=\s$.
If $\rho\ne\s$ then $\ell(M\tp N)=l_1+m_1+1$, so $M\tp N$ has a projective direct summand
precisely when $l_1+m_1=2q$.
\end{proof}

\begin{ex}
In many cases the Loewy length of a tensor products $M\otimes N$ of indecomposable modules $M$ and $N$ can be reduced to determining the maximum of the Loewy lengths of the tensor products of uniserial modules corresponding to the directed components of the defining words for $M$ and $N$, by Proposition \ref{subquocor} and Theorem \ref{mainthm}.  An example will be illuminating. We consider a tensor product of band modules $M=M(A_{l_1}B_{l_2}^{-1},\varphi)$ and $N=M(A_{m_1}B_{m_2}^{-1},\psi)$, where $\varphi$ and $\psi$ are indecomposable automorphisms of $k^{a}$ and $k^b$ respectively.
\begin{enumerate}
\item If $a,b>1$ then, by Proposition~\ref{subquocor},
\begin{multline*}
\ell(M\tp N)=\max\{\ell(M(A_{l_1})\tp M(A_{m_1})), \ell(M(B_{l_2})\tp M(A_{m_1})), \\
\ell(M(A_{l_1})\tp M(B_{m_2})),\ell(M(B_{l_2})\tp M(B_{m_2}))\} \,.
\end{multline*}

\item If $a>1$, $b=1$, then 
$\ell(M\tp N) = \max\{\ell(M(A_{l_1})\tp N),\ell(M(B_{l_2})\tp N)\}$,
 the value of which is given by Theorem~\ref{mainthm}:2.
Analogously, $\ell(M\tp N) = \max\{\ell(M\tp M(A_{m_1})),\ell(M\tp M(B_{m_2}))\}$ if
$a=1$, $b>1$.
\end{enumerate}
Assume that $a=b=1$, so $\varphi,\psi\in k\minus \{0\}$.
\begin{enumerate} \addtocounter{enumi}{2}
\item 
If $l_1\neq l_2$ then from Theorem \ref{mainthm}:3(a) and Proposition \ref{subquocor}
follows 
$$\ell(M\otimes N)=\max\{\ell(M(A_{l_1})\otimes N),\ell(M(B_{l_2})\tp N)\},$$
which is computed in Theorem~\ref{mainthm}:2.
In particular, if $\psi\neq 1$, a further application of
Proposition \ref{subquocor} reduces this once again to 
\begin{multline*}
\ell(M\tp N)=\max\{\ell(M(A_{l_1})\tp M(A_{m_1})), \ell(M(B_{l_2})\tp M(A_{m_1})), \\
\ell(M(A_{l_1})\tp M(B_{m_2})),\ell(M(B_{l_2})\tp M(B_{m_2}))\} \,.
\end{multline*}
Of course, the case $m_1\ne m_2$ is analogous.

\item If $l_1=l_2$ and $m_1=m_2$, then $\ell(M\tp N)$ is given directly by
  Theorem~\ref{mainthm}:3(b)--(e).
\end{enumerate}

We consider the case $l=146$ and $m=266$, and calculate
$\ell(M(A_lB_l\inv,1)\tp M(A_mB_m\inv,1))$.  The binary expansions of $l$ and $m$ are 
$$146 = 2+2^4+2^7 \quad \mbox{and} \quad 266= 2+2^3+2^8.$$
Thus, we have $(l-1)\perp m$ and $l\perp (m-1)$ whilst $l\nperp m$. 
Theorem \ref{mainthm}:3(e) now gives $\ell(M\otimes N)=2+(l-1)\#(m-1)=411.$  
In contrast, $l\#m=411$, and hence by Theorem \ref{mainthm}:1, we have 
\[\ell(M(A_{146})\tp M(A_{266}))= 412\mbox{ and } \ell(M(A_{146})\tp M(B_{266}))= 413.\]
\end{ex}

In fact, the difference between $\ell(M\otimes N)$ and the maximum of Loewy lengths of
tensor products of uniserial modules given by the directed components of the words
defining $M$ and $N$ can be arbitrarily large, as demonstrated in the following example. 

\begin{ex}
Let $l,m\in\mathbb{N}$ be such that $(l-1)\perp m$ and $l\perp(m-1)$, that is, $l=\lambda+2^a$,
$m=\mu+2^a$ with $a\in\mathbb{N}$, $\lambda\perp\mu$ and $2^{a+1}\mid \lambda,\mu$.
In particular, $\nu(l)=\nu(m)=a$.
Then, by Theorem~8:3(e),
\begin{align*}
&\ell(M(A_lB_l\inv,1)\tp M(A_mB_m\inv,1) ) = 2+ (l-1)\#(m-1) 
= 2 + \lambda +\mu + (2^a-1) , \\
\intertext{whilst}
&\max\{\ell(M(A_l)\tp M(A_m)), \ell(M(B_l)\tp M(A_m)),
\ell(M(A_l)\tp M(B_m)),\ell(M(B_l)\tp M(B_m))\} \\
&\hspace{2ex} = \ell(M(A_l)\tp M(B_m)) = 2+ l\#m = 2+ \lambda +\mu + (2^{a+1}-1) \\
&\hspace{2ex} = \ell(M(A_lB_l\inv,1)\tp M(A_mB_m\inv,1) ) + 2^a \,.
\end{align*}
\end{ex}

\section{Basic setup} \label{quiversetup}
In view of the isomorphism (\ref{isomorphism}), we may consider any $M\in\modu kD_{4q}$ as
a module of the algebra 
$k\<X,Y\>/\left(\, X^2,\,Y^2,\,(XY)^q+(YX)^q \,\right)$. The module structure of the
tensor product $M\tp N$ of two modules $M,N\in\modu kD_{4q}$ is given by 
\begin{align*}
X(m\tp n)=& Xm\tp n + m\tp Xn + Xm\tp Xn ,\\
Y(m\tp n)=&Ym\tp n + m\tp Yn + Ym\tp Yn\end{align*}
for $m\in M$ and $n\in N$.  We analyse this action in terms of a quiver
representation, as is described in the remainder of this section. 
Define a quiver $\Gamma$ as follows:
\begin{align*}
 \Gamma_0&=\N\times\N \,, \qquad 
 \Gamma_1=\{\a_{i,j},\b_{i,j},\gamma_{l,m}\mid (i,j),(l,m)\in\N\times\N,\;l+m\in2\N\}\,, \\
 &\begin{cases}
   \a_{i,j}:(i,j)\to(i,j+1), \\
   \b_{i,j}:(i,j)\to(i+1,j), \\
   \gamma_{i,j}:(i,j)\to(i+1,j+1).
 \end{cases}
\end{align*}
Let $V$ be the characteristic representation of $\Gamma$, that is, the representation
obtained by setting $V_{(i,j)}=k$ for all vertices $(i,j)\in\Gamma_0$, and $V_a=\I_k$ for
all arrows $a\in \Gamma_1$.
We write $1_{(i,j)}$, or simply $(i,j)$ when the context is clear, for the identity
element of $k=V_{(i,j)}$ at the vertex $(i,j)\in\Gamma_0$. 
For $r,s\in\N$, let $V(r,s)=V/I(r,s)$ where 
$I(r,s)=\spann_k\{1_{(i,j)}\mid i>r \;\mbox{or}\; j>s\}\subset V$ is
the subrepresentation generated by all elements $1_{(r+1,j)}$ and $1_{(i,s+1)}$, $i,j\in\N$.

Set
\begin{align*}
X\cdot {(i,j)}=
\begin{cases}
  0                             & \mbox{if}\quad  i,j\equiv0, \\
  (i+1,j)                       & \mbox{if}\quad  i\equiv1,\:j\equiv0, \\
  (i,j+1)                       & \mbox{if}\quad  i\equiv0,\:j\equiv1, \\
  (i+1,j) + (i,j+1) + (i+1,j+1) & \mbox{if}\quad  i,j\equiv1,
\end{cases} \\
\intertext{and}
Y\cdot {(i,j)}=
\begin{cases}
  0                             & \mbox{if}\quad  i,j\equiv1, \\
  (i+1,j)                       & \mbox{if}\quad  i\equiv0,\:j\equiv1, \\
  (i,j+1)                       & \mbox{if}\quad  i\equiv1,\:j\equiv0, \\
  (i+1,j) + (i,j+1) + (i+1,j+1) & \mbox{if}\quad  i,j\equiv0.
\end{cases}
\end{align*}
This gives $V$ the structure of an (infinite-dimensional) $\Lambda_0$-module. Since
$I(r,s)$ is closed under the $\Lambda_0$-action, $V(r,s)$ also carries a
$\Lambda_0$-module structure induced from $V$.

Let $M=M(A_m)$ and $N=M(A_n)$. 
The two modules have standard bases $\{A_r(u)\}_{r\in\{0,\ldots,m\}}$ and
$\{A_s(v)\}_{s\in\{0,\ldots,n\}}$ respectively, where $u\in M$ and $v\in N$ are top basis
elements.
Define linear maps $\f:M\tp N \to V(m,n)$ by $\f(A_r(u)\tp A_s(v))= {(r,s)}$ and
$\omega:V\to V$ by
$$\omega(i,j)=\sum_{a\in\Gamma_1}a(i,j).$$
\begin{prop}
\begin{enumerate}
\item The map $\f$ is an isomorphism of $\Lambda_0$-modules. \label{fiso}
\item The map $\omega$ is injective. \label{injective}
\item Let $r,s,t\in\N$. Then
\begin{align*}
  A_t\cdot(r,s)&= \omega^t(r,s), &
  B_t\cdot\left((r,s\!+\!1)+(r\!+\!1,s)\right)&=\omega^t(r\!+\!1,s\!+\!1) &\mbox{if}&\quad
  r\equiv s\equiv0, \\
  B_t\cdot(r,s)&= \omega^t(r,s), &
  A_t\cdot\left((r,s\!+\!1)+(r\!+\!1,s)\right)&=\omega^t(r\!+\!1,s\!+\!1) &\mbox{if}&\quad
  r\equiv s\equiv1.
\end{align*}
\label{omega} 
\end{enumerate}
\end{prop}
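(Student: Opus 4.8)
The plan is to prove the three items of the proposition more or less independently, exploiting the explicit combinatorial description of $\Gamma$ and of the $\Lambda_0$-action on $V$. For item~\eqref{fiso}, I would first check that $\f$ is a well-defined linear isomorphism: the elements $A_r(u)\tp A_s(v)$ with $0\le r\le m$, $0\le s\le n$ form a homogeneous basis of $M\tp N$, and $\f$ sends them bijectively onto the residue classes $(r,s)$ with $0\le r\le m$, $0\le s\le n$, which form a $k$-basis of $V(m,n)=V/I(m,n)$ by the definition of $I(m,n)$. It then remains to verify $\f(X\cdot x)=X\cdot\f(x)$ and $\f(Y\cdot x)=Y\cdot\f(x)$ on the basis elements $x=A_r(u)\tp A_s(v)$. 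For this I would compute $X\cdot(A_r(u)\tp A_s(v))$ using the coproduct formula $X(m\tp n)=Xm\tp n+m\tp Xn+Xm\tp Xn$ together with the string action~\eqref{stringaction} on $M(A_m)$ and $M(A_n)$, recalling that in $M(A_t)$ the letter at position $i$ is $X$ when $i$ is odd and $Y$ when $i$ is even (so $X$ annihilates $A_r(u)$ exactly when $r$ is even, etc.). One then matches the four cases $r,s$ even/odd against the four-case definition of $X\cdot(i,j)$ above, and similarly for $Y$. This is a routine but slightly fiddly parity bookkeeping; the only subtlety is near the boundary $r=m$ or $s=n$, where terms of $X\cdot x$ land outside the standard basis range and must be seen to vanish in $V(m,n)$ — which they do precisely because $I(m,n)$ kills all $(i,j)$ with $i>m$ or $j>n$.

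For item~\eqref{injective}, injectivity of $\omega:V\to V$ can be obtained by introducing a suitable filtration or total order on the vertex set $\Gamma_0=\N\times\N$: since every arrow $\a_{i,j},\b_{i,j},\gamma_{i,j}$ strictly increases the quantity $i+j$ (by $1$, $1$, $2$ respectively), the map $\omega$ is "strictly upper triangular" with respect to the grading of $V$ by $i+j$. More precisely, writing an arbitrary $v\in V$ as a finite sum $v=\sum c_{i,j}(i,j)$ and letting $d$ be the minimal value of $i+j$ over the support, the component of $\omega(v)$ in degree $d+1$ is $\sum_{i+j=d}c_{i,j}\bigl(\a_{i,j}+\b_{i,j}\bigr)$-image, i.e.\ $\sum_{i+j=d}c_{i,j}\bigl((i,j\!+\!1)+(i\!+\!1,j)\bigr)$, and one checks this degree-$(d+1)$ part is nonzero whenever not all $c_{i,j}$ with $i+j=d$ vanish — for instance by looking at the coefficient of $(i_0,j_0+1)$ where $(i_0,j_0)$ is the support element of degree $d$ with $i_0$ minimal, which receives a contribution only from $(i_0,j_0)$ (via $\a$) and from $(i_0+1,j_0)$ (via $\b$), and the latter has degree $d+1>d$ so is not in the minimal layer. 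Hence $\omega(v)\ne0$ for $v\ne0$.

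For item~\eqref{omega}, I would proceed by induction on $t$. The base case $t=0$ is $A_0=B_0=1$, giving $A_0\cdot(r,s)=(r,s)=\omega^0(r,s)$ trivially, and likewise the statements involving $(r,s\!+\!1)+(r\!+\!1,s)$ reduce for $t=0$ to identities that hold by the parity hypotheses on $r,s$ together with the definitions of $X\cdot$ and $Y\cdot$ (when $r\equiv s\equiv0$, $Y\cdot(r,s)=(r\!+\!1,s)+(r,s\!+\!1)+(r\!+\!1,s\!+\!1)$, and one also computes $X$ acting on $(r,s\!+\!1)$ and $(r\!+\!1,s)$; the inductive step for the $B_t$-on-$((r,s\!+\!1)+(r\!+\!1,s))$ identity is seeded appropriately). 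The inductive step uses the recursions $A_{t+1}=B_tY$, $B_{t+1}=A_tX$: for instance $A_{t+1}\cdot(r,s)=B_t\cdot\bigl(Y\cdot(r,s)\bigr)$, and when $r\equiv s\equiv0$ one has $Y\cdot(r,s)=(r\!+\!1,s)+(r,s\!+\!1)+(r\!+\!1,s\!+\!1)$; applying the induction hypothesis for $B_t$ to the first two terms and a short direct computation for the third (noting $(r\!+\!1,s\!+\!1)$ has both coordinates odd, so $X$ annihilates it, and tracking how $B_t$ builds up) one collapses everything to $\omega^{t+1}(r,s)$, using crucially that $\omega$ commutes appropriately with the string-word actions on $V$. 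The key mechanical lemma underpinning this is that for the characteristic representation, $X\cdot(i,j)+Y\cdot(i,j)$ equals $(i\!+\!1,j)+(i,j\!+\!1)+(i\!+\!1,j\!+\!1)=\omega(i,j)$ in every parity case — this is immediate from the two displayed case-definitions, since in each of the four parity combinations exactly one of $X\cdot(i,j),Y\cdot(i,j)$ is the "full" sum of the three outgoing arrows and the other is either $0$ or a single arrow, and adding them gives $\omega(i,j)$; wait, one must check this sums correctly, and indeed it does because the single-arrow case and the zero case are complementary. I expect the main obstacle to be item~\eqref{omega}: organising the induction so that the two "paired" identities (the ones with argument $(r,s\!+\!1)+(r\!+\!1,s)$) interlock correctly with the "plain" identities at each step, and handling the parity flips that occur as $t$ increments (since $A_{t+1}$ ends in $Y$ but $A_t$ ends in $X$), requires care; everything else is bookkeeping.
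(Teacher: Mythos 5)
Your overall route coincides with the paper's: item~1 by direct verification of equivariance on the homogeneous basis, item~2 by a leading-term minimality argument (the paper minimises the second coordinate of the support, you grade by $i+j$ and then minimise $i$ -- same idea), and item~3 by induction on $t$ via $A_{t+1}=B_tY$, $B_{t+1}=A_tX$; your observation that $X\cdot(i,j)+Y\cdot(i,j)=\omega(i,j)$ in every parity case is correct and is a compact packaging of the computations the paper carries out explicitly.

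Two steps, however, are wrong as written. In item~2, the competing contribution to the coefficient of $(i_0,j_0+1)$ does not come from $(i_0+1,j_0)$ via $\beta$ -- that arrow ends at $(i_0+2,j_0)$. Since $\alpha$ raises the second coordinate, $\beta$ the first, and $\gamma$ both, the only vertices mapping onto $(i_0,j_0+1)$ are $(i_0,j_0)$ via $\alpha$, $(i_0-1,j_0+1)$ via $\beta$, and $(i_0-1,j_0)$ via $\gamma$; your stated exclusion (``degree $d+1$, so not in the minimal layer'') is therefore aimed at the wrong vertex. The conclusion survives because both genuine competitors are excluded by your own choices -- $(i_0-1,j_0)$ has degree $d-1<d$, and $(i_0-1,j_0+1)$ has degree $d$ but first coordinate $<i_0$ -- but the justification needs to be rewritten accordingly. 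In item~3, the base case is off: for $t=0$ the ``paired'' identities would read $(r,s+1)+(r+1,s)=\omega^0(r+1,s+1)=(r+1,s+1)$, which is false, so these identities cannot be seeded at $t=0$; they begin at $t=1$, and that is exactly the case the paper computes directly (evaluating $\omega(r,s)$ and $\omega\left((r,s+1)+(r+1,s)\right)$ for $r\equiv s$) before inducting. Moreover, the inductive step -- feeding the plain identity for one parity and the paired identity for the other into each other through $A_u=B_{u-1}Y$ and $B_u=A_{u-1}X$, using that $X$ (resp.\ $Y$) kills the diagonal term of the appropriate parity -- is precisely where the content of the proposition lies, and your sketch (``seeded appropriately'', ``tracking how $B_t$ builds up'') leaves it unverified; the paper's proof consists essentially of writing this step out.
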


\begin{proof}
The proof of \ref{fiso} is an easy verification.

For \ref{injective}, consider a point $x=\sum_{i,j}\lambda_{i,j}{(i,j)}$ (where
$\l_{i,j}\in k$) in the kernel of $\omega$. 
Suppose that there exists a point $(l,m)\in \Gamma_0$ such that $\l_{l,m}\ne0$, and assume
that the natural number $m$ is minimal with this property.
We have either $\omega(l,m)=(l+1,m)+(l,m+1)$ or
$\omega(l,m)=(l+1,m)+(l,m+1)+(l+1,m+1)$, in both cases,
$\omega(\l_{l,m}{(l,m)})$ has a homogeneous component $\l_{l,m}(l+1,m)\ne0$.
Now $\omega(x-\l_{l,m}{(l,m)})=-\l_{l,m}\omega(l,m)\ne0$ hence, since
$\omega(i,j)\in\spann\{(i+1,j),(i,j+1),(i+1,j+1)\}$, it follows that either $\l_{l,m-1}$
or $\l_{l-1,m-1}$ is non-zero, contradicting the minimality of $m$.
Hence $x=0$, which proves that $\omega$ is injective.  

To prove \ref{omega}, we suppose that $r\equiv s$ and compute
\begin{align*}
  &\omega(r,s)= (r,s\!+\!1) + (r\!+\!1,s) + (r\!+\!1,s\!+\!1), \\
  &\omega\left((r,s\!+\!1) + (r\!+\!1,s)\right) = (r,s\!+\!2) + (r\!+\!2,s)
\end{align*}
hence
\begin{align*}
  A_1\cdot(r,s)&= Y\cdot(r,s)= \omega(r,s), &
  B_1\cdot\left((l,m\!+\!1)+(l\!+\!1,m)\right)&=\omega(l\!+\!1,m\!+\!1) &\mbox{if}&\quad
  r,s\equiv0, \\
  B_1\cdot(r,s)&= X\cdot(r,s)= \omega(r,s), &
  A_1\cdot\left((l,m\!+\!1)+(l\!+\!1,m)\right)&=\omega(l\!+\!1,m\!+\!1) &\mbox{if}&\quad
  r,s\equiv1.
\end{align*}
Now let $r,s\equiv0$, $u\ge1$ and assume, by induction, that \ref{omega} holds for all 
$t<u$. Then
\begin{align*}
B_u(r,s)&= A_{u-1}Y(r,s)=A_{u-1}\left( (r,s\!+\!1)+(r\!+\!1,s)+(r\!+\!1,s\!+\!1) \right)\\
 &= \omega^{u-1}\left( (r,s\!+\!1)+(r\!+\!1,s) \right) + \omega^{u-1}(r\!+\!1,s\!+\!1) =
 \omega^u(r,s) \,.
\end{align*}
The other identities are proved similarly.
\end{proof}

\begin{lma} \label{Qlma}
  $$\omega^t(0,0)=\sum_{l,m\in\N}Q_t^{(l,m)}(l,m)$$
  where
  \begin{align} \label{Q}
    Q_t^{(l,m)} &= \binom{2t-l-m}{t-m} \binom{\rned{\frac{l+m}{2}}}{l+m-t} =
    \binom{2t-l-m}{t-l} \binom{\rned{\frac{l+m}{2}}}{l+m-t}
  \end{align}
is the number of paths in $\Gamma$ from $(0,0)$ to $(l,m)$ of length $t$. 
\end{lma}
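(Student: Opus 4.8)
The plan is to first recognise the coefficient $Q_t^{(l,m)}$ as a count of lattice paths, and then to evaluate that count by tracking the parity condition that decides where a diagonal arrow is available. First I would observe that $\omega$ carries the generator $1_{(i,j)}$ to the sum of the generators at the heads of the arrows leaving $(i,j)$, that is, to $1_{(i,j+1)}+1_{(i+1,j)}$, together with $1_{(i+1,j+1)}$ exactly when $i+j$ is even. A straightforward induction on $t$ then shows that the coefficient of $(l,m)$ in $\omega^t(0,0)$ equals the number of length-$t$ paths in $\Gamma$ from $(0,0)$ to $(l,m)$, viewed as an element of $k$ (the coordinate sum strictly increases along every arrow, so no vertex is ever revisited and ``walk'' and ``path'' amount to the same thing here). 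Denoting this non-negative integer by $Q_t^{(l,m)}$, what remains is a purely combinatorial evaluation.

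Next I would note that any such path uses some number $v$ of $\alpha$-arrows (each raising the second coordinate by $1$), $h$ of $\beta$-arrows (raising the first by $1$) and $d$ of $\gamma$-arrows (raising both by $1$), and that matching endpoints and length forces $h+d=l$, $v+d=m$ and $h+v+d=t$, whence $d=l+m-t$, $h=t-m$ and $v=t-l$. If any of these is negative there is no path and, correspondingly, at least one binomial factor in the statement is zero; so one may assume $d,h,v\ge 0$. A path is then precisely an interleaving of the three bundles of steps, subject only to the constraint that every $\gamma$-step be taken at a vertex $(i,j)$ with $i+j$ even.

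The heart of the argument is the count of such interleavings. Tracking parity along a path, after any prefix the coordinate sum is congruent mod $2$ to the number of non-diagonal ($\alpha$- or $\beta$-) steps used so far, since a $\gamma$-step alters the sum by $2$ and the others by $1$; hence a $\gamma$-step is admissible exactly when it is preceded by an even number of non-diagonal steps. So I would count, first, the $\binom{h+v}{h}$ orderings of the non-diagonal steps, and then the ways of inserting the $d$ diagonal steps into the ``gaps'' lying after $0,2,4,\dots$ non-diagonal steps --- of which there are $\rned{(h+v)/2}+1$, the final gap being present precisely when $h+v$ is even --- with arbitrarily many diagonal steps allowed in a single gap, because a $\gamma$-step preserves the even parity needed for the next. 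By stars and bars this gives $\binom{d+\rned{(h+v)/2}}{d}$ insertions, so
$$Q_t^{(l,m)}=\binom{h+v}{h}\binom{d+\rned{(h+v)/2}}{d}.$$
Substituting $h+v=2t-l-m$, $h=t-m$, $d=l+m-t$ and $d+\rned{(h+v)/2}=(l+m-t)+\bigl(t-\rupp{(l+m)/2}\bigr)=\rned{(l+m)/2}$ then yields the first displayed expression, and the second follows from $\binom{n}{k}=\binom{n}{n-k}$ applied with $n=2t-l-m$, $k=t-m$. I expect the only delicate point to be the gap bookkeeping in this last step: one must make sure that consecutive $\gamma$-steps may be stacked into a single gap, that the gap after all $h+v$ non-diagonal steps is counted if and only if $h+v$ is even, and that the cases admitting no path are exactly those in which the product of the two binomial coefficients vanishes; everything else is routine once the parity is accounted for.
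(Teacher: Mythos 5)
Your proof is correct and follows essentially the same route as the paper's: identify the coefficient as a path count, fix the numbers of $\alpha$-, $\beta$- and $\gamma$-steps from the endpoint and length, order the non-diagonal steps in $\binom{2t-l-m}{t-m}$ ways, and insert the diagonal steps into the gaps preceded by an even number of non-diagonal steps. Your explicit stars-and-bars count $\binom{d+\rned{(h+v)/2}}{d}$ is exactly the paper's $\binom{\rned{(l+m)/2}}{l+m-t}$, just spelled out a bit more carefully than in the paper's phrasing.
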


\begin{proof}
Defining $Q_t^{(l,m)}$ as the number of paths of length $t$ from $(0,0)$ to $(l,m)$, the
expression for $\omega^t(0,0)$ certainly holds.

Let $P_j^{(l,m)}$ be the number of paths from $(0,0)$ to $(l,m)$ containing
precisely $j$ diagonal arrows $\gamma_{r,s}$. Then $Q_t^{(l,m)}=P_{l+m-t}^{(l,m)}$.

To find an expression for $P_j^{(l,m)}$, consider a path from $(0,0)$ to $(l,m)$ with $j$
arrows of type $\gamma$, it then has $l-j$ arrows of type $\a$ and $m-j$ arrows of type
$\b$. For every point $(r,s)$ in $\Gamma$, there is one arrow of type $\a$ and one arrow
of type $\b$ starting in $(r,s)$ (namely, $\a_{r,s}$ respectively $\b_{r,s}$), hence there
are $\binom{l+m-2j}{l-j}$ ways of choosing the mutual order of all arrows of type $\a$
respectively $\b$. 

Next choose where in the string of arrows $\a$ and $\b$ that the arrows $\gamma$ are to be 
inserted. A $\gamma$ could be inserted at points preceded by an even number of arrows $\a$
and $\b$, and multiple arrows $\gamma$ could be inserted at each point. 
There are $\rned{\frac{l+m}{2}}$ such points, so the number of choices is
$\binom{\rned{\frac{l+m}{2}}}{j}$.
This proves that $P_j^{(l,m)}=\binom{l+m-2j}{l-j} \binom{\rned{\frac{l+m}{2}}}{j}$, and
hence 
$Q_t^{(l,m)}=P_{l+m-t}^{(l,m)}=\binom{2t-l-m}{t-m} \binom{\rned{\frac{l+m}{2}}}{l+m-t}$.
\end{proof}

The following properties of $Q_t^{(l,m)}$ are easily derived from the definition:

\begin{align}
  \label{symm}
Q_t^{(l,m)} &= Q_t^{(m,l)}, \\
  \label{notime}
Q_t^{(l,m)} &\ne0 \quad\mbox{only if}\quad \max\{l,m\}\le t\le l+m, \\
  \label{diag}
Q_t^{(l,l)} &= \binom{2(t-l)}{t-l} \binom{l}{2l-t}=\d_{t,l}, \\
  \label{verge}
Q_t^{(l,0)} &= \binom{2t-l}{t} \binom{\rned{\frac{l}{2}}}{l-t} = \d_{t,l},\\
Q_{l+m}^{(l,m)} &= \binom{l+m}{l}. \label{j=0}
\end{align}

\section{Back diagonality} \label{bdlma}

The following proposition plays a key role in the determination of the Loewy length of
tensor products of uniserial modules. While Proposition~\ref{fiso} 
tells us that the length of a module generated by a homogeneous basis element is
expressible in terms of the elements $\omega^t(0,0)=\sum_{l,m\in\N}Q_t^{(l,m)}(l,m)$,
$t\in\N$, the result below basically means that all terms in this sum except the ones for
which $l+m=t$ can be disregarded.

\begin{prop} \label{backdiag}
Let $t\in\N$ and suppose that $Q_t^{(l,m)}\equiv 1$. Then there exists $l'\le l$, and
$m'\le m$ such that $l'+m'=t$ and $Q_t^{(l',m')}\equiv1$.
\end{prop}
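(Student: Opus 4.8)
The plan is to use the explicit formula~\eqref{Q} for $Q_t^{(l,m)}$ together with Lucas' theorem to extract combinatorial constraints from the hypothesis $Q_t^{(l,m)}\equiv1$, and then to exhibit the pair $(l',m')$ explicitly. Recall that $Q_t^{(l,m)}=\binom{2t-l-m}{t-m}\binom{\rned{(l+m)/2}}{l+m-t}$. Set $j=l+m-t$; this is the number of diagonal arrows in a path, and by~\eqref{notime} we have $0\le j\le\min\{l,m\}$ and $j\le\rned{(l+m)/2}$. The hypothesis says both binomial factors are odd. The second factor $\binom{\rned{(l+m)/2}}{j}\equiv1$ means, by Lucas, that $j\perp(\rned{(l+m)/2}-j)$, i.e.\ the binary digits of $j$ are among those of $\rned{(l+m)/2}$. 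The first factor $\binom{2t-l-m}{t-m}=\binom{l+m-2j}{l-j}\equiv1$ means $(l-j)\perp(m-j)$. The goal $l'+m'=t$ corresponds to taking $j'=0$, so we want $l'\le l$, $m'\le m$ with $l'+m'=l+m-j$ and $\binom{l+m-j-l'-0}{\,\cdot\,}$-type conditions; concretely, since $Q_t^{(l',m')}$ with $l'+m'=t$ equals $\binom{l'+m'}{l'}$ by~\eqref{j=0} (note $2t-l'-m'=t$, $t-m'=l'$), we need a splitting $t=l'+m'$ with $l'\perp m'$, $l'\le l$, $m'\le m$.

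The core step, then, is: \emph{given that $(l-j)\perp(m-j)$ and that $j$'s binary digits lie inside those of $\rned{(l+m)/2}$, produce $l'\le l$ and $m'\le m$ with $l'+m'=(l-j)+(m-j)+j = l+m-j = t$ and $l'\perp m'$.} The natural candidate is $l'=l-j$, $m'=m-j+? $ — but that overshoots; rather, note $l'+m' = t = (l-j)+(m-j)+j$, so exactly one copy of $j$ must be re-added to the disjoint pair $(l-j,m-j)$. The idea is to add $j$ back to whichever of $l-j$ or $m-j$ keeps things binary-disjoint, using the digit information about $j$. I would argue that since $j\le\min\{l-j, m-j\}+j$... more carefully: I expect one shows that $j$ can be absorbed into, say, $l-j$ (giving $l'=l$, $m'=m-j$) precisely when $j\perp(l-j)$, and into $m-j$ (giving $l'=l-j$, $m'=m$) when $j\perp(m-j)$; and that at least one of these disjointness conditions must hold, forced by $(l-j)\perp(m-j)$ together with $j\perp(\rned{(l+m)/2}-j)$. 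Indeed $(l-j)+(m-j)+2j=l+m$, and $\rned{(l+m)/2}$ governs the carry structure when $l+m$ is split; a careful digit-by-digit (Kummer's-theorem style) analysis of the carries in the additions $(l-j)+(m-j)$, $(l-j)+j$, $(m-j)+j$ should pin down that the obstruction to \emph{both} absorptions failing contradicts the oddness of $\binom{\rned{(l+m)/2}}{j}$.

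After establishing this, verifying the conclusion is routine: in the first case $(l',m')=(l,\,m-j)$ satisfies $l'\le l$, $m'\le m$, $l'+m'=l+m-j=t$, and $l'\perp m'$ gives $Q_t^{(l',m')}=\binom{t}{l'}\equiv1$ by Lucas; the second case is symmetric. I would also separately handle the degenerate cases $j=0$ (nothing to do, take $(l',m')=(l,m)$) and $l=m$ (where~\eqref{diag} forces $t=l$ and $j=l$, and one takes $l'=l$, $m'=0$ using~\eqref{verge}), as sanity checks and possibly as base cases.

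The main obstacle I anticipate is precisely the combinatorial lemma in the middle paragraph: showing that the two disjointness conditions $j\perp(l-j)$ and $j\perp(m-j)$ cannot both fail under the hypotheses. This is where the somewhat delicate interplay between $\rned{(l+m)/2}$ and the carries of $(l-j)+(m-j)$ enters, and it may require splitting on the parity of $l+m$ (since $\rned{(l+m)/2}=(l+m)/2$ or $(l+m-1)/2$) and tracking a possible low-order carry. Everything else is bookkeeping with Lucas' theorem.
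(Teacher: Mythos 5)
Your opening reduction is sound and coincides with the paper's starting point: with $j=l+m-t$ one has $Q_t^{(l,m)}\equiv\binom{t+j}{l+j}\binom{l+j}{2j}$, so by Lucas/Kummer the hypothesis $Q_t^{(l,m)}\equiv1$ says exactly that $l-j$, $m-j$ and $2j$ have pairwise disjoint binary expansions; and for a pair on the antidiagonal, $Q_t^{(l',m')}=\binom{t}{l'}$ is odd iff $l'\perp m'$. The gap is the dichotomy you single out as the core step, and it is not merely unproven -- it is false. You claim that the whole defect $j$ can always be absorbed into one side, i.e.\ that one of the pairs $(l,m-j)$ or $(l-j,m)$ always works. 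Take $l=9$, $m=6$, $t=10$, so $j=5$: then $Q_{10}^{(9,6)}=\binom{5}{4}\binom{7}{5}=105\equiv1$ (indeed $l-j=4$, $m-j=1$, $2j=10$ are pairwise disjoint, and the binary digits of $j=5$ lie among those of $\rned{\frac{l+m}{2}}=7$), yet $Q_{10}^{(9,1)}=\binom{10}{9}\equiv0$ and $Q_{10}^{(4,6)}=\binom{10}{4}\equiv0$. The only pair satisfying the conclusion here is $(l',m')=(8,2)$: one unit of the defect must be taken off $l$ and four units off $m$. So any argument that only considers the two extreme candidates cannot close the proof; in general $j$ must be split between the two coordinates. (A minor further slip: the condition for $(l,m-j)$ to work is $l\perp(m-j)$, not $j\perp(l-j)$.)

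This is precisely where the paper's proof takes a different shape: it never tries to reach the antidiagonal in one jump, but sets up an induction on $j$, at each step subtracting a single power $2^a$ from $j$ and from \emph{either} $l$ \emph{or} $m$, the choice being dictated by a case analysis on divisibility (whether $2^{s+1}\mid t$ and whether $2^{a+1}\mid l+j$, where $2^s\le j<2^{s+1}$; under $2^s\mid t$ it first proves $j$ must be the block $2^{s+1}-2^a$), together with a separate induction on $t$ disposing of the case $2^s\nmid t$. Your counterexample lies in that last regime ($s=2$, $4\nmid 10$), which is exactly where one-sided absorption breaks down. To salvage your plan you would need to prove the existence of a splitting $j=j_1+j_2$ with $j_1\le l$, $j_2\le m$ and $(l-j_1)\perp(m-j_2)$, and establishing that is essentially the whole content of the paper's case analysis rather than a piece of bookkeeping.
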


The remainder of this subsection is dedicated to the proof of Proposition~\ref{backdiag}.
The setup is the following: Assume that $Q_t^{(l,m)}\equiv1$ and $l+m-t=j>0$ (so that
$P_j^{(l,m)}\equiv1$). Then we need to show that there exist $l'\le l$ and $m'\le m$ such that
$Q_t^{(l',m')}\equiv1$ and $j'=l'+m'-t<j$, whence the statement of
Proposition~\ref{backdiag} follows by induction. Observe that if $l+m=t+j$ then
\begin{equation}
  Q_t^{(l,m)}=\binom{t-j}{l-j}\binom{\rned{\frac{t+j}{2}}}{j}\equiv
  \binom{t-j}{l-j}\binom{t+j}{2j} = \binom{t+j}{l+j}\binom{l+j}{2j} \,,
\end{equation}
hence $Q_t^{(l,m)}\equiv1$ if and only if $[t+j]_i\ge[l+j]_i\ge[2j]_i$ for all $i\in\N$.

Let $s=\max\{\sigma\in\N \mid [j]_\sigma=1\}$, \ie, $s$ is the highest number such that
$j\ge2^s$.

\begin{lma}
  If $2^s \nmid t$ then there exist $l'\le l$ and $m'\le m$ such that $l'+m'=t$ and
  $Q_t^{(l',m')}\equiv1$.
\end{lma}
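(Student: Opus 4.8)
The plan is to reformulate the conclusion in terms of binary expansions. Write $u\preceq v$ to mean $[u]_i\le[v]_i$ for all $i$, i.e.\ $u$ is a binary submask of $v$. Since the condition $l'+m'=t$ says that the defect $l'+m'-t$ vanishes, the displayed congruence just before the lemma (equivalently, Lucas' theorem applied to $Q_t^{(l',m')}=\binom{t}{l'}$) shows that $Q_t^{(l',m')}\equiv1$ is then equivalent to $l'\preceq t$, and in that case $m'=t-l'$ is forced and automatically satisfies $m'\preceq t$. So the lemma asks for a binary submask $l'$ of $t$ with $t-m\le l'\le l$. The extreme cases are trivial: take $l'=t$ if $l\ge t$, and $l'=0$ if $m\ge t$. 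Hence I may assume $l,m<t$; since $2j\preceq l+j$ and $2j\preceq m+j$ give $l,m\ge j$, we in fact have $j<l,m<t$, and the admissible values for $l'$ form the interval $[\,t-m,\,l\,]$ of exactly $j+1$ consecutive integers.

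Next I would unpack what $Q_t^{(l,m)}\equiv1$ gives, writing $c=t-j$, $a=l-j$, $b=m-j$. From $Q_t^{(l,m)}\equiv\binom{t-j}{l-j}\binom{t+j}{2j}$ together with the mod-$2$ rule $\binom{u+v}{u}\equiv1\iff u\perp v$, one reads off that $a\perp b$ with $a\sqcup b=c$, and that $c\perp 2j$. In particular $a\preceq c$, both $a$ and $b$ are disjoint from $2j$, $t-m=a$, and $\binom{t}{c}=\binom{c+j}{c}$, so that $c\preceq t$ precisely when $c\perp j$. This already settles the case $c\perp j$: then $a\preceq c\preceq t$, so $l'=a$ does the job.

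For the remaining case I would try the candidate $l'=t\wedge l$ (bitwise AND, the largest common submask), with $m'=t-l'=t\wedge\lnot l$: this is a submask of $t$ lying below $l$, and the only point to verify is $m'\le m$, which --- using $m=t-l+j$ --- is equivalent to the numerical inequality $l\wedge\lnot t\le j$. To prove this I would analyse the additions $l=a+j$ and $t=c+j$ bit by bit. Because $a\perp 2j$ and $c\perp 2j$, a carry arising while adding $j$ can only run along a block of consecutive $1$'s of $j$, hence never reaches a position above $s+1$; combining this with $a\preceq c$ shows $[l]_i\le[t]_i$ for every $i\ge s+1$, so $l\wedge\lnot t$ is supported on positions $\le s$. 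It then remains to use the hypothesis $2^s\nmid t$ --- which provides a bit of $t$ strictly below position $s$ --- to see that this low tail is in fact a submask of $j$, whence $l\wedge\lnot t\le j$. (Symmetrically, $l'=t\wedge\lnot m$ works whenever $m\wedge\lnot t\le j$, so it would suffice to show these two inequalities cannot both fail under the hypothesis.)

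The step I expect to be the main obstacle is this last bit-bookkeeping: following the carries through $l=a+j$ and $t=c+j$, subject to the rigid constraints $c\perp 2j$ and $c=a\sqcup b$, to control the low bits of $l\wedge\lnot t$. The difficulty is combinatorial rather than conceptual, and the hypothesis $2^s\nmid t$ is used exactly as the guarantee that $t$ carries a low bit with which to absorb the discrepancy.
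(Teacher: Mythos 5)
Your reduction is correct as far as it goes: on the diagonal $l'+m'=t$ one has $Q_t^{(l',m')}=\binom{t}{l'}$, so by Lucas the problem is indeed to find a binary submask of $t$ in the interval $[t-m,\,l]$; the boundary cases and the case $c\perp j$ (in your notation $c=t-j$, $a=l-j$, $b=m-j$, with $a\perp b$, $a+b=c$, $c\perp 2j$) are handled correctly, and your carry analysis giving $[l]_i\le[t]_i$ for $i\ge s+1$ is believable. But the proof has a genuine gap, and it sits exactly where the lemma's hypothesis must do its work. In the remaining case you only establish $l\wedge\lnot t<2^{s+1}$, whereas what is needed is $l\wedge\lnot t\le j$, and $j$ may be as small as $2^s$; the step that closes this gap ("the low tail is in fact a submask of $j$") is not proved -- you yourself label it "the main obstacle" and leave the bit-bookkeeping undone. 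Worse, you concede that the candidate $l'=t\wedge l$ may fail and that one would then have to show that $l\wedge\lnot t\le j$ and $m\wedge\lnot t\le j$ cannot both fail under $2^s\nmid t$, again with no argument. So the hypothesis $2^s\nmid t$, which is the entire content of the lemma, never actually enters a completed deduction; what you have is a plausible plan, not a proof.

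For comparison, the paper argues quite differently: by induction on $t$, using the path recurrence behind $\omega$, it shows that if there is a point on the diagonal $\lambda+\mu=t-1$ inside the box but none on the diagonal $l'+m'=t$, then $Q_{t-1}^{(\lambda,\mu)}\equiv\binom{t-1}{\lambda}\equiv1$ for \emph{all} admissible $\lambda$, i.e.\ $\binom{t-1}{l-r}\equiv1$ for $r\in\{0,\dots,j+1\}$; since $j\ge 2^s$, one of the $j+1$ consecutive integers $l-r+1$ is divisible by $2^s$, and Lucas then forces $[t-1]_i=1$ for all $i<s$, i.e.\ $2^s\mid t$ -- contradicting the hypothesis. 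If you want to keep your submask formulation, you must supply the missing carry argument (or the missing "not both fail" dichotomy); otherwise the inductive route via the recurrence is the cleaner way to make the hypothesis bite.
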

\begin{proof}
Since $Q_t^{(l,m)}\equiv1$, we have $Q_{t-1}^{(l,m-1)}\equiv1$ or
$Q_{t-1}^{(l-1,m)}\equiv1$ or
$Q_{t-1}^{(l,m-2)}\equiv Q_{t-1}^{(l-1,m-1)}\equiv Q_{t-1}^{(l-2,m)}\equiv1$.
By induction, we may assume that there exists $\l_0\le l$ and $\mu_0\le m$ such that
$\l_0+\mu_0=t-1$ and $Q_{t-1}^{(\l_0,\mu_0)}\equiv1$. Applying $\omega$, we see that
either $Q_{t-1}^{(\l,\mu)}\equiv1$ 
for all $\l\le l$, $\mu\le m$ such that $\l+\mu=t-1$, or otherwise there exist
$l'\le l$, $m'\le m$ satisfying $l'+m'=t$ and $Q_t^{(l',m')}\equiv1$.

Suppose $Q_{t-1}^{(\l,\mu)}\equiv1$ for all $\l\le l$, $\mu\le m$ such that
$\l+\mu=t-1$. Observe that $Q_{t-1}^{(\l,\mu)}\equiv\binom{t-1}{\l}$ whenever
$\l+\mu=t-1$, so the above implies that
$\binom{t-1}{l-r}\equiv1$ for all $r\in\{0,\ldots,j+1\}$.
By Lucas' theorem, $\binom{t-1}{l-r}\equiv1$ is equivalent to $[t-1]_i\ge[l-r]_i$ 
for all $i\in\N$. 

Since $j\ge2^s$, we have $2^s\mid (l-r+1)$ for some $r\le j$, that is, $[l-r]_i=1$ for all
$i\in\{0,s-1\}$. 
Now $[t-1]_i\ge[l-r]_i$ implies that $[t]_i=[(t-1)+1]_i=0$  for all $i\le s-1$, \ie,
$2^s\mid t$. The result follows.
\end{proof}

From here on, we assume $2^s\mid t$.

\begin{lma} \label{jlemma}
  \begin{enumerate}
  \item $j=2^{s+1}-2^a$ for some $a\in\{0,\ldots,s\}$,
  \item $2^{s+1}\mid  t$ or $j=2^s$.
  \end{enumerate}
\end{lma}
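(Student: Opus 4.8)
The plan is to reduce the two hypotheses to a single arithmetic condition and then argue entirely with binary digits. From $Q_t^{(l,m)}\equiv1$ and the characterisation recorded above, $[t+j]_i\ge[l+j]_i\ge[2j]_i$ for all $i$, so in particular $[t+j]_i\ge[2j]_i$, i.e.\ $\binom{t+j}{2j}\equiv1$, i.e.\ $2j\perp(t-j)$. One also has $t\ge j$: by \eqref{notime} $l,m\le t$, so $l+m=t+j$ gives $l,m\ge j$ and hence $t=l+m-j\ge j$. Besides $2^s\mid t$ and $2^s\le j<2^{s+1}$, the argument uses only $2j\perp(t-j)$.

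Write $j=2^s+j'$ with $0\le j'<2^s$, and $t=2^su$. If $j'=0$ then $j=2^s=2^{s+1}-2^s$ and both assertions hold with $a=s$; so assume $j'>0$. Then $t\ge j>2^s$ together with $2^s\mid t$ forces $u\ge2$, and since $0<2^s-j'<2^s$ we may write $t-j=2^s(u-1)-j'=2^s(u-2)+(2^s-j')$; thus the digits of $t-j$ in positions $<s$ are those of $2^s-j'$, while $[t-j]_s=[u-2]_0$. For part~1 put $a=\nu(j')$ and suppose $j'\ne2^s-2^a$. Since $[j']_a=1$, there is a least index $b$ with $a<b\le s-1$ and $[j']_b=0$, and then $[j']_i=1$ for $a\le i\le b-1$. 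Now $[2j]_b=[j]_{b-1}=[j']_{b-1}=1$ (using $[2j]_i=[j]_{i-1}$ and $[j]_i=[j']_i$ for $i<s$); on the other hand the digit pattern of $j'$ gives $j'\equiv2^b-2^a\pmod{2^{b+1}}$, hence $2^s-j'\equiv2^a+2^b\pmod{2^{b+1}}$ (as $b+1\le s$), and $0\le2^a+2^b<2^{b+1}$ forces $[2^s-j']_b=1$; since $b<s$, also $[t-j]_b=1$. So bit $b$ is set in both $2j$ and $t-j$, contradicting $2j\perp(t-j)$. Hence $j'=2^s-2^a$, i.e.\ $j=2^{s+1}-2^a$ with $a\le s-1$, and together with the case $j'=0$ this proves part~1.

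For part~2, assume $j\ne2^s$. By part~1, $j'=2^s-2^a$ with $a\le s-1$, so $2^s-j'=2^a<2^s$ and $[j']_{s-1}=1$. Then $[2j]_s=[j]_{s-1}=1$, whereas $[t-j]_s=[u-2]_0$ by the decomposition of the previous paragraph. As $2j\perp(t-j)$ forces $[t-j]_s=0$, we obtain $[u-2]_0=0$, i.e.\ $u$ is even, i.e.\ $[t]_s=[u]_0=0$; combined with $2^s\mid t$ this gives $2^{s+1}\mid t$.

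I do not expect a genuine obstacle: once the condition $2j\perp(t-j)$ is isolated, everything is elementary. The only delicate point is the borrow that occurs when subtracting $j'$ from $2^s$, handled here via the congruence $j'\equiv2^b-2^a\pmod{2^{b+1}}$, together with the bookkeeping of whether each relevant binary digit of $2j$ and of $t-j$ lies below, at, or above position $s$.
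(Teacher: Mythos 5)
Your proof is correct and follows essentially the same route as the paper: both extract $\binom{t+j}{2j}\equiv 1$ from $Q_t^{(l,m)}\equiv 1$ and analyse its binary digits under the standing assumption $2^s\mid t$. You phrase the condition as disjointness of $2j$ and $t-j$ and track the borrow in $2^s-j'$, whereas the paper reads off $[j]_i\ge[j]_{i-1}$ for $i<s$ (and then $[t]_s=0$ when $j\ne 2^s$) directly from Lucas' theorem, but the substance is the same.
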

\begin{proof}
1. The statement amounts to that $j=\sum_{i=a}^s2^i$. 
Since $Q_t^{(l,m)}\equiv1$, we have $\binom{t+j}{2j}\equiv1$ and, by Lucas' theorem,
$$\binom{t+j}{2j}\equiv \prod_i\binom{[t+j]_i}{[2j]_i}=
\prod_{i=0}^{s-1}\binom{[j]_i}{[j]_{i-1}}\prod_{i\ge s}\binom{[t+j]_i}{[j]_{i-1}} \,.
$$
In particular, this implies $[j]_i\ge[j]_{i-1}$ for all $i\in\{1,\ldots,s-1\}$. 
In addition, since $[j]_s=1$ by assumption, we get $j=\sum_{i=a}^s2^i$ for some
$a\le s$.

2. If $j\ne2^s$ then $a<s$, implying that
$$1=[j]_{s-1}=[2j]_s\le[t+j]_s=1-[t]_s$$
that is, $[t]_s=0$. Hence $2^{s+1}\mid t$. 
\end{proof}

\begin{lma} 
  If $2^{s+1}\mid t$ then 
  \begin{align*}
    [t]_{s+1}=[l+j]_r&=1 &&\mbox{for all}\quad r\in\{a+1,\ldots,s+1\}, \quad\mbox{and} \\
     [l+j]_r&=0         &&\mbox{for all}\quad r\in\{0,\ldots,a-1\} \,.
\end{align*}
\end{lma}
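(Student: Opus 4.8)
The plan is to work throughout under the standing assumptions of this subsection: $Q_t^{(l,m)}\equiv 1$, $l+m=t+j$ with $j>0$, $2^{s+1}\mid t$ where $s=\max\{\sigma\mid[j]_\sigma=1\}$, and (from Lemma~\ref{jlemma}) $j=2^{s+1}-2^a=\sum_{i=a}^s 2^i$ for some $a\in\{0,\ldots,s-1\}$ (the case $j=2^s$, \ie\ $a=s$, being the alternative of Lemma~\ref{jlemma}:2, so here $a\le s-1$). Recall from equation~(8) that $Q_t^{(l,m)}\equiv 1$ is equivalent to $[t+j]_i\ge[l+j]_i\ge[2j]_i$ for all $i\in\N$. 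So all the information I have is these two chains of digit inequalities, and the goal is purely a statement about the binary digits of $t$ and of $l+j$.

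First I would read off the binary expansion of $2j$: since $2j=\sum_{i=a+1}^{s+1}2^i$, we have $[2j]_i=1$ exactly for $i\in\{a+1,\ldots,s+1\}$ and $[2j]_i=0$ otherwise. The inequality $[l+j]_i\ge[2j]_i$ then immediately forces $[l+j]_r=1$ for all $r\in\{a+1,\ldots,s+1\}$, which is the middle assertion. Next, the upper chain $[t+j]_i\ge[l+j]_i$ applied at $i=s+1$ gives $[t+j]_{s+1}=1$. Now I use $2^{s+1}\mid t$: this means $[t]_i=0$ for all $i\le s+1$... wait, it means $[t]_i=0$ for $i\le s$ only; but $2^{s+1}\mid t$ says $[t]_i=0$ for $i=0,\ldots,s$. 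Then $t+j$ in binary: since $j<2^{s+1}$ and $2^{s+1}\mid t$, there is no carry between the low $s+1$ bits and the higher bits, so $[t+j]_i=[j]_i$ for $i\le s$ and $[t+j]_i=[t]_i$ for $i\ge s+1$. Hence $[t+j]_{s+1}=[t]_{s+1}$, and combined with $[t+j]_{s+1}=1$ we get $[t]_{s+1}=1$, which is the first assertion.

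It remains to prove $[l+j]_r=0$ for $r\in\{0,\ldots,a-1\}$. Here I would use the lower half of the digit chain at the low bits, $[l+j]_r\ge[2j]_r=0$, which gives nothing, so instead I must extract more from the setup — the point is to use the full hypothesis $Q_t^{(l,m)}\equiv1$ in its second binomial form. From equation~(8), $Q_t^{(l,m)}\equiv\binom{t+j}{l+j}\binom{l+j}{2j}$, and we also have the symmetric statement in $m$: interchanging $l$ and $m$ gives $[t+j]_i\ge[m+j]_i\ge[2j]_i$. Since $(l+j)+(m+j)=t+3j$, and I know the low bits of $t+j$ are exactly those of $j=\sum_{i=a}^s2^i$, I would analyse the addition $(l+j)+(m+j)$ at the bottom $a$ positions: both $[l+j]_r$ and $[m+j]_r$ must be $\le[t+j]_r$, but more usefully, I track carries. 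The cleanest route: the hypothesis also yields $Q_t^{(l,m)}\equiv\binom{t-j}{l-j}\binom{\lfloor(t+j)/2\rfloor}{j}$ from~(7), and since $2^{s+1}\mid t$ and $j=\sum_{i=a}^s2^i$, one computes $\lfloor(t+j)/2\rfloor = t/2 + (j-[j]_0)/2$ has a transparent binary expansion, forcing $\binom{t-j}{l-j}\equiv1$, \ie\ $[t-j]_i\ge[l-j]_i$ for all $i$. Now $t-j$: borrowing from bit $s+1$ of $t$ down through the zero bits $s,s-1,\ldots$, one finds $[t-j]_i=1$ for $i\in\{a,\ldots,s\}$, $[t-j]_i=0$ for $i<a$, and $[t-j]_{s+1}=0$. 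Then $[t-j]_r\ge[l-j]_r$ at $r<a$ forces $[l-j]_r=0$ for $r<a$; since $j$ has no bits below position $a$, $[l-j]_r=[l]_r$ for $r<a$, and likewise $[l+j]_r=[l]_r$ for $r<a$, giving $[l+j]_r=0$ for $r\in\{0,\ldots,a-1\}$ as required.

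**Main obstacle.** The routine carry-bookkeeping in subtractions like $t-j$ and additions like $t+j$ is where all the care lies; the genuinely delicate point is the last one — legitimately deducing $\binom{t-j}{l-j}\equiv1$ (equivalently controlling the low bits of $l$) from the hypothesis, since the naive chain $[l+j]_r\ge[2j]_r$ is vacuous at the bottom $a$ positions. I expect to need both expressions for $Q_t^{(l,m)}$ in~(7) together with $2^{s+1}\mid t$ and the explicit form $j=\sum_{i=a}^s2^i$ to pin down those low bits; everything else is digitwise inequality chasing via Lucas' theorem.
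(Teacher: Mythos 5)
Your proposal is correct in substance, but for the second assertion it takes a detour where the paper has a one-line argument. For $[t]_{s+1}=1$ and $[l+j]_r=1$ ($a+1\le r\le s+1$) you argue exactly as the paper does: $2j=\sum_{i=a+1}^{s+1}2^i$ together with $[l+j]_i\ge[2j]_i$, plus the carry-free splitting $[t+j]_i=[j]_i$ for $i\le s$ and $[t+j]_i=[t]_i$ for $i\ge s+1$. For $[l+j]_r=0$ ($r<a$), however, you dismissed the digit chain too quickly: you only tested the lower inequality $[l+j]_r\ge[2j]_r$, whereas the upper inequality settles it immediately, since for $r<a$ the same disjointness of the expansions of $t$ and $j$ gives $[t+j]_r=[j]_r=0$, hence $[l+j]_r\le[t+j]_r=0$. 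That is the paper's entire proof of this part. Your substitute route — extracting $\binom{t-j}{l-j}\equiv1$ from the other factorisation $Q_t^{(l,m)}\equiv\binom{t-j}{l-j}\binom{\rned{\frac{t+j}{2}}}{j}$ (note that oddness of the product already forces both factors odd; no inspection of the second factor is needed), computing the low bits of $t-j$, and using $[l-j]_r=[l]_r=[l+j]_r$ for $r<a$ — is valid and reaches the same conclusion, just with extra carry bookkeeping. Two slips in the write-up, neither fatal: (i) you restrict to $a\le s-1$, but Lemma~\ref{jlemma}:2 is an inclusive disjunction, so $j=2^s$ (i.e.\ $a=s$) is not excluded by the hypothesis $2^{s+1}\mid t$; your argument happens to work verbatim in that case, so nothing is lost, but the parenthetical justification is a misreading. (ii) Your binary expansion of $t-j$ is wrong in the middle range: writing $t=2^{s+1}t'$ with $t'$ odd, one gets $t-j=2^{s+1}(t'-1)+2^a$, so $[t-j]_a=1$ and $[t-j]_i=0$ for all other $i\le s+1$, rather than $1$ throughout $\{a,\ldots,s\}$; since you only use the (correct) vanishing of the bits below $a$, the conclusion stands.
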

\begin{proof}
If $2^{s+1}\mid t$ then the binary expansions of $j$ and $t$ are disjoint, so
$[t+j]_i=[j]_i$ if  $i\le s$ and $[t+j]_i=[t]_i $ if $i>s$.
The result then follows from the inequality $[t+j]_i\ge[l+j]_i\ge[2j]_i$, using
Lemma~\ref{jlemma}:1. 
\end{proof}

We now have all tools needed to prove Proposition~\ref{backdiag}. We will proceed in three
separate cases, namely:
\begin{enumerate}
\item $2^{s+1}\mid t$,\quad $2^{a+1}\mid l+j$,
\item $2^{s+1}\mid t$,\quad $2^{a+1}\nmid l+j$,
\item $2^{s+1}\nmid t$.
\end{enumerate}

\subsection{The case $2^{s+1}\mid t$ and $2^{a+1}\mid l+j$}

Set $l'=l-2^a$, $j'=j-2^a$ and $m'=t+j'-l'$. Now
\begin{align*} \binom{t+j'}{l'+j'}&=\binom{t+j-2^a}{l+j-2^{a+1}}\equiv1
  \,,&\mbox{since}&\quad  
\begin{cases} [t+j]_a=1,\;[l+j]_a=0, \\ [l+j]_{a+1}=1, \end{cases} \\
\intertext{and}
\binom{l'+j'}{2j'}&=\binom{l+j-2^{a+1}}{2j-2^{a+1}}\equiv1 \,,& \mbox{since}&\quad
[2j]_{a+1}=[l+j]_{a+1}=1,
\end{align*}
and, consequently, $Q_t^{(l',m')}\equiv1$.

\subsection{The case $2^{s+1}\mid t$, $2^{a+1}\nmid l+j$}

Here taking $l'=l$, $j'=j-2^a$, we get
\begin{align*} \binom{t+j'}{l'+j'}&=\binom{t+j-2^a}{l+j-2^a}\equiv1
  \,,&\mbox{since}&\quad  [t+j]_a=1=[l+j]_a, \\
\intertext{and}
\binom{l'+j'}{2j'}&=\binom{l+j-2^a}{2j-2^{a+1}}\equiv1 \,,& \mbox{since}&\quad
[2j]_{a+1}=1.
\end{align*}
Hence $Q_t^{(l',m')}\equiv1$ for $m'=t+j'-l'$.

\subsection{The case $2^{s+1}\nmid t$}

Since $2^{s+1}\nmid t$, we have $[t]_s=1$ and, by Lemma~\ref{jlemma}:2, $j=2^s$. 
This implies $[t+j]_r=[l+j]_r=0$ for all $r\le s$, and $[t+j]_{s+1}=[l+j]_{s+1}=1$.
Hence $[t]_{s+1}=[l]_{s+1}=0$, and $[t]_r=[t+j]_r$, $[l]_r=[l+j]_r$ for $r>s+1$. It
follows that $\binom{t}{l}\equiv1$, that is, $Q_t^{(l,t-l)}\equiv1$. 
\vspace{2ex}

This concludes the proof of Proposition~\ref{backdiag}.

\section{Solution to the uniserial case} \label{soluniser}

\begin{lma} \label{specials}
Let $l,m,s,t\in\N$.               
\begin{enumerate}
\item If $Q_{2^s}^{(l,m)}\equiv 1$ then $l=2^s$ or $m=2^s$. \label{2s}
\item $Q_{2^s-1}^{(l,m)}\equiv1$ for all $(l,m)$ such that $l+m=2^s-1$. 
  \label{2s-1}
\item If $l,m< 2^s\le t$ then $Q_t^{(l,m)}=0$. \label{ge2s}
\end{enumerate}
\end{lma}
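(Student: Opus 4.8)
The plan is to prove all three parts directly from the closed formula $Q_t^{(l,m)}=\binom{2t-l-m}{t-m}\binom{\rned{(l+m)/2}}{l+m-t}$ of Lemma~\ref{Qlma}, together with Lucas' theorem for $p=2$. Throughout I will use the reformulation from Section~\ref{bdlma}: writing $j=l+m-t$ (so $0\le j$ whenever $Q_t^{(l,m)}\ne0$, by \eqref{notime}), one has $Q_t^{(l,m)}\equiv\binom{t-j}{l-j}\binom{\rned{(t+j)/2}}{j}$, and moreover $Q_t^{(l,m)}\equiv1$ forces $[t+j]_i\ge[l+j]_i\ge[2j]_i$ for all $i$. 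Part~\ref{2s-1} is the easiest: if $t=2^s-1$ and $l+m=2^s-1$, then $j=0$, so $Q_t^{(l,m)}\equiv\binom{t}{l}\equiv\binom{2^s-1}{l}$, and since every binary digit of $2^s-1$ equals~$1$ in positions $0,\dots,s-1$, Lucas' theorem gives $\binom{2^s-1}{l}\equiv1$ for every $0\le l\le 2^s-1$.

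For part~\ref{2s}, suppose $Q_{2^s}^{(l,m)}\equiv1$ with $t=2^s$, and set $j=l+m-t$ as above. By \eqref{notime} we have $\max\{l,m\}\le 2^s$, so it suffices to rule out $l,m<2^s$. If $j=0$ then $l+m=2^s$ and $Q_t^{(l,m)}\equiv\binom{2^s}{l}$, which by Lucas is $1$ only when $l\in\{0,2^s\}$, i.e. $l=2^s$ or $m=2^s$; so assume $j\ge1$. Let $s'=\max\{i\mid[j]_i=1\}$, so $j\ge 2^{s'}$. Applying Lemma~\ref{jlemma} (whose hypotheses $Q_t^{(l,m)}\equiv1$ and $2^{s'}\mid t$ hold here, the latter because $t=2^s$ and $s'\le s$): part~1 gives $j=2^{s'+1}-2^a$ for some $a\le s'$, and part~2 gives $2^{s'+1}\mid t$ or $j=2^{s'}$. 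Since $t=2^s$, the condition $2^{s'+1}\mid t$ holds unless $s'=s$. If $s'=s$ then $j\ge 2^s=t$, forcing $l+m\ge 2t$, hence $l=m=t=2^s$, contradiction. If $s'<s$, then $2^{s'+1}\mid t$, and the last displayed lemma in Section~\ref{bdlma} applies, giving $[l+j]_r=1$ for $r\in\{a+1,\dots,s'+1\}$ and in particular $[t+j]_{s'+1}=1$; but $t+j=2^s+j$ with $j<2^{s'+1}\le 2^s$, so $[2^s+j]_{s'+1}=[j]_{s'+1}=0$, a contradiction. Hence $j\ge1$ is impossible, and $l=2^s$ or $m=2^s$.

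For part~\ref{ge2s}, assume $l,m<2^s\le t$ and suppose for contradiction $Q_t^{(l,m)}\ne0$; then $l+m\le 2(2^s-1)<2t$, so $j=l+m-t<t-l$ and also $j\ge0$, and $t-j=2t-l-m\ge 2$. I want to force the binomial coefficients in $Q_t^{(l,m)}\equiv\binom{t-j}{l-j}\binom{\rned{(t+j)/2}}{j}$ to vanish mod~$2$. The key point is that $l-j=t-m$ and $m-j=t-l$, both of which are $\ge t-2^s+1>0$ but the relevant constraint is on binary digits: from $t\ge 2^s$ pick the digit $[t]_e=1$ with $e\ge s$ minimal among such that $2^{e}\le t$; since $l<2^s\le 2^e$, the inequality $[t+j]_i\ge[l+j]_i$ cannot absorb the carry structure, and I expect to conclude $\binom{2t-l-m}{t-l}\equiv0$. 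Concretely, I would argue: $t-l=m-j$ and $t-m=l-j$ are both strictly smaller than $t$ but $2t-l-m=(t-l)+(t-m)$; writing out that $(t-l)+(t-m)=2t-l-m$ has a binary digit in some position $\ge s+1$ (since $2t-l-m>2^s$) while neither summand does alone forces a carry, making $\binom{(t-l)+(t-m)}{t-l}\equiv0$ by Lucas/Kummer. This last step — pinning down the carry in the addition $(t-l)+(t-m)$ using only $l,m<2^s\le t$ — is the main obstacle; I would handle it by Kummer's theorem (the $2$-adic valuation of $\binom{r+s'}{r}$ equals the number of carries in adding $r$ and $s'$ in base~$2$), noting that adding $t-l$ and $t-m$ produces a result exceeding $2^s$ while each addend, together with $l$ resp.\ $m$, must reproduce $t<2^{s+1}\cdot(\text{something})$, forcing at least one carry.
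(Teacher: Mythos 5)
Your part 2 is fine and coincides with the paper's argument, and your strategy for part 1 (reusing Lemma~\ref{jlemma} and the subsequent lemma of Section~\ref{bdlma} instead of quoting Proposition~\ref{backdiag}) is workable, but as written it has a hole: the contradiction you draw in the case $s'<s$ uses the identity $[2^s+j]_{s'+1}=[j]_{s'+1}$, which fails exactly when $s'=s-1$, since then $j<2^{s'+1}=2^s$ gives $[2^s+j]_s=1$ while $[j]_s=0$; the lemma only yields $[t+j]_{s'+1}\ge[l+j]_{s'+1}=1$, so nothing contradictory follows in that subcase, and nothing earlier in your argument excludes $2^{s-1}\le j<2^s$. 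The subcase is repairable with the same tools: with $t=2^s$ and $j=2^s-2^a$ from Lemma~\ref{jlemma}, the digit inequalities $[t+j]_i\ge[l+j]_i\ge[2j]_i$ force $l+j\in\{2^{s+1}-2^{a+1},\,2^{s+1}-2^a\}$, hence $l=2^s$ or $m=2^s$, contradicting $l,m<2^s$ — but this extra step must be supplied. The paper sidesteps the whole case analysis by invoking Proposition~\ref{backdiag} directly: it gives $l'\le l$, $m'\le m$ with $l'+m'=2^s$ and $\binom{2^s}{l'}\equiv1$, so $l'\in\{0,2^s\}$, whence $l\ge2^s$ or $m\ge2^s$, and (\ref{notime}) forces equality.

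The serious gap is part 3, where you yourself flag the key step as unresolved; moreover, the statement you plan to prove there is false. You aim to produce a carry in the addition $(t-l)+(t-m)$, i.e.\ to show $\binom{2t-l-m}{t-l}\equiv0$, but for $l=2$, $m=3$, $s=2$, $t=4$ one has $t-l=2$, $t-m=1$, no carry, and $\binom{3}{1}=3$ is odd; here $Q_4^{(2,3)}=6\equiv0$ only because the \emph{second} factor $\binom{\rned{5/2}}{1}=2$ is even. So any correct argument must control the product of both binomial factors, and the Kummer-style route you sketch cannot be completed as stated. The paper argues differently, via the path interpretation of Lemma~\ref{Qlma}: writing $Q_t^{(l,m)}=\sum_{\l,\mu}Q_{2^s}^{(\l,\mu)}\tilde{Q}_{t-2^s}^{(\l,\mu),(l,m)}$, part 1 shows every term with $Q_{2^s}^{(\l,\mu)}$ odd has $\l=2^s>l$ or $\mu=2^s>m$, so the continuation count vanishes and $Q_t^{(l,m)}\equiv0$ (note the assertion can only be meant modulo $2$: $Q_2^{(1,1)}=2\ne0$). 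You would need either this kind of concatenation argument, or a genuinely new digit argument addressing the product, to close part 3.
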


\begin{proof}
The second statement follows directly form the formula
$Q_{l+m}^{(l,m)}\equiv\binom{l+m}{l}$, since $[2^s-1]_i=1$ for all $i\le s-1$.

If $l+m=2^s$ then $Q_{2^s}^{(l,m)}\equiv0$ unless $l\in\{0,2^s\}$, that is,
unless either $l$ or $m$ equals $2^s$.
Proposition~\ref{backdiag} implies that if $l,m\in\N$ are any numbers satisfying
$Q_{2^s}^{(l,m)}\equiv1$, then there exist $l'\le l$ and $m'\le m$ such that $l'+m'=2^s$ and
$Q_{2^s}^{(l',m')}\equiv1$.  So either $l'=2^s$ or $m'=2^s$, giving $l\ge2^s$ or $m\ge2^s$
respectively. 
On the other hand, by \ref{notime}, $Q_{2^s}^{(l,m)}=0$ if either of $l$ and $m$ is
greater than $2^s$, thus $2^s\in\{l,m\}$.

For the third statement, recall that $Q_t^{(l,m)}$ is the number of paths in
$\Gamma$ from $(0,0)$ to $(l,m)$ of length $t$. From this description follows that, for
every $r\le t$, 
$$Q_t^{(l,m)} = \sum_{\l,\mu}Q_r^{(\l,\mu)} \cdot \tilde{Q}_{t-r}^{(\l,\mu),(l,m)}$$ 
where $\tilde{Q}_{t-r}^{(\l,\mu),(l,m)}$ denotes the number of paths in $\Gamma$ from
$(\l,\mu)$ to $(l,m)$ of length $t-r$.
Clearly $\tilde{Q}_{t-r}^{(\l,\mu),(l,m)}=0$ if either $\l>l$ or $\mu>m$.
Setting $r=2^s$ we have, by the first statement, that $Q_{2^s}^{(\l,\mu)}\equiv0$ unless
$\l=2^s$ or $\mu=2^s$, but in this case $\tilde{Q}_{t-2^s}^{(\l,\mu),(l,m)}=0$.
Consequently, $Q_t^{(l,m)}\equiv0$.
\end{proof}

The key to determining the Loewy lengths in the uniserial case lies in the following
proposition, which provides an easy method to compute $\ell(u\tp v)$ for homogeneous basis
elements $u\tp v$.

\begin{prop} \label{hashprod}
For all $l,m\in\N$ the identity 
$\max\{t\in\N \mid \omega^t(0,0)\notin I(l,m)\} = l\#m$
holds.
\end{prop}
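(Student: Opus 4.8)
The plan is to unwind both sides in terms of the coefficients $Q_t^{(l,m)}$ and use Proposition~\ref{backdiag} to reduce to the back diagonal. Recall that $\omega^t(0,0)=\sum_{i,j}Q_t^{(i,j)}(i,j)$, and that $\omega^t(0,0)\in I(l,m)$ precisely when $Q_t^{(i,j)}\equiv0$ for every $(i,j)$ with $i\le l$ and $j\le m$; thus the quantity on the left-hand side is $L:=\max\{t\mid Q_t^{(i,j)}\equiv1 \text{ for some } i\le l,\, j\le m\}$. By Proposition~\ref{backdiag}, whenever $Q_t^{(i,j)}\equiv1$ for some $i\le l$, $j\le m$, there are $i'\le i\le l$ and $j'\le j\le m$ with $i'+j'=t$ and $Q_t^{(i',j')}\equiv1$; since $Q_{i'+j'}^{(i',j')}\equiv\binom{i'+j'}{i'}$ by \eqref{j=0}, this says $i'\perp j'$. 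Conversely any such pair contributes $t=i'+j'$ to the set. Hence
\begin{equation*}
L=\max\{i'+j' \mid i'\le l,\ j'\le m,\ i'\perp j'\}\,,
\end{equation*}
and the proposition becomes the purely combinatorial identity $\max\{i'+j'\mid i'\le l,\ j'\le m,\ i'\perp j'\}=l\#m$.

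Next I would verify this combinatorial identity directly from the definition of $\#$. Write $s$ for the least index with $[l]_r+[m]_r\le1$ for all $r\ge s$, and $\lambda=\sum_{r\ge s}[l]_r$, $\mu=\sum_{r\ge s}[m]_r$, so $l\#m=\lambda+\mu+2^s-1$. For the inequality $L\le l\#m$: given $i'\le l$, $j'\le m$ with $i'\perp j'$, split each of $i',j'$ into its part below bit $s$ and its part from bit $s$ upward. The low parts together are at most $2^s-1+2^s-1$, but disjointness forces their sum to be at most $2^s-1$; the high part of $i'$ is a submask of $l$'s high part hence at most $\sum_{r\ge s}[l]_r2^r$, and likewise for $j'$, and since $l$ and $m$ have disjoint high parts these high contributions sum to at most $\sum_{r\ge s}([l]_r+[m]_r)2^r$. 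Bounding $\lambda,\mu$ crudely is not enough here, so I would instead argue that $i'+j'\le (\text{high bits of }l)+(\text{high bits of }m)+(2^s-1)$ and then observe that this bound is exactly $l\#m$ only after replacing the high-bit sums by $\lambda,\mu$ — that is, one must check the high parts contribute at most $\lambda+\mu$ when combined with the requirement $i'+j'$ maximal; the cleanest route is to show $l\#m$ itself is attained and is an upper bound simultaneously. For attainment: take $j'$ to consist of all high bits of $m$ together with the bits $0,1,\dots,s-1$ — i.e.\ $j'=\mu\cdot(\text{shifted})+2^s-1$, more precisely $j'=(\sum_{r\ge s}[m]_r2^r)\vee(2^s-1)$ — no, to keep $i'\perp j'$ I would instead set $j'$ to carry the low block $2^s-1$ and $i'$ to carry none of it, with $i'$ and $j'$ taking the respective high bits of $l$ and $m$; then $i'\le l$ and $j'\le m$ are immediate, $i'\perp j'$ holds because the high parts are disjoint and only $j'$ uses low bits, and $i'+j'=(\text{high }l)+(\text{high }m)+2^s-1$. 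Finally I would reconcile $(\text{high }l)+(\text{high }m)$ with $\lambda+\mu+$ (borrow terms) — in fact they are equal to $\lambda+\mu$ only in value-per-bit weighted sense, so the correct normalisation gives exactly $l\#m$; a short induction on $s$, or direct bit-by-bit comparison, closes this.

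The main obstacle I anticipate is precisely this last bookkeeping: matching the maximum of $i'+j'$ over disjoint submasks $i'\le l$, $j'\le m$ with the closed form $\lambda+\mu+2^s-1$. The subtlety is that above bit $s$ one should take the full high bits of $l$ for $i'$ and of $m$ for $j'$ (they are automatically disjoint there), while below bit $s$, where $l$ and $m$ may share bits, disjointness caps the combined contribution at $2^s-1$, which is achieved by loading all low bits onto one of the two numbers. I would present the $\le$ direction by this split and the $\ge$ direction by exhibiting the explicit extremal pair, taking care that the chosen $i'$, $j'$ are genuinely submasks of $l$, $m$ respectively. Everything else — the translation through $Q_t$, the appeal to Proposition~\ref{backdiag}, and the identity $Q_{i+j}^{(i,j)}\equiv\binom{i+j}{i}$ — is routine given the results already established.
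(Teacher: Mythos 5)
Your reduction of the statement to the combinatorial identity $\max\{i'+j'\mid i'\le l,\ j'\le m,\ i'\perp j'\}=l\#m$ is correct: the translation of $\omega^t(0,0)\notin I(l,m)$ into the parity of the coefficients $Q_t^{(i,j)}$ with $i\le l$, $j\le m$, the appeal to Proposition~\ref{backdiag} to push everything onto the back diagonal, and the observation that there $Q_{i'+j'}^{(i',j')}\equiv\binom{i'+j'}{i'}$ is odd exactly when $i'\perp j'$, are all sound. The gap is in your treatment of the identity itself, specifically the attainment direction. Your extremal pair takes $i'=\lambda$ (the bits of $l$ in positions $\ge s$) and $j'=\mu+(2^s-1)$, and you claim $i'\le l$, $j'\le m$ are immediate; the second is not, and in general it is false: for $l=m=2$ one has $s=2$, $\mu=0$, so your $j'=3>m$, while the maximum $3=l\#m$ is attained only by splitting the low block, e.g.\ $(i',j')=(2,1)$. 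Loading the whole block $2^s-1$ onto one factor works only when that factor's low part is already all ones. (Your upper bound survives, but not quite for the reason you give: when only $i'\le l$ is assumed, the high part of $i'$ need not be a submask of the high part of $l$; use instead $\lfloor i'/2^s\rfloor\le\lfloor l/2^s\rfloor$. Also, $\lambda$ and $\mu$ must be read as $\sum_{r\ge s}[l]_r2^r$ and $\sum_{r\ge s}[m]_r2^r$.)

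The missing idea that closes the attainment step is the minimality of $s$: if $s\ge1$ then $[l]_{s-1}=[m]_{s-1}=1$, so one may take $i'=\lambda+2^{s-1}\le l$ and $j'=\mu+2^{s-1}-1\le m$; these are disjoint (the high parts of $l$ and $m$ are disjoint by the definition of $s$, and $2^{s-1}$ is disjoint from $2^{s-1}-1$), and $i'+j'=\lambda+\mu+2^s-1=l\#m$; for $s=0$ take $(l,m)$ itself. With this repair your argument is complete and is genuinely more direct than the paper's, which never isolates the combinatorial identity: after the same reduction via Proposition~\ref{backdiag}, the paper proves $\tau(l,m)=l\#m$ by induction on the binary expansion, showing $\tau(r+2^a,s)=\tau(r,s)+2^a$ for $r,s<2^a$ from the congruence $Q_{\rho+\sigma}^{(\rho,\sigma)}\equiv Q_{\rho+\sigma+2^a}^{(\rho+2^a,\sigma)}$, handling the base case $2^{a-1}\le l,m<2^a$ with Lemma~\ref{specials}, and matching the recursion $l\#m=2^a+(l-2^a)\#m$.
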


\begin{proof} 
Set $\tau(l,m)=\max\{t\in\N \mid \omega^t(0,0)\notin I(l,m)\}$.
First note that, by Lemma~\ref{specials}:\ref{ge2s}, if $l,m<2^a$ then
$\tau(l,m)<2^a$. Moreover, Lemma~\ref{specials}:\ref{2s-1} means that if $l+m\ge 2^a-1$ then
$\tau(l,m)\ge 2^a-1$. Hence, $\tau(l,m)=2^a-1=l\#m$ whenever $2^{a-1}\le l,m<2^a$.

Let $r,s,t\in\N$. From Proposition~\ref{backdiag} follows that $\tau(r,s)\ge t$ if, and only
if, there exist $\rho\le r$ and $\s\le s$ with $\rho+\s=t$ such that
$Q_t^{(\rho,\s)}\equiv1$.
Now suppose that $r,s\le 2^a$, and consider $\rho\le r$ and $\s\le s$. 
If $\rho+\s<2^a$ then 
$$Q_{\rho+\s}^{(\rho,\s)}\equiv\binom{\rho+\s}{\rho}=
\binom{2^a}{2^a}\binom{\rho+\s}{\rho}\equiv 
\binom{\rho+\s+2^a}{\rho+2^a}=Q_{\rho+2^a}^{(\rho+2^a,\s)}\,.$$
If instead $\rho+\s\ge 2^a$ then 
$Q_{\rho+\s}^{(\rho,\s)} \equiv 0 \equiv Q_{\rho+2^a+\s}^{(\rho+2^a,\s)}$ by 
Lemma~\ref{specials}:\ref{2s}.
In each case $Q_{\rho+\s}^{(\rho,\s)}\equiv Q_{\rho+2^a+\s}^{(\rho+2^a,\s)}$, which proves
that $\tau(r+2^a,s)=\tau(r,s) + 2^a$ for all $r,s<2^a$.
Taking $l=r+2^a$ and $m=s$ we get $\tau(l,m)=2^a + \tau(l-2^a,m)$ for all $l,m\in\N$ such that
$m<2^a\le l<2^{a+1}$.
On the other hand, from the definition follows that $l\#m=2^a+(l-2^a)\#m$ in this case;
now $\tau(l,m)=l\#m$ follows by induction.
\end{proof}

We record the following facts about the operation $\#$. The third statement is an
immediate consequence of Proposition~\ref{hashprod}, the others follow from the definition.
Remember that $\nu(x)=\min\{j\in\N \mid [x]_j\ne0\}$.

\begin{lma} \label{hashcor}
  Let $l,m\in\N$. 
  \begin{enumerate}
  \item $\max\{l,m\}\le l\#m\le l+m$. \label{hashineq}
  \item $l\#m= l+m \;\Leftrightarrow\; l\perp m$. \label{hashperp} 
  \item $\l\#\mu\le l\#m$ whenever $\l\le l$ and $\mu\le m$. \label{hashleq}
  \item If $l\nperp m$ then $l\#m=(l-1)\#m=l\#(m-1)$. \label{hashnotperp}
  \item If $l\perp m$ and $\nu(l)<\nu(m)$ then
    $l\#(m-1)<(l-1)\#m=l\#m-1=l+m-1$. \label{hashperp-1} 
  \end{enumerate}
\end{lma}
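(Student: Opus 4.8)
The plan is to argue from the binary picture of $\#$. Fix $l,m$ and write $s=s(l,m)$ for the index in the definition; for $n\in\N$ set $n_{\mathrm{lo}}=n\bmod 2^{s}$ and $n_{\mathrm{hi}}=\sum_{i\ge s}[n]_{i}2^{i}$, so that $l\#m=(2^{s}-1)+l_{\mathrm{hi}}+m_{\mathrm{hi}}$ with the three summands pairwise bit-disjoint (the first supported on positions $<s$, the other two on positions $\ge s$, where $l$ and $m$ are disjoint by the choice of $s$). Throughout I will use that $s=0$ exactly when $l\perp m$, and that if $s\ge1$ then minimality of $s$ forces the (necessarily unique) overlap of $l$ and $m$ to sit at position $s-1$, so $[l]_{s-1}=[m]_{s-1}=1$, and in particular $l,m\ge 2^{s-1}\ge1$ (so that $l-1$ and $m-1$ make sense in (4)).

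Parts (1) and (2) are then pure bookkeeping. From $l=l_{\mathrm{lo}}+l_{\mathrm{hi}}\le(2^{s}-1)+l_{\mathrm{hi}}\le l\#m$, and symmetrically for $m$, one gets $\max\{l,m\}\le l\#m$. For the upper bound, $l\#m=l_{\mathrm{hi}}+m_{\mathrm{hi}}=l+m$ when $s=0$, whereas when $s\ge1$ one has $l_{\mathrm{lo}}+m_{\mathrm{lo}}\ge 2\cdot 2^{s-1}>2^{s}-1$, so $l+m=(l_{\mathrm{lo}}+m_{\mathrm{lo}})+l_{\mathrm{hi}}+m_{\mathrm{hi}}>l\#m$; comparing the two cases gives (2). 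For (3) I would simply quote Proposition~\ref{hashprod}: if $l'\le l$ and $m'\le m$ then $I(l,m)\subseteq I(l',m')$, since any generator $1_{(i,j)}$ of $I(l,m)$ has $i>l\ge l'$ or $j>m\ge m'$; hence $\{t:\omega^{t}(0,0)\notin I(l',m')\}\subseteq\{t:\omega^{t}(0,0)\notin I(l,m)\}$, and taking maxima yields $l'\#m'\le l\#m$.

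For (4), assume $l\nperp m$, so $s\ge1$ and $\nu(l)\le s-1$ (as $[l]_{s-1}=1$). Subtracting $1$ from $l$ changes only digits in positions $\le\nu(l)\le s-1$ and leaves $l_{\mathrm{hi}}$ fixed. If $\nu(l)<s-1$, the overlap with $m$ remains at position $s-1$, so $s(l-1,m)=s$ and $(l-1)_{\mathrm{hi}}=l_{\mathrm{hi}}$, giving $(l-1)\#m=l\#m$. If $\nu(l)=s-1$, then $l_{\mathrm{lo}}=2^{s-1}$; the number $l-1$ has digit $1$ in positions $0,\dots,s-2$, digit $0$ in position $s-1$, and is unchanged above, and the top overlap with $m$ drops to position $s'-1$, where $s'=1+\max\{r<s-1\mid [m]_{r}=1\}$ (read as $0$ if there is no such $r$); a direct computation of the high parts relative to this $s'$ collapses $(l-1)\#m$ back to $(2^{s}-1)+l_{\mathrm{hi}}+m_{\mathrm{hi}}=l\#m$. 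By symmetry of $\#$ one gets $l\#(m-1)=l\#m$ as well. For (5), assume $l\perp m$ (so $l\#m=l+m$ by (2)) and $\nu(l)<\nu(m)$. Since $m$ has no digit at or below position $\nu(l)$, the number $l-1$ --- digit $0$ at $\nu(l)$, digit $1$ in every position below it, unchanged above --- is still disjoint from $m$, hence $(l-1)\#m=(l-1)+m=l\#m-1$. On the other hand $m-1$ has a digit $1$ at position $\nu(l)$, colliding with $[l]_{\nu(l)}=1$, so $l\nperp(m-1)$ and therefore $l\#(m-1)<l+(m-1)=l+m-1=(l-1)\#m$ by (2).

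The one point that needs care is the case $\nu(l)=s-1$ in (4): one has to identify correctly the new defining index $s'$ and re-express both high parts relative to it (the effect being that $l_{\mathrm{hi}}$ acquires the block $2^{s-1}-2^{s'}$ while $m_{\mathrm{hi}}$ acquires $2^{s-1}$, and the constant term drops from $2^{s}-1$ to $2^{s'}-1$, the three changes cancelling). Everything else is routine once the binary description of $l\#m$ is fixed.
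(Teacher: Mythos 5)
Your proof is correct and takes essentially the route the paper intends: part 3 via Proposition~\ref{hashprod} and the inclusion $I(l,m)\subseteq I(l',m')$, and parts 1, 2, 4, 5 by direct binary bookkeeping from the definition, which is exactly what the paper leaves to the reader as ``immediate''. The only blemish is the parenthetical claim that the overlap of $l$ and $m$ is ``necessarily unique'' and sits at position $s-1$ --- uniqueness is false in general (e.g.\ $l=m=3$ gives overlaps at positions $0$ and $1$), but you only ever use $[l]_{s-1}=[m]_{s-1}=1$, so nothing in the argument breaks.
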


Formulae for the length of a tensor product of uniserial string modules can now be
derived from Proposition~\ref{hashprod}.

\begin{prop} \label{uniserprop}
  Let $l,m\in\N$, and let $s\in\N$ be the smallest number such that $[l]_r+[m]_r\le1$ for
  all $r\ge s$. Then
  \begin{align*}
    \ell(M(A_l)\tp M(B_m))&=
    \begin{cases}
      2+l\#m     &\mbox{if $l\not\perp m$}, \\
      1+l\#m = 1+l+m &\mbox{if $l\perp m$},
    \end{cases} \\
    \intertext{and}
    \ell(M(A_l)\tp M(A_m)) &=
    \begin{cases}
      2+l\#m &\mbox{if there exists $t<s-1$ such that $[l]_t=1$ or $[m]_t=1$}, \\
      1+l\#m &\mbox{if $[l]_t=[m]_t=0$ for all $t<s-1$},    
    \end{cases} \\
  \end{align*}
\end{prop}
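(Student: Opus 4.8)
The plan is to express the Loewy length of a tensor product of uniserial string modules in terms of the infinite module $V$ and the map $\omega$, reduce to a computation of the quantity $\tau(l,m)=\max\{t\mid \omega^t(0,0)\notin I(l,m)\}$ already evaluated in Proposition~\ref{hashprod}, and then account for the correction term of $1$ or $2$ coming from the need to track \emph{two} generators rather than one. First I would note that, by Proposition~\ref{subquo} and the discussion preceding it, it suffices to compute $\ell(u\tp v)$ for $u,v$ top basis elements of $M(A_l)$, $M(A_m)$ (resp.\ $M(B_m)$), so $u\tp v$ generates $M(A_l)\tp M(A_m)$ (resp.\ $M(A_l)\tp M(B_m)$) as a $\Lambda_0$-module, and $\ell$ of the tensor product equals $h(u\tp v)+1$. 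Via the isomorphism $\f$ of Proposition~\ref{fiso}, the submodule $\<u\tp v\>$ is carried into $V(l,m)$, with $u\tp v\mapsto (0,0)$. Since $\Lambda_0=k\<X,Y\>/(X^2,Y^2)$ has radical spanned by the directed words $A_t,B_t$ ($t\ge1$), the Loewy length of $\<(0,0)\>\subset V(l,m)$ is governed by how long $A_t\cdot(0,0)$ and $B_t\cdot(0,0)$ remain nonzero in $V(l,m)$, equivalently by $\max\{t\mid A_t(0,0)\notin I(l,m)\text{ or }B_t(0,0)\notin I(l,m)\}$, plus $1$.

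Next I would invoke Proposition~\ref{omega}:\ref{omega}: since $0\equiv0$, we have $A_t\cdot(0,0)=\omega^t(0,0)$, so the ``$A$''-part of the above maximum is exactly $\tau(l,m)=l\#m$ by Proposition~\ref{hashprod}. The ``$B$''-part is the subtler one. Here I would use the other half of Proposition~\ref{omega}:\ref{omega}, namely $B_t\cdot\big((0,1)+(1,0)\big)=\omega^t(1,1)$, together with $B_1\cdot(0,0)=X\cdot(0,0)=(1,0)$ and $A_1\cdot(0,0)=Y\cdot(0,0)=(0,1)+(1,0)+(1,1)$. Writing $B_{t+1}=A_tX$ one gets $B_{t+1}\cdot(0,0)=A_t\cdot(1,0)$, and to evaluate $A_t\cdot(1,0)$ I would split $(1,0)=\big[(0,1)+(1,0)+(1,1)\big]-(0,1)-(1,1)=Y(0,0)-(0,1)-(1,1)$ and use $A_t Y(0,0)=B_{t+1}(0,0)$ — that is circular, so instead the cleaner route is: $A_t\cdot(1,1)=\omega^t(1,1)$ (again since $1\equiv1$), and $A_t\cdot\big((0,1)+(1,0)\big)=\omega^t(1,1)$, hence $A_t\cdot(1,0)=A_t\cdot(0,1)$ and $2A_t(1,0)=A_t\big((1,0)+(0,1)\big)-\text{(diagonal correction)}$; in characteristic $2$ this says $A_t\cdot(1,0)=A_t\cdot(0,1)$ and, adding $A_t(1,1)=\omega^t(1,1)$ to $A_t((1,0)+(0,1))=\omega^t(1,1)$, that $A_t\big((1,0)+(0,1)+(1,1)\big)=0$, i.e.\ $A_t Y(0,0)=0$ — which is false for small $t$, so the correct bookkeeping must instead start from $B_{t}(0,0)=A_{t-1}(1,0)$ and compute $A_{t-1}(1,0)$ directly as a sum $\sum Q'^{(l,m)}(l,m)$ of shifted path counts, where $Q'^{(l,m)}$ counts length-$(t-1)$ paths from $(1,0)$. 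Paths from $(1,0)$ to $(l,m)$ of length $t-1$ are in bijection with paths from $(0,0)$ to $(l-1,m)$ of length $t-1$, shifted; so $B_t(0,0)\notin I(l,m)$ iff there is $(l',m')$ with $l'\le l-1$, $m'\le m$, reachable from $(0,0)$ in $t-1$ steps with odd coefficient — but the parity structure differs because the shift changes the checkerboard colouring. The honest way to organise this is: compute $h(u\tp v)$ as $\max\{\tau(l,m),\ \tau_B(l,m)\}$ where $\tau_B(l,m)=\max\{t\mid B_t(0,0)\notin I(l,m)\}$, establish $\tau_B(l,m)=1+\tau(l-1,m)$ when $l\ge1$ (and similarly with roles swapped), and then feed in $\tau(l-1,m)=(l-1)\#m$ from Proposition~\ref{hashprod}. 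Using Lemma~\ref{hashcor}:\ref{hashnotperp} and~\ref{hashperp-1}, one has $(l-1)\#m=l\#m$ when $l\nperp m$ and $(l-1)\#m=l\#m-1=l+m-1$ when $l\perp m$ with $\nu(l)\le\nu(m)$, and $(l-1)\#m<l\#m-1$ in the remaining $l\perp m$ case.

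Finally I would assemble the cases. For $M(A_l)\tp M(B_m)$ the relevant generator $u\tp v$ has $Xu=0$, $Yv=0$ (or symmetric), so one of the two words, say $A_t$, acts as $\omega^t(0,0)$ giving the value $l\#m$, while the competing word contributes $1+\tau$ of a shifted point; the maximum works out to $1+l\#m=1+l+m$ precisely when $l\perp m$ (so $l\#m=l+m$ and no extra diagonal survives), and to $2+l\#m$ when $l\nperp m$, where a diagonal step pushes one further level. For $M(A_l)\tp M(A_m)$ both generators are annihilated by $Y$, so now the directed words that stay longest are the $A$-words; the same analysis gives $1+l\#m$ unless there is a bit of $l$ or $m$ strictly below position $s-1$ (the threshold where $\#$ starts ``overlapping''), in which case an extra radical layer appears and the answer is $2+l\#m$. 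The hard part will be the precise combinatorial verification that $\tau_B(l,m)=1+\tau(l-1,m)$ — equivalently, pinning down exactly when a shifted point $(l-1,m)$ (as opposed to $(l,m)$) can be reached with an odd path count — and matching the resulting threshold condition ``$[l]_t=[m]_t=0$ for all $t<s-1$'' against the $\#$-operation; I expect this to hinge on a careful application of Proposition~\ref{backdiag} and Lemma~\ref{specials} to the shifted endpoint, exactly as in the proof of Proposition~\ref{hashprod}, together with Lucas' theorem to read off the binary-digit conditions.
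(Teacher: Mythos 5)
There is a genuine gap at the very first step. You claim that $u\tp v$ generates the tensor product, so that $\ell(M\tp N)=1+h(u\tp v)$; neither assertion follows from Proposition~\ref{subquo} or the discussion preceding it, which only yields $\ell(M\tp N)=\max\{\ell(\<a'\>\tp\<b'\>)\}$ over top basis elements $a',b'$ --- and for uniserial $M,N$ this maximum is just $\ell(M\tp N)$ again, not $\ell(\<u\tp v\>)$. In fact the reduction is false precisely in the delicate (pure) case: for $M=M(A_l)$, $N=M(A_m)$ the element $u\tp v$ is annihilated by $X$, so every $B$-word kills it and $h(u\tp v)=\max\{t\mid A_t(u\tp v)\ne0\}=l\#m$ by Propositions~\ref{fiso} and~\ref{hashprod}. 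Your framework can therefore only ever output $1+l\#m$ for $M(A_l)\tp M(A_m)$, and the exceptional value $2+l\#m$ is unreachable from $\<u\tp v\>$. Concretely, $\ell(M(A_3)\tp M(A_2))=2+3\#2=5$, whereas $\ell(\<u\tp v\>)=1+3\#2=4$; the maximum is attained on the impure basis element $Yu\tp v$, which does not lie in $\<u\tp v\>$. Relatedly, your $B$-word bookkeeping contradicts the action defined on $V$: both indices of $(0,0)$ are even, so $X\cdot(0,0)=0$ and $B_t\cdot(0,0)=0$ for all $t\ge1$; the identity $B_1\cdot(0,0)=(1,0)$, the quantity $\tau_B(l,m)$ and the unproved key claim $\tau_B(l,m)=1+\tau(l-1,m)$ all rest on this misreading (your own text records the resulting contradictions), and in any case they are aimed at the impure product $M(A_l)\tp M(B_m)$ rather than at the pure product where the difficulty actually lies.

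The workable route --- and the paper's --- is to apply Lemma~\ref{lem:Basisradical} to the whole homogeneous basis rather than to the single element $u\tp v$: for a pure element $a\tp b$ one letter annihilates it, so only directed words in the other letter act and $h(a\tp b)=h(a)\#h(b)$ by Proposition~\ref{hashprod} (no shifted-path analysis is needed); for an impure element, $X(a\tp b)$ and $Y(a\tp b)$ are pure, whence $h(a\tp b)=1+\max\{(h(a)-1)\#h(b),\,h(a)\#(h(b)-1)\}$. Monotonicity of $\#$ (Lemma~\ref{hashcor}) reduces the maximum over the basis to the three elements $u\tp v$, $u'\tp v$, $u\tp v'$, and the binary case analysis via Lemma~\ref{hashcor}, writing $l=\l+2^{s-1}+l_0$ and $m=\mu+2^{s-1}+m_0$ when $l\nperp m$, produces exactly the stated conditions; the extra radical layer in the pure case comes from the impure elements $u'\tp v$, $u\tp v'$ one step below the top. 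Your assembly of the impure case $M(A_l)\tp M(B_m)$ is correct in outcome, but even there the equality $\ell(M\tp N)=\ell(\<u\tp v\>)$ requires the comparison with all other basis elements, which your plan omits.
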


Observe that $\ell(M(A_l)\tp M(A_m))\le \ell(M(A_l)\tp M(B_m))=\min\{2+l\#m,1+l+m\}$ for all
$l,m\in\N$.

\begin{proof}
Suppose that $M$ and $N$ are uniserial string modules, with standard bases $\B_M$ and
$\B_N$ respectively. Let $u\in\B_M$ and $v\in\B_N$ be top basis elements, and set
$l=h(u)$, $m=h(v)$. Denote by $u'$ the element in $\B_M$ satisfying
$h(u')=l-1$ (\ie, $u'=Xu$ or $u'=Yu$, whichever is non-zero), and by $v'$ the element in
$\B_N$ for which $h(v')=m-1$.
By Lemma~\ref{lem:Basisradical}, 
$h(M\tp N) = \max\{h(a\tp b) \mid a\in\B_M,b\in\B_N\}$.

Now, if $a\tp b\in\B_M\tp \B_N$ is pure (that is, if both $a$ and $b$ are annihilated by either $X$
or $Y$), then $h(a\tp b)=h(a)\#h(b)$. If on the other hand $Xa=0\ne Xb$, then 
$$h(a\tp b)=\max\{1+h(Ya\tp b),1+h(a\tp Xb)\}= 
\max\{1+(h(a)-1)\#h(b), 1+h(a)\#(h(b)-1)\}\,.$$

From Lemma~\ref{hashcor}:\ref{hashleq} follows that
$\max\{h(a\tp b)\mid a\in\B_M,b\in\B_N\}= \max\{h(u\tp v), h(u'\tp v), h(u\tp v')\}$.
Therefore, if $u\tp v$ is pure then 
\begin{align*}
h(M\tp N)&= \max\{l\#m, 1+(l-2)\#m, 1+(l-1)\#(m-1), 1+l\#(m-2) \}, \\
\intertext{whereas}
h(M\tp N) &= \max\{1+(l-1)\#m, 1+l\#(m-1), (l-1)\#m, l\#(m-1) \} \\
&= 1+\max\{(l-1)\#m,l\#(m-1) \}
\end{align*}
if $u\tp v$ is impure.

Starting with the impure case, assume that $M=M(A_l)$ and $N=M(B_m)$.
If $l\perp m$ then $\max\{(l-1)\#m, l\#(m-1)\}=l+m-1$, by
Lemma~\ref{hashcor}:\ref{hashperp} and \ref{hashperp-1}, hence $h(M\tp N)= l+m=l\#m$. 
If $l\not\perp m$ then $(l-1)\#m=l\#(m-1)=l\#m$ (Lemma~\ref{hashcor}:\ref{hashnotperp}),
so $h(M\tp N) = 1+l\#m$. 

It remains to consider the pure case. Let $M=M(A_l)$, $N=M(A_m)$, and let $s\in\N$ be the
smallest number such that $[l]_r+[m]_r\le1$ for all $r\ge s$, as stated in the proposition.
If $s=0$ then $l\#m=l+m$ whereas, by Lemma~\ref{hashcor}:\ref{hashineq},
$(l-2)\#m,(l-1)\#(m-1), l\#(m-2)\le l+m-2$. Hence $h(M\tp N)=l\#m$ in this case.

Assume instead that $s>0$, \ie, $l\not\perp m$. 
Then $[l]_{s-1}=[m]_{s-1}=1$, so we can write $l=\l+2^{s-1}+l_0$ and $m=\mu+2^{s-1}+m_0$
with 
\begin{equation*}
  \l=\sum_{t\ge s}[l]_t2^t,\quad l_0=\sum_{r<s-1}[l]_r2^r,\qquad\mbox{and}\qquad
  \mu=\sum_{t\ge s}[m]_t2^t,\quad m_0=\sum_{r<s-1}[m]_r2^r \,.
\end{equation*}
First, observe that if $l_0\ne0$ then $(l-1)\not\perp m$, and hence, by
Lemma~\ref{hashcor}:\ref{hashnotperp}, $(l-2)\#m=(l-1)\#m=l\#m$. Since
$(l-1)\#(m-1),l\#(m-2)\le l\#m$ by Lemma~\ref{hashcor}:\ref{hashleq}, it follows that
$h(M\tp N)=1+l\#m$. The same is true if $m_0\ne0$.

If $l_0=m_0=0$ then $[l-1]$ and $[m]$ are disjoint, so
$\max\{1+(l-2)\#m, 1+(l-1)\#(m-1), 1+l\#(m-2) \} \le l+m-1=(l-1)\#m\le l\#m$, implying that
$h(M\tp N)=l\#m$.

Summarising, we have
$$h(M(A_l)\tp M(A_m))=\begin{cases}l\#m &\mbox{if $s=0$ or if $s>0$ and $l_0=m_0=0$}, \\
1+l\#m & \mbox{if $s>0$ and either $l_0$ or $m_0$ is non-zero}.
\end{cases}$$
\end{proof}

\section{The non-uniserial case} \label{non-uniserial}

In the previous section, we computed the Loewy length of tensor products of uniserial
modules. In this section, tensor products involving bands with simple top and simple socle
are treated. 
Proposition~\ref{withuniserial} gives the Loewy length of the tensor product of such a
band with a uniserial string module, while Propositions~\ref{differentlegs},
\ref{differentlegs2} and \ref{bandband} cover the case of a product of two band modules.

\begin{prop} \label{withuniserial}
Let $N=M(A_m)$ and $M=M(A_{l_1}B_{l_2}\inv,\rho)$. Now
$$ \ell(M\tp N)=
\begin{cases}
  2+(l_1-1)\#m & \mbox{if $\rho=1$, $l_1=l_2$ and $l_1\perp m$, $l_1\perp(m-1)$,} \\
  \ell\left(M\left(A_{l_1}B_{l_2}\inv \right)\tp N\right) & \mbox{otherwise.}
\end{cases}$$
\end{prop}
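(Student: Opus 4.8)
The plan is to work with the band module $M=M(A_{l_1}B_{l_2}^{-1},\rho)$ directly via a homogeneous basis of $M\tp N$ and to reduce, as far as possible, to the uniserial computations of Section~\ref{soluniser}. Recall that $M$ has a standard basis indexed along the schema of the cyclic word $A_{l_1}B_{l_2}^{-1}$: there is a top basis element, two "legs" of lengths $l_1$ and $l_2$ descending from it to a common socle element $z$, around which the map $\rho$ closes up the band (since $n=1$). First I would identify the directed components: $w_1=A_{l_1}$ and $w_2=B_{l_2}^{-1}$, so $M(w_1)=M(A_{l_1})$ and $M(w_2)=M(B_{l_2})$. By Proposition~\ref{subquocor}, whenever $M$ fails to have simple top and simple socle — which, with $n=1$, is exactly when $l_1\neq l_2$ or $\rho$ is allowed to vary — we already get $\ell(M\tp N)=\max\{\ell(M(A_{l_1})\tp N),\ell(M(B_{l_2})\tp N)\}$, and that maximum is, by definition, $\ell(M(A_{l_1}B_{l_2}^{-1})\tp N)$ since $M(A_{l_1}B_{l_2}^{-1})\cong M(A_{l_1})\oplus_{z} M(B_{l_2})$ glued at the socle (or more cleanly: $M(A_{l_1}B_{l_2}^{-1})$ contains both $M(A_{l_1})$ and $M(B_{l_2})$ as subquotients and is itself a subquotient of their direct sum). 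So the real content is the case $l_1=l_2$, where $M$ has simple top and simple socle and Proposition~\ref{subquocor} gives no leverage; there the band module $M(A_{l_1}B_{l_1}^{-1})$ (string, with $\rho$ removed) properly contains $M$ as... no, rather $M$ and $M(A_{l_1}B_{l_1}^{-1})$ are genuinely different modules and we must compare their tensor products with $N$ by hand.

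So assume $l_1=l_2=:l$. Set up a homogeneous basis $\B_M\tp\B_N$ where $\B_M=\{x_0,\dots\}$ runs along the two legs from the top $u$ down to the common socle vector $z$, and $\B_N$ is a standard basis of $M(A_m)$ with top $v$. By Lemma~\ref{lem:Basisradical} and the discussion following Lemma~\ref{lem:Basisradical} (the "good basis" reduction), $\ell(M\tp N)=\max\{\ell(\<u'\tp v'\>)\}$ over top basis elements; since both $M$ and $N=M(A_m)$ have simple top, this is just $\ell(u\tp v)$, where $u$ is the top of $M$. The key observation is that the cyclic word for $M$, as far as \emph{directed subwords of cyclic permutations} are concerned, behaves like the cyclic word $(A_l B_l^{-1})$, whose longest directed subword through the socle, reading around the closure, has length $l-1$ on one side plus... — more precisely, the submodule $\<u\>\subset M$ generated by the top is all of $M$, and its "profile" differs from that of $M(A_l B_l^{-1})$ only at the socle, where the band identification via $\rho$ either creates a new long directed path (when $\rho=1$, the two legs $A_l$ and $B_l^{-1}$ splice through $z$ into a longer directed run, because at $z$ the letters can be read consistently as $\ldots X Y X Y\ldots$) or does not (when $\rho\neq1$). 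This is exactly where the hypotheses $l\perp m$, $l\perp(m-1)$ and $\rho=1$ enter: under $\rho=1$ and $l_1=l_2$, $M$ acquires, beyond the subquotients $M(A_l)$ and $M(B_l)$, an extra subquotient isomorphic to a uniserial string module $M(A_{l-1}B'\cdots)$ or similar — concretely $M$ itself generated by $u$ has $h(u)$ governed by a path of length up to $l-1$ continuing through the band, so $h(u\tp v)$ can reach $1+(l-1)\#m$ via the impure-element analysis, giving $\ell=2+(l-1)\#m$. When any of $\rho\neq1$, $l\perp m$ fails, or $l\perp(m-1)$ fails, the $\#$-arithmetic of Lemma~\ref{hashcor} collapses this extra term back below $\max\{\ell(M(A_l)\tp N),\ell(M(B_l)\tp N)\}$, so the "otherwise" value is attained.

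Concretely I would compute $\ell(u\tp v)$ by the same case split as in the proof of Proposition~\ref{uniserprop}: whether $u\tp v$ is pure or impure (here $u$ is impure precisely because in a band with $l_1=l_2$ both $Xu$ and $Yu$ are — no: $u$ lies at the top so exactly one of $Xu,Yu$ is nonzero; but its relationship with the band identification means the relevant generator to track is the socle-adjacent vector). Using $\f$ and $\omega$ from Section~\ref{quiversetup} adapted to the band (the analogue where one folds $\Gamma$ around so that stepping $l$ times down one leg and the action of $\rho$ returns into the other leg), one reduces $h(u\tp v)$ to an expression in $\#$, $(l-1)\#m$, $l\#(m-1)$, $(l-1)\#(m-1)$. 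Then invoke Lemma~\ref{hashcor}: parts~\ref{hashineq}--\ref{hashnotperp} to bound everything by $1+l\#m$ in general, and parts~\ref{hashperp}, \ref{hashperp-1} to see that the extra $\rho=1$ contribution $2+(l-1)\#m$ strictly exceeds the uniserial values $\ell(M(A_l)\tp N)=\ell(M(B_l)\tp N)=1+l\#m$ (using $l\perp m$ so $l\#m=l+m$ and $l\perp(m-1)$ so $(l-1)\#m$ still equals $l+m-1$, whence $2+(l-1)\#m=l+m+1=1+l\#m+1$) — and conversely that failing any hypothesis brings it back down. The main obstacle I anticipate is the bookkeeping for the band: establishing rigorously that in the $l_1=l_2$, $\rho=1$ case the module $M$ has the claimed extra "long directed" behaviour through the socle and no more — i.e. pinning down which homogeneous basis element $u\tp v$ realizes the maximum $h$ and that $\rho=1$ versus $\rho\neq1$ is exactly the dichotomy, rather than $\rho$ mattering in some subtler $\#$-arithmetic way. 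Once the correct substitute for $\omega^t(0,0)$ on the folded quiver is in hand, the rest is the same $\#$-juggling as in Proposition~\ref{uniserprop}.
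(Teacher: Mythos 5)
There are genuine gaps. First, your disposal of the case $l_1\ne l_2$ via Proposition~\ref{subquocor} is not available: with a scalar $\rho$ the band $M(A_{l_1}B_{l_2}\inv,\rho)$ \emph{always} has simple top and simple socle, whatever $l_1,l_2$ are, and this is exactly the case $m=n=1$ that Proposition~\ref{subquocor}:2 excludes. The statement $\ell(M\tp N)=\ell(M(A_{l_1}B_{l_2}\inv)\tp N)$ for $l_1\ne l_2$ therefore needs a proof; the paper obtains it by writing $M$ as the quotient of $M'=M(A_{l_1}B_{l_2}\inv)=\<u_a+u_b\>$ by $U=\<A_{l_1}u_a+\rho B_{l_2}u_b\>$ and showing, via Proposition~\ref{backdiag} and equation~(\ref{notime}), that $\rad^t(M'\tp N)\subseteq U\tp N$ can only happen if $\rho=1$ and $l_1=l_2$ and $l_1\perp m$, $l_1\perp(m-1)$.

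Second, and more seriously, your description of the exceptional case runs in the wrong direction. Under $l_1=l_2=l$, $\rho=1$, $l\perp m$, $l\perp(m-1)$ one has $\nu(l)>\nu(m)$, hence $(l-1)\nperp m$ and, by Lemma~\ref{hashcor}:\ref{hashperp}--\ref{hashperp-1}, $(l-1)\#m<l+m-1$; so $2+(l-1)\#m\le l+m<1+l+m=\ell(M(A_lB_l\inv)\tp N)$. Your claim that $(l-1)\#m=l+m-1$ and that $2+(l-1)\#m=l+m+1$ \emph{exceeds} the string value contradicts this: the band relation with $\rho=1$ does not ``splice the legs into a longer directed run'' (the Loewy length of $M$ itself is $l+1$ for every $\rho$); rather it produces a \emph{cancellation} in $M\tp N$, namely $\rad^t(M'\tp N)\subseteq U\tp N$ for $t=2+(l-1)\#m$, which strictly lowers the Loewy length below that of $M'\tp N$. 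Finally, the heart of the matter --- proving $A_t\cdot(u\tp v)=0$ at exactly this $t$, i.e.\ that the coefficients $\a_t^{l,\mu}\equiv Q_t^{(l,\mu)}$ and $\b_t^{l,\mu}\equiv Q_{t-1}^{(l,\mu-1)}$ coming from the two legs agree for all $\mu$, together with the lower bound $h(u\tp v)\ge1+(l-1)\#m$ --- is exactly the part you defer to an unspecified ``folded quiver'' analogue of $\omega$ and ``the same $\#$-juggling''; in the paper this requires Proposition~\ref{backdiag}, Lucas' theorem and a careful $2$-adic comparison (the cases $\nu(j)>\nu(t)$ and $\nu(j)\le\nu(t)$), and none of it is supplied or correctly anticipated in your outline.
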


Remember that $l\#m=l+m$ if and only if the binary expansions of $l$ and $m$ are
disjoint. 

\begin{proof}
Let $u_a$ and $u_b$ be top basis elements in $M(A_{l_1})$ and $M(B_{l_2})$ respectively,
and set $u'=u_a+u_b$. Then $\<u'\>\simeq M(A_{l_1}B_{l_2}\inv)$ and $M\simeq\<u'\>/U$, where
$U=\<A_{l_1}u'+\rho B_{l_2}u'\>=\<A_{l_1}u_a+\rho B_{l_2}u_b\>$.
We view the two latter isomorphisms as identifications, and write
$M(A_{l_1}B_{l_2}\inv)=\<u'\>$ and $M=M(A_{l_1}B_{l_2}\inv)/U$.
Moreover, $v$ denotes a top basis element in $N$. 
Now $M\tp N\simeq\frac{M(A_{l_1}B_{l_2}\inv)\tp N}{U\tp N}$, and
$\ell(M\tp N)\le \ell(M(A_{l_1}B_{l_2}\inv)\tp N)$.

Assume that $\ell(M\tp N)<\ell(M(A_{l_1}B_{l_2}\inv)\tp N)$. 
Then there exists a natural number $t$ such that 
$\rad^t \left(M(A_{l_1}B_{l_2}\inv)\tp N\right)$ is non-zero but contained in $U\tp N$.
Now, for any $t\ge0$, 
$B_t\cdot\left(M(A_{l_1}B_{l_2}\inv)\tp N\right) \not\subset U\tp N$ 
unless $B_t\cdot\left(M(A_{l_1}B_{l_2}\inv)\tp N\right) =0$,
so the above condition implies 
$0\ne A_t\cdot\left(M(A_{l_1}B_{l_2}\inv)\tp N\right)\subset U\tp N$ for some $t$.
The latter can be true only if $A_t\cdot(u'\tp v)\in (U\tp N)\minus\{0\}$,
equivalently, there exists a non-zero $n\in N$ such that 
$A_t\cdot(u_a\tp v) = A_{l_1}u_a\tp n$ and $A_t\cdot(u_b\tp v) = \rho B_{l_2}u_b\tp n$. 

We have $A_t\cdot(u_a\tp v)=\sum_{i,j}\gamma_{i,j}A_i u_a\tp A_j v$, with
$\gamma_{i,j}\in\mathbb{F}_2=\{0,1\}$. Hence $A_t\cdot(u_a\tp v)=A_{l_1}u_a\tp n$ implies 
$n=\sum_j \gamma_{l_1,j}A_jv$.
On the other hand, $A_t\cdot(u_b\tp v)=\rho B_{l_2}u_b\tp n$ similarly gives
$n=\sum_j \rho\d_j A_jv$ for some $\d_j\in\{0,1\}$. 
Since $n\ne0$ this means that $\rho=1$.

Next, by Equation~(\ref{notime}) and Proposition~\ref{backdiag}, the identity
$A_t\cdot(u_a\tp v) = A_{l_1}u_a\tp n$ implies that $\min\{j\in\N \mid \d_j\ne0\}=t-l_1$,
and $A_t\cdot(u_b\tp v) = B_{l_2}u_b\tp n$ implies 
$\min\{j\in\N \mid \d_j\ne0\}=t-l_2$.
Thus $l_1=l_2$.

From here on, set $l=l_1=l_2$.
The existence of a non-zero $n\in N$ such that $A_t\cdot(u_a\tp v) = A_lu_a\tp n$ and
$A_t\cdot(u_b\tp v) = B_lu_b\tp n$ 
is equivalent to $A_{l+m}(u_a\tp v)$ and $A_{l+m}(u_b\tp v)$ being
non-zero.
Namely, as we saw above, $A_t\cdot(u_a\tp v) = A_lu_a\tp n$
for $n=\sum_{\mu=0}^m\gamma_\mu A_\mu v\ne0$ implies that $\min\{\mu\mid
\gamma_\mu\ne0\}=t-l$.
Since $A_lu_a\in\soc M(A_l)$, we have $R\cdot(A_lu_a\tp n)=A_lu_a\tp Rn$ for all
$R\in\rad(kD_{4q})$, and so
$A_{t+r}\cdot(u_a\tp v) = A_lu_a\tp\left(\sum_\mu \gamma_\mu A_{\mu+r} v\right)$ for all
$r\in\N$.
Setting $r=l+m-t$, this gives
$$A_{l+m}\cdot(u_a\tp v) = A_lu_a\tp\left(\gamma_{t-l} A_m v\right)=
A_lu_a\tp A_m v\ne0.$$
Similarly, one proves that $A_{l+m}\cdot(u_b\tp v)$ is non-zero if
$A_t\cdot(u_b\tp v)=B_lu_b\tp n$ for some $t>0$ and $n\in N\minus\{0\}$.

Now $A_{l+m}(u_a\tp v)\ne0$ precisely when $l\#m=l+m$, and $A_{l+m}(u_b\tp v)\ne0$
precisely when $l\#(m-1)=l+m-1$. So if either doesn't hold, then 
$\ell(M\tp N)=\ell(\<u'\>\tp N)=\ell(M'\tp N)$. 

From here on, assume that $A_{l+m}(u_a\tp v)$ and $A_{l+m}(u_b\tp v)$ are non-zero.
Since the binary expansions of $l$ and $m$ are disjoint, $\nu(l)\ne\nu(m)$, and
$l\perp(m-1)$ then implies $\nu(l)>\nu(m)$. Consequently, $(l-1)\not\perp m$, so
$(l-1)\#m<l+m-1$ by Lemma~\ref{hashcor}:\ref{hashperp}.

We have 
\begin{equation} \label{geqB}
h(u\tp v)\ge \max\{\tau\mid B_\tau\cdot(u\tp v)\ne0\} =
\max\{\tau\mid B_\tau\cdot(M(B_l)\tp N)\ne0\}= 1+(l-1)\#m
\end{equation}
and it is easy to check that $h(a\tp b)\le h(u\tp v)$ for all $a\in\mathcal{B}_M$,
$b\in\mathcal{B}_N$, hence $h(M\tp N)=h(u\tp v)$ by Lemma~\ref{lem:Basisradical}.
Set $t=2+(l-1)\#m$. To prove the proposition, it is now enough to show that  
$A_t\cdot(u\tp v)=0$, since this, together with (\ref{geqB}), implies 
$h(u\tp v)=1+(l-1)\#m$.

Observe that
\begin{align*}
  &\min\{\tau\in\N\mid A_\tau\cdot(u\tp v)\in(\soc M) \tp N \} \\
  =&\min\{\tau\in \N\mid A_\tau\cdot(M(A_{l-1})\tp N)=0 \,,\; 
  A_\tau\cdot(M(B_{l-1})\tp N)=0 \} \\ 
  =&\max\{1+(l-1)\#m, 2+(l-1)\#(m-1)\} \le t \,,
\end{align*}
hence $A_t\cdot(u\tp v)\in(\soc M)\tp N$. Equivalently, 
$A_t\cdot(u_a\tp v)\in(\soc M(A_l))\tp N$ and $A_t\cdot(u_b\tp v)\in(\soc M(B_l))\tp N$.

Writing 
\begin{align*}
A_\tau \cdot(u_a\tp v)&=\sum_{\l,\mu}\a_\tau^{\l,\mu}(A_\l u_a)\tp(A_\mu v) \in M(A_l)\tp N
\quad\mbox{and}\\
A_\tau \cdot(u_b\tp v)&=\sum_{\l,\mu}\b_\tau^{\l,\mu}(B_\l u_b)\tp(A_\mu v) \in M(B_l)\tp N,
\end{align*}
we have $\a_\tau^{\l,\mu} \equiv Q_\tau^{(\l,\mu)}$ and
$\b_\tau^{\l,\mu} \equiv Q_{\tau-1}^{(\l,\mu-1)}\equiv\a_{\tau-1}^{\l,\mu-1}$.
As for $\tau=t$, 
\begin{align*}
A_t\cdot(u_a\tp v) &= \sum_{j=0}^{l+m-t}\a_t^{l,t+j-l}(A_lu_a)\tp(A_{t+j-l}v) \quad\mbox{and}
\\
A_t\cdot(u_b\tp v) &= \sum_{j=0}^{l+m-t}\b_t^{l,t+j-l}(B_lu_b)\tp(A_{t+j-l} v) \,.
\end{align*}
Both $A_t(u_a\tp v)$ and $A_t(u_b\tp v)$ are non-zero, since 
$\min\{\tau\mid A_\tau(u_a\tp v)=0\}= 1+l\#m=1+l+m>l+m\ge 2+(l-1)\#m=t$, and similarly
$\min\{\tau\mid A_\tau(u_b\tp v)=0\}= 2+l\#(m-1)>t$.
Hence, Proposition~\ref{backdiag} (together with the fact that
$\a_t^{\l,\mu}=\b_t^{\l,\mu}=0$ whenever $\l\ne l$) gives that $\a_t^{l,t-l}=1$ and
$\b_t^{l,t-l}=1$, that is, $\binom{t}{l}\equiv\binom{t-1}{l}\equiv1$. In particular,
$\nu(t)<\nu(l)$.

To conclude the proof of the proposition, we shall show that
$\a_t^{l,t+j-l}=\b_t^{l,t+j-l}$ for all $j\le l+m-t$, which is equivalent to 
$A_t(u\tp v)=0$.
As we have seen, $\a_t^{l,t-l}=\b_t^{l,t-l}=1$, so consider $j>0$.

First assume that $\a_t^{l,t+j-l}=1$ and $\nu(j)>\nu(t)$. Since also $\nu(l)>\nu(t)$, it
follows that $\nu(l+j)\ge\min\{\nu(l),\nu(j)\}\ge\nu(t)+1=\nu(t+j)+1$. Hence
$2^{\nu(t)+1}\mid(l+j)$ and $t+j=2^{\nu(t)} + T$, where $2^{\nu(t)+1}\mid T$.
So $Q_t^{(l,t+j-l)}\equiv \a_t^{l,t+j-l}=1$ implies
$\binom{T}{l+j}=\binom{T}{l+j}\binom{\nu(t)}{0}\equiv \binom{t+j}{l+j}\equiv1$.
But $t-1+j=T+(2^{\nu(t)}-1)$, so 
$$\binom{t-1+j}{l+j}\equiv\binom{T}{l+j}\binom{2^{\nu(t)}-1}{0}
=\binom{T}{l+j}\equiv1 \,,$$
implying $\b_t^{l,t+j-l}\equiv\binom{t-1+j}{l+j}\binom{l+j}{2j}\equiv1$ 
(clearly, $1=\a_t^{l,t+j-l}\equiv\binom{t+j}{l+j}\binom{l+j}{2j}$ means that
$\binom{l+j}{2j}\equiv1$)

Next, consider the case $\nu(j)\le\nu(t)$.
Here, if $\binom{l+j}{2j}\equiv1$ then $[j]_i=1$ for all $i\in\{\nu(j),\ldots,\nu(l)-1\}$,
in particular, $[j]_{\nu(t)}=1$. Hence $[t+j]_{\nu(t)}=0$. But
$[l+j]_{\nu(t)}=[j]_{\nu(t)}=1$ since $\nu(l)>\nu(t)$, so $\binom{t+j}{l+j}\equiv0$ and
$\a_t^{l,t+j-l}=0$. 

The arguments in the two preceding paragraphs show that the inequality
$\a_t^{l,t+j-l} \le \b_t^{l,t+j-l}$ holds true.
To prove the converse assume, for a contradiction, that $\b_t^{l,\mu}=1$ and
$\a_t^{l,\mu}=0$, where $\mu=t+j-l$. As $\a_{t-1}^{l,\mu-1}=\b_t^{l,\mu}=1$, it follows
that either $\a_{t-1}^{l-1,\mu-1}=1$ or $\a_{t-1}^{l-1,\mu}=1$ (but not both).
By Proposition~\ref{backdiag}, there exist $\l_0\le l-1$ and $\mu_0\le \mu$ satisfying
$\l_0+\mu_0=t-1$ such that $\a_{t-1}^{\l_0,\mu_0}=1$. 
As $\a_t^{r,s}=0$ whenever $r\le l-1$, this means that actually $\a_{t-1}^{r,s}=1$ for
all $r\le l-1$, $s\le m$ satisfying $r+s=t-1$.
In particular, $\a_{t-1}^{l-1,t-l}=1$. But $\a_{t-1}^{l-1,t-l}=\b_t^{l-1,t-l+1}$,
and since $t-l+1=m-j+1\le m$, this contradicts the assumption that $\b_t^{l-1,s}=0$ for
all $s\le m$. 
This establishes the inequality $\b_t^{l,t+j-l}=1 \le \a_t^{l,t+j-l}$, concluding the
proof of the proposition.
\end{proof}

The last step in establishing the proof of Theorem~\ref{mainthm} is to determine the Loewy
length of a tensor product of two bands with simple top and socle.

First, we set some notation for the remainder of this section:
take $u_a$ and $u_b$ to be top basis elements in the modules $M(A_{l_1})$ and $M(B_{l_2})$
respectively, set $u'=u_a+u_b$ and $M'=\<u'\>\subset M(A_{l_1})\oplus M(B_{l_2})$.
Then $M'\simeq M(A_{l_1}B_{l_2}\inv)$, and setting 
$M=M'/U$, where 
$U=\<A_{l_1}u'+\rho B_{l_2}u'\>=\<A_{l_1}u_a+\rho B_{l_2}u_b\>\subset M'$ and 
$\rho\in k\minus\{0\}$, we have $M\simeq M(A_{l_1}B_{l_2}\inv,\rho)$.
Similarly, $v_a$ and $v_b$ are top basis element of $M(A_{m_1})$ respectively
$M(B_{m_2})$, $v'=v_a+v_b$, $N'=\<v'\>\subset M(A_{m_1})\oplus M(B_{m_2})$, and 
$N=N'/V$ for $V=\<A_{m_1}v'+\s B_{m_2}v'\>\subset N'$, 
$\s\in k\minus\{0\}$.
In this way, $N'\simeq M(A_{m_1}B_{m_2}\inv)$ and $N\simeq M(A_{m_1}B_{m_2}\inv,\s)$.
We denote by $\B_{M'}$ and $\B_{N'}$ the standard bases of $M'$ and $N'$ containing $u'$
and $v'$ respectively, and by $\B_M$ and $\B_N$ the corresponding standard bases in $M$
and $N$.
The images of $u'$ and $v'$ under the quotient projections $M'\to M$ and
$N'\to N$ are denoted by $u$ and $v$ respectively.

\begin{prop} \label{differentlegs}
Assume that $l_1\ne l_2$ and $m_1\ne m_2$. Then $\ell(M\tp N)=\ell(M'\tp N')$.
\end{prop}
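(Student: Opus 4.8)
The inequality $\ell(M\tp N)\le\ell(M'\tp N')$ is immediate: with the identifications fixed above one has $M\tp N\simeq(M'\tp N')/W$, where $W=U\tp N'+M'\tp V$, so $M\tp N$ is a quotient of $M'\tp N'$. The plan for the reverse inequality is to establish it in two steps: $\ell(M\tp N')=\ell(M'\tp N')$ first, and $\ell(M\tp N)=\ell(M\tp N')$ second.

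For the first step, note that $N'=M(A_{m_1}B_{m_2}\inv)$ is a string module whose directed components are $A_{m_1}$ and $B_{m_2}\inv$; hence, by Proposition~\ref{subquocor}:1 and commutativity of $\tp$, $\ell(X\tp N')=\max\{\ell(X\tp M(A_{m_1})),\,\ell(X\tp M(B_{m_2}))\}$ for every $X\in\modu(kD_{4q})$. I would apply this with $X=M$ and with $X=M'$. By Proposition~\ref{withuniserial}, applied directly and after the automorphism $X\leftrightarrow Y$ of $kD_{4q}$ (which interchanges $A_t$ and $B_t$), one gets $\ell(M\tp M(A_{m_1}))=\ell(M'\tp M(A_{m_1}))$ and $\ell(M\tp M(B_{m_2}))=\ell(M'\tp M(B_{m_2}))$: the exceptional case of Proposition~\ref{withuniserial} requires the two exponents of the band factor to coincide, hence $l_1=l_2$, which is excluded by hypothesis. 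Combining, $\ell(M\tp N')=\ell(M'\tp N')$.

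The second step is proved by imitating the argument in the proof of Proposition~\ref{withuniserial}. Both $M$ and $N'$ have simple top, so $M\tp N'$ is cyclic, generated by $g=u\tp v'$, and therefore $\rad^t(M\tp N')=kD_{4q}\cdot A_tg+kD_{4q}\cdot B_tg$ for every $t$. Since $M\tp N\simeq(M\tp N')/(M\tp V)$, it suffices to prove that $\rad^t(M\tp N')\subseteq M\tp V$ forces $\rad^t(M\tp N')=0$, for which it is enough to show that $A_tg\in M\tp V$ implies $A_tg=0$, and symmetrically for $B_tg$. Decomposing along $N'\subseteq M(A_{m_1})\oplus M(B_{m_2})$, the condition $A_tg\in M\tp V$ says that there is a single $m_0\in M$ with $A_t(u\tp v_a)=m_0\tp A_{m_1}v_a$ in $M\tp M(A_{m_1})$ and $A_t(u\tp v_b)=\s\,m_0\tp B_{m_2}v_b$ in $M\tp M(B_{m_2})$. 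Lifting through $M=M'/U$ and decomposing along $M'\subseteq M(A_{l_1})\oplus M(B_{l_2})$, each of these becomes a pair of identities in $M(A_{l_1})\tp M(A_{m_1})$ and $M(B_{l_2})\tp M(A_{m_1})$ (respectively with $M(B_{m_2})$). Through the isomorphisms of Proposition~\ref{fiso} — with their $X\leftrightarrow Y$-twists on the $B$-leg blocks, where the counting function from Lemma~\ref{Qlma} is replaced by a shift $Q_{t-1}$ — these identities assert that the relevant coefficients $Q_t^{(i,j)}$ (resp.\ $Q_{t-1}^{(i,j)}$ on the $B$-leg blocks) of $\omega^t(0,0)$ vanish modulo two off the two lines $\{i=l_1\}$, $\{j=m_1\}$ in the $A$-leg-of-$M$ blocks, and off $\{i=l_2\}$, $\{j=m_1\}$ in the $B$-leg-of-$M$ blocks. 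One of $A_tg,B_tg$ is nonzero (since $\ell(M\tp N')=t+1$ when $t$ is chosen maximally), so some block is nonzero; Proposition~\ref{backdiag} then pushes its surviving coefficients onto the back diagonal $i+j=t$, and comparing the two blocks that share the factor $M(A_{m_1})$ shows the supports to be simultaneously admissible only when $l_1=l_2$. Since $l_1\ne l_2$, every block is forced to vanish, so $A_tg=0$ and likewise $B_tg=0$; thus $\ell(M\tp N)=\ell(M\tp N')=\ell(M'\tp N')$.

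I expect the main obstacle to be the second step: the block identities coming from $A_tg\in M\tp V$ are coupled — through the single element $m_0$ and through the gluing element of $U$ — so the admissible supports of the $Q$-coefficients in different blocks cannot be chosen independently, and the back-diagonal bookkeeping of Proposition~\ref{backdiag} must be run carefully on the $B$-leg blocks, where the shifted function $Q_{t-1}$ occurs. The first step, by contrast, is a formal consequence of Propositions~\ref{subquocor} and \ref{withuniserial}.
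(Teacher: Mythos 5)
Your Step~1 is fine, and your reduction of the problem to the single un-gluing $\ell(M\tp N)=\ell(M\tp N')$ is a legitimate alternative strategy; but Step~2, which carries all the weight, has a genuine gap. First, the claim that $M\tp N'$ is cyclic because $M$ and $N'$ have simple top is false: over a group algebra the tensor product of two modules with simple top need not have simple top (e.g.\ $kG\tp kG$ is free of rank $|G|$). Consequently $\rad^t(M\tp N')$ is \emph{not} generated by $A_t g$ and $B_t g$ with $g=u\tp v'$, and neither of the two uses you make of this -- that $\rad^t(M\tp N')\subseteq M\tp V$ can be tested on $g$, and that $\rad^t(M\tp N')\ne0$ forces $A_tg\ne0$ or $B_tg\ne0$ -- is justified. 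To salvage it you would need the separate verification that $h(a\tp b)\le h(u\tp v')$ for every homogeneous basis element $a\tp b$, as is done explicitly in the proofs of Propositions~\ref{withuniserial} and \ref{bandband}; this is plausible here but not automatic (for pure top elements it can fail, cf.\ the proof of Proposition~\ref{uniserprop}). Second, the core deduction of Step~2 is only gestured at, and as stated it aims at the wrong parameters: since it is $V=\<A_{m_1}v'+\s B_{m_2}v'\>$ that is being factored out, the leading-degree argument (the mirror image of the proof of Proposition~\ref{differentlegs2}) should produce $t=\mu+m_1$ from the block involving $A_{m_1}v_a$ and $t=\mu+m_2$ from the block involving $B_{m_2}v_b$, where $\mu$ is the radical degree of your $m_0$, contradicting $m_1\ne m_2$. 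Your claim that ``comparing the two blocks that share the factor $M(A_{m_1})$ shows the supports to be simultaneously admissible only when $l_1=l_2$'' is unsubstantiated: the two legs of $M$ are coupled only through the one-dimensional identification coming from $U$ in the socle, and no such comparison forces $l_1=l_2$. Moreover, because the first factor is the glued band $M$ rather than the string $M'$, applying Proposition~\ref{backdiag} leg by leg requires handling the ambiguity modulo $U\tp N'$ created by the relation $A_{l_1}u=\rho B_{l_2}u$ -- precisely the bookkeeping you defer, and the part where cancellations could in principle spoil the leading-term argument.

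For comparison, the paper avoids this analysis altogether. It sandwiches $h(M\tp N)$ between $\max\{h((\rad M)\tp N),\,h((M/\soc M)\tp N)\}$ and $h(M'\tp N')$; the extremal case $h(M'\tp N')=l+m$ (with $l=\max\{l_1,l_2\}$, $m=\max\{m_1,m_2\}$) is settled by observing that $\rad^{l+m}(M'\tp N')$ is spanned by $A_{l+m}(x\tp y)$ and $B_{l+m}(x\tp y)$ and cannot lie in $U\tp N'+M'\tp V$ unless it is zero, while in the remaining case $l\nperp m$ the lower bound is computed via Propositions~\ref{subquocor}, \ref{withuniserial} and \ref{uniserprop} together with Lemma~\ref{hashcor} to be $\min\{1+l\#m,\,l+m-1\}$, which already dominates $h(M'\tp N')$. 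If you want to keep your two-step route, you must (i) replace the cyclicity claim by the explicit domination check for $u\tp v'$, and (ii) run the backdiagonal argument against $m_1\ne m_2$, with the $U$-ambiguity in the band factor controlled as in the proofs of Propositions~\ref{withuniserial} and \ref{differentlegs2}.
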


\begin{proof}
Let $\overline{M}=\rad M\subset M$, and $\underline{M}=M/\soc M$.
Set $l=\max\{l_1,l_2\}$ and $m=\max\{m_1,m_2\}$, and let $x\in\{u_a,u_b\}$,
$y\in\{v_a,v_b\}$ be the unique elements such that $h(x)=l$ and $h(y)=m$.
Clearly,
\begin{equation} \label{inequality}
\max\{h(\overline{M}\tp N),h(\underline{M}\tp N)\}\le h(M\tp N)\le h(M'\tp N').
\end{equation}

Observe that
\begin{align*}
\rad^{l+m}(M'\tp N')&= \spann\{A_{l+m}\cdot(u'\tp v'), B_{l+m}\cdot(u'\tp v')\} \\
&=\spann\{A_{l+m}\cdot(x\tp y), B_{l+m}\cdot(x\tp y)\} 
\end{align*}
which is contained in $U\tp N'+ M'\tp V$ only if it is zero.
This means that $h(M\tp N)=l+m$ if, and only if, $h(M'\tp N')=l+m$, and hence the
proposition holds true in case $h(M'\tp N')=l+m$.

Assume instead that $h(M'\tp N')\le l+m-1$. Then, in particular, $l\#m\le l+m-1$, that is,
$l\not\perp m$. From Proposition~\ref{uniserprop} follows that 
$h(M'\tp N')\le 1+l\#m \le \min\{1+l\#m, l+m-1\}$.

By Proposition~\ref{subquocor} and Proposition~\ref{withuniserial},
\begin{align*}
&\max\{h(\overline{M}\tp N),h(\underline{M}\tp N)\}=
\max\{M(A_{l_i-1})\tp N, M(B_{l_i-1})\tp N \mid i=1,2 \} \\
=&\max\{M(A_{l-1})\tp N, M(B_{l-1})\tp N \} = \min\{1+(l-1)\#m, l+m-1\}.
\end{align*}
Since $l\not\perp m$, Lemma~\ref{hashcor} gives $(l-1)\#m=l\#m$, hence
$$\max\{h(\overline{M}\tp N),h(\underline{M}\tp N)\}= \min\{1+l\#m, l+m-1\} \ge 
h(M'\tp N').$$
This, together with the inequality (\ref{inequality}) shows that 
$h(M\tp N)=h(M'\tp N)=h(M'\tp N')$ in case $h(M'\tp N')\le l+m-1$, concluding the proof.
\end{proof}

\begin{prop} \label{differentlegs2}
  If $l_1\ne l_2$ and $m_1=m_2$, then $\ell(M\tp N)=\ell(M'\tp N)$.
\end{prop}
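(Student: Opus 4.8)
The plan is to argue roughly as in the proof of Proposition~\ref{differentlegs}, but keeping $N$ fixed and collapsing only $M'$ to $M=M'/U$. The inclusion $\ell(M\tp N)\le\ell(M'\tp N)$ is immediate, since $M\tp N\cong(M'\tp N)/(U\tp N)$ is a quotient of $M'\tp N$. For the reverse inequality, set $h=h(M'\tp N)$; as $M'$ and $N$ both have simple top, $M'\tp N$ is generated by $u'\tp v$ and $\rad^h(M'\tp N)=\spann\{A_h\cdot(u'\tp v),\ B_h\cdot(u'\tp v)\}$ is nonzero, so it suffices to show that this space is not contained in $U\tp N$ — its image in $M\tp N$ is then a nonzero element of $\rad^h(M\tp N)$, forcing $h(M\tp N)=h$. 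I would carry out this analysis inside $\bigl(M(A_{l_1})\oplus M(B_{l_2})\bigr)\tp N=\bigl(M(A_{l_1})\tp N\bigr)\oplus\bigl(M(B_{l_2})\tp N\bigr)$, where $u'\tp v=(u_a\tp v)+(u_b\tp v)$ and $U\tp N=\{(A_{l_1}u_a\tp n,\ \rho B_{l_2}u_b\tp n)\mid n\in N\}$.

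By Proposition~\ref{subquocor}, $h=\max\{h(M(A_{l_1})\tp N),\ h(M(B_{l_2})\tp N)\}$, and the first step is to split on whether this maximum is attained by one or by both of the two terms. If it is attained by only one, say $h(M(A_{l_1})\tp N)=h>h(M(B_{l_2})\tp N)$, then $\rad^h\cdot(u_b\tp v)=0$ (since $u_b\tp v$ generates $M(B_{l_2})\tp N$), hence $\rad^h(M'\tp N)\subseteq M(A_{l_1})\tp N$; since $\bigl(M(A_{l_1})\tp N\bigr)\cap(U\tp N)=0$ — an element of $U\tp N$ with vanishing $M(B_{l_2})$-component is $0$, because $\rho B_{l_2}u_b\ne0$ — and $\rad^h(M'\tp N)\ne0$, this case is done; the other one-sided case is symmetric. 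This is the easy part, playing the role of the ``$h(M'\tp N')=l+m$'' case of Proposition~\ref{differentlegs}.

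The substantial case is $h(M(A_{l_1})\tp N)=h(M(B_{l_2})\tp N)=h$, and the plan there is to assume $\rad^h(M'\tp N)\subseteq U\tp N$ and to derive $l_1=l_2$, a contradiction. Comparing components, there exist $n,n'\in N$, not both zero, with $A_h\cdot(u_a\tp v)=A_{l_1}u_a\tp n$, $A_h\cdot(u_b\tp v)=\rho B_{l_2}u_b\tp n$, and likewise with $B_h$ and $n'$; after possibly interchanging $A_h$ with $B_h$ I may assume $n\ne0$, so both $A_{l_1}u_a\tp n$ and $\rho B_{l_2}u_b\tp n$ are nonzero. The heart of the proof — and the step I expect to require the most care — is to show that the radical depth $\max\{d\mid n\in\rad^dN\}$ of $n$ in $N$ is \emph{exactly} $h-l_1$. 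The upper bound is easy: $A_{l_1}u_a\in\soc M(A_{l_1})=\rad^{l_1}M(A_{l_1})$, so if $n\in\rad^{\,h-l_1+1}N$ then $A_{l_1}u_a\tp n\in\rad^{l_1}M(A_{l_1})\tp\rad^{\,h-l_1+1}N\subseteq\rad^{h+1}(M(A_{l_1})\tp N)=0$ (here $\ell(M(A_{l_1})\tp N)=h+1$ since we are in the present case), contradicting $A_{l_1}u_a\tp n\ne0$. For the lower bound I would use $\rad^t(M(A_{l_1})\tp N)=\sum_{i+j=t}\rad^iM(A_{l_1})\tp\rad^jN$ together with the fact that, $M(A_{l_1})$ being uniserial, $\rad^iM(A_{l_1})$ contains $A_{l_1}u_a$ exactly for $i\le l_1$; comparing the coefficient of $A_{l_1}u_a\tp(-)$ in a representation of $A_{l_1}u_a\tp n\in\rad^h(M(A_{l_1})\tp N)$ then yields $n\in\rad^{\,h-l_1}N$. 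Running the identical argument with $M(B_{l_2})$ in place of $M(A_{l_1})$ shows that this depth also equals $h-l_2$, so $l_1=l_2$, contrary to hypothesis. Hence $\rad^h(M'\tp N)\not\subseteq U\tp N$, and $\ell(M\tp N)=h+1=\ell(M'\tp N)$.
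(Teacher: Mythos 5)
Your skeleton does follow the paper's: assume $\rad^t(M'\tp N)$ is nonzero but contained in $U\tp N$, compare the two components inside $\bigl(M(A_{l_1})\oplus M(B_{l_2})\bigr)\tp N$ to extract a common nonzero $n\in N$, and pin down the radical depth of $n$ from each leg to force $l_1=l_2$. But the step you yourself call the heart of the proof contains a genuine gap. Your upper bound on the depth of $n$ rests on the containment $\rad^{l_1}M(A_{l_1})\tp\rad^{\,h-l_1+1}N\subseteq\rad^{h+1}\bigl(M(A_{l_1})\tp N\bigr)$, i.e.\ on $\rad^iM\tp\rad^jN\subseteq\rad^{i+j}(M\tp N)$. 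Only the opposite inclusion $\rad^n(M\tp N)\subseteq\sum_{i+j=n}\rad^iM\tp\rad^jN$ is true (it is what makes your lower bound, depth $\ge h-l_1$, correct); the one you use fails already for $\soc M(A_1)\tp\soc M(A_1)$, which is nonzero although $\rad^2\bigl(M(A_1)\tp M(A_1)\bigr)=0$ since $\ell\bigl(M(A_1)\tp M(A_1)\bigr)=2$. Its failure is precisely why Loewy lengths of tensor products can drop below $\ell(M)+\ell(N)-1$, i.e.\ why the theorem is nontrivial. Concretely, the instance you need would say $A_{l_1}u_a\tp n=0$ for every $n\in\rad^{\,h-l_1+1}N$; since an elementary tensor of nonzero vectors is never zero, this amounts to $\rad^{\,h-l_1+1}N=0$, i.e.\ $h\ge l_1+m_1$, which is false in general (for instance $l_1=m_1=3$ gives $h=4<6$ by Proposition~\ref{withuniserial} and Theorem~\ref{mainthm}:1). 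So the exactness of the depth is not established and no contradiction with $l_1\ne l_2$ follows. The paper obtains exactly this exactness from Proposition~\ref{backdiag}: expanding $A_t(u_a\tp v)$ with the coefficients $Q_t^{(i,j)}$, the vanishing of all components with $i<l_1$ together with Proposition~\ref{backdiag} and (\ref{notime}) forces a nonzero component of $n$ at depth exactly $t-l_1$, and likewise $t-l_2$ on the other leg. Some such combinatorial input is needed; a formal radical-layer identity cannot replace it.

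A secondary flaw: $u'\tp v$ does not generate $M'\tp N$ in general (a tensor product of cyclic modules need not be cyclic; $M(A_1)\tp M(A_1)$ is four-dimensional of Loewy length $2$), so your identity $\rad^h(M'\tp N)=\spann\{A_h\cdot(u'\tp v),B_h\cdot(u'\tp v)\}$ is unjustified, and with it the claim that $n,n'$ are not both zero; the parenthetical in your Case~1 ($u_b\tp v$ generating $M(B_{l_2})\tp N$) fails for the same reason, though there the true fact $\rad^h\bigl(M(B_{l_2})\tp N\bigr)=0$ suffices. This part is repairable: $\rad^h(M'\tp N)=\sum_{a\in\B_{M'},\,b\in\B_N}\rad^h\<a\tp b\>$, and for $a\ne u'$ the element $a\tp b$ lies in a single summand of $\bigl(M(A_{l_1})\oplus M(B_{l_2})\bigr)\tp N$, while $U\tp N$ meets each summand trivially; hence any nonzero piece of $\rad^h(M'\tp N)$ lying in $U\tp N$ must come from some $u'\tp b$ with $b\in\B_N$, and the depth argument should be run for that $b$. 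But even after this repair, the exact-depth step still requires the Proposition~\ref{backdiag} machinery, so as it stands the proposal does not prove the statement.
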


\begin{proof}
We have $\ell(M\tp N)<\ell(M'\tp N)$ if, and only if, there exists a number $t\in\N$ such
that $\rad^t(kD_{4q})\cdot(u'\tp v)$ is non-zero but contained in $U\tp N$. This is
equivalent to $A_t(u'\tp v)=(A_{l_1}u'+\rho B_{l_2}u')\tp n_1$ and 
$B_t(u'\tp v)=(A_{l_1}u'+\rho B_{l_2}u')\tp n_2$ with $n_1,n_2\in N$ not both equal to
zero.

Assume that $\ell(M\tp N)<\ell(M'\tp N)$, and let $n\in\{n_1,n_2\}$ be non-zero.
Set $\mu$ to be the largest number such that $n\in\rad^{\mu}(N)$.
Then $n=\sum_{j=\mu}^{m_1}\gamma_jA_jv + \sum_{j=\mu}^{m_1}\gamma'_jB_jv$, with either
 $\gamma_\mu$ or $\gamma'_\mu$ non-zero.
Now Proposition~\ref{backdiag} implies that $l_1+\mu=t=l_2+\mu$, hence $l_1=l_2$.
\end{proof}

From here on, assume that $l=l_1=l_2$ and $m=m_1=m_2$. 

\begin{prop} \label{bandband}\;\;\hfill
\begin{enumerate}
\item If $l\nperp m$, $l\nperp(m-1)$, $(l-1)\nperp m$ then $\ell(M\tp N)=
  2+(l-1)\#(m-1)=2+l\#m$.
\item If $l\perp m$, $(l-1)\perp m$, then
  $\ell(M\tp N)=
  \begin{cases}
    2+(l-1)\#(m-1) &\mbox{if}\quad \s=1,\\
    l+m+1 &\mbox{otherwise.}
  \end{cases}$
\item If $l\perp m$, $l\perp(m-1)$, then
  $\ell(M\tp N)=
  \begin{cases}
    2+(l-1)\#(m-1) &\mbox{if}\quad \rho=1,\\
    l+m+1 &\mbox{otherwise.}
  \end{cases}$
\item If $(l-1)\perp m$, $l\perp(m-1)$, then
  $\ell(M\tp N)=
  \begin{cases}
    2+(l-1)\#(m-1) &\mbox{if}\quad \rho=\s=1,\\
    l+m            &\mbox{if}\quad \rho=\s\ne1,\\
    l+m+1 &\mbox{otherwise.}
  \end{cases}$
\end{enumerate}
\end{prop}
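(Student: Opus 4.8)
The plan is to imitate the proof of Proposition~\ref{withuniserial}, now with both tensor factors band modules, so that the passage from $M'\tp N'$ to $M\tp N$ has to be controlled at the two socle elements of $M'$ and at the two socle elements of $N'$ simultaneously. With the notation fixed before the statement, $M\tp N\cong(M'\tp N')/W$ where $W=\langle w_0\tp v'\rangle+\langle u'\tp v_0\rangle$, $w_0=A_lu_a+\rho B_lu_b\in\soc M'$, $v_0=A_mv_a+\sigma B_mv_b\in\soc N'$; since $w_0$ and $v_0$ lie in the socles, $\langle w_0\tp v'\rangle=w_0\tp N'$ and $\langle u'\tp v_0\rangle=M'\tp v_0$. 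Hence
\[h(M\tp N)=\max\{\tau\in\N\mid A_\tau\cdot(u'\tp v')\notin W\text{ or }B_\tau\cdot(u'\tp v')\notin W\},\]
and in particular $h(M\tp N)\le h(M'\tp N')$. By Propositions~\ref{subquocor} and~\ref{uniserprop}, $h(M'\tp N')$ is the largest of $h(M(A_l)\tp M(A_m))$, $h(M(A_l)\tp M(B_m))$, $h(M(B_l)\tp M(A_m))$ and $h(M(B_l)\tp M(B_m))$, which works out to $1+l\#m$ in case~(1) and to $l+m$ in cases~(2)--(4) (using, in case~(4), that $(l-1)\perp m$ and $l\perp(m-1)$ force $l\#m=l+m-1$, \cf\ the examples following Theorem~\ref{mainthm}). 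For a lower bound, $\rad M$ has submodules isomorphic to the uniserial modules $M(A_{l-1})$ and $M(B_{l-1})$ (its two legs) and $\rad N$ has submodules isomorphic to $M(A_{m-1})$ and $M(B_{m-1})$, so $h(M\tp N)$ is at least the largest of $h(M(A_{l-1})\tp N)$, $h(M(B_{l-1})\tp N)$, $h(M\tp M(A_{m-1}))$ and $h(M\tp M(B_{m-1}))$, each of which is evaluated by Propositions~\ref{subquocor} and~\ref{withuniserial}. In case~(1) and in the subcases of (2)--(4) where no drop occurs this lower bound already reaches the claimed value: for instance in case~(1) one has $l\nperp m$, hence $(l-1)\#m=l\#m$ by Lemma~\ref{hashcor}, so $h(M(A_{l-1})\tp N)=1+l\#m=h(M'\tp N')$.

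The remaining work is to pin down $h(M\tp N)$ in the subcases where it drops below $h(M'\tp N')$, and this rests on a direct evaluation of $A_\tau\cdot(u'\tp v')$ and $B_\tau\cdot(u'\tp v')$ in the homogeneous basis of $M'\tp N'$. Writing $u'\tp v'=\sum_{x,y\in\{a,b\}}u_x\tp v_y$, the summand $A_\tau\cdot(u_x\tp v_y)$ lies in the ``corner'' $M(A_l\text{ or }B_l)\tp M(A_m\text{ or }B_m)$ and, by the structural results of Section~\ref{quiversetup} together with their variants for the mixed corners (where the purity or impurity of the homogeneous basis elements shifts the relevant length parameters by one), has $\mathbb{F}_2$-coefficients expressible through the numbers $Q_\tau^{(i,j)}$ of Lemma~\ref{Qlma}. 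On the other side, $W$ is spanned over $k$ by $w_0\tp v'$, the $w_0\tp A_jv_a$ and $w_0\tp B_jv_b$ ($1\le j\le m$), by $u'\tp v_0$, and the $A_iu_a\tp v_0$ and $B_iu_b\tp v_0$ ($1\le i\le l$); thus $W$ lies in the ``border'' of the four corner grids --- the rows $i=l$ and the columns $j=m$ --- and it glues those borders pairwise: the $i=l$ rows of the $aa$- and $ba$-corners (respectively of the $ab$- and $bb$-corners) are identified up to the scalar $\rho$, while the $j=m$ columns of the $aa$- and $ab$-corners (respectively of the $ba$- and $bb$-corners) are identified up to $\sigma$.

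Consequently $A_\tau\cdot(u'\tp v')\in W$ forces, first, that its interior part --- the positions $i<l$, $j<m$ in every corner --- vanishes, which by Proposition~\ref{backdiag}, Lemma~\ref{specials} and Lemma~\ref{hashcor} confines $\tau$ to the range bounded by the values in the statement (and dually for $B_\tau\cdot(u'\tp v')$), and, second, that the surviving border coefficients are compatible with the $\rho$- and $\sigma$-gluing; the scalars enter only through the second condition. In case~(3) the interior-vanishing condition kills every corner except $M(A_l)\tp M(B_m)$ near the top value of $\tau$, and the remaining border term there can be absorbed into $W$ only when $\rho=1$, which gives the drop to $2+(l-1)\#(m-1)$, whereas for $\rho\ne1$ the Loewy length stays at $\ell(M'\tp N')=l+m+1$; case~(2) is the image of case~(3) under $M\tp N\cong N\tp M$, which interchanges these two cases together with the roles of $\rho$ and $\sigma$. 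In the self-symmetric case~(4), two border terms survive near $\tau=l+m$, one in the $ab$-corner and one in the $ba$-corner; absorbing them into $W$ by means of $w_0\tp B_mv_b$ and $B_lu_b\tp v_0$ leaves the term $B_lu_b\tp B_mv_b$, whose coefficient is $\rho+\sigma$, so the length stays $l+m+1$ if $\rho\ne\sigma$, drops to $l+m$ if $\rho=\sigma\ne1$, and drops further to $2+(l-1)\#(m-1)$ precisely when $\rho=\sigma=1$. In each subcase the lower bound of the first paragraph matches the value so obtained.

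The main obstacle is the bookkeeping underlying the last two paragraphs: tracking the border coefficients of all four corners at once, reconciling them through the $\rho$- and $\sigma$-gluings, and feeding the resulting congruences --- binomial coefficients such as $\binom{\tau}{l}$ and $\binom{\tau-1}{l}$ and their $Q_\tau^{(i,j)}$-analogues --- into Proposition~\ref{backdiag} and Lemma~\ref{specials}. This is a considerably more intricate version of the single alternative ``$\rho=1$ or not'' that arose in the proof of Proposition~\ref{withuniserial}, the new phenomenon being the three-way split $\rho=\sigma=1$ / $\rho=\sigma\ne1$ / $\rho\ne\sigma$ produced by the two independent gluings present in case~(4).
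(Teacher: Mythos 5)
Your outline coincides with the paper's own strategy: realise $M\tp N$ as $(M'\tp N')/(U\tp N'+M'\tp V)$, sandwich $h(M\tp N)$ between leg subquotients (via Propositions~\ref{subquocor} and~\ref{withuniserial}) and $h(M'\tp N')$, and settle the borderline cases by reading off the socle (``border'') coefficients of $A_\tau\cdot(u'\tp v')$, $B_\tau\cdot(u'\tp v')$ against the $\rho$- and $\sigma$-gluing; your $\tau=l+m$ analysis correctly reproduces the coefficients $\sigma\rho\delta_1+\rho\delta_2+\sigma\delta_3$ and $\delta_1+\sigma\delta_2+\rho\delta_3$ of (\ref{Alm1})--(\ref{Blm2}) and hence the $\rho+\sigma$ criterion in case~(4). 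The genuine gap is in the drop subcases ($\s=1$ in (2), $\rho=1$ in (3), $\rho=\s=1$ in (4)): there the claimed value $2+(l-1)\#(m-1)$ can lie far below $l+m+1$ (in case~(4) the difference is $2^a$ when $l=\lambda+2^a$, $m=\mu+2^a$; in case~(3) already $l=8$, $m=2$, $\rho=1$ gives $\ell=9$ versus $l+m+1=11$), so absorbing the border terms ``near $\tau=l+m$'' only yields $h\le l+m-1$ and does not prove the statement. One must show $A_t\cdot(u\tp v)=B_t\cdot(u\tp v)=0$ already at $t=2+(l-1)\#(m-1)$. In cases (2)--(3) the paper gets this for free from Proposition~\ref{withuniserial} applied to $M'\tp N$ (giving $h(M'\tp N)=1+l\#(m-1)=1+(l-1)\#(m-1)$ when the relevant scalar is $1$); in case~(4) with $\rho=\s=1$ no such reduction is available, and the paper needs a further split into $(l-2)\nperp m$, $l=1$ and $(l-2)\perp m$, where the last subcase requires first the inequalities $t>h(\underline{M}\tp N)$, $t>h(M\tp\underline{N})$ to push $\rad^t(M'\tp N')$ into $U\tp N'+M'\tp V+\soc M'\tp\soc N'$ as in (\ref{radinsoc}), and then the explicit mod-$2$ evaluations $Q_t^{(l,m)}\equiv0$, $Q_{t-1}^{(l-1,m)}\equiv Q_{t-1}^{(l,m-1)}\equiv1$ of (\ref{alpha})--(\ref{gamma}). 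Your sketch gestures at ``interior vanishing confines $\tau$'' but neither identifies these subcases nor carries out (or reduces to) these computations, and they are the crux of the proposition.

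Two smaller points. Your displayed identity $h(M\tp N)=\max\{\tau\mid A_\tau\cdot(u'\tp v')\notin W\ \mbox{or}\ B_\tau\cdot(u'\tp v')\notin W\}$ amounts to $h(M\tp N)=h(u\tp v)$ and needs the complementary bound $h(a\tp b)\le$ the claimed value for all other homogeneous basis elements; the paper supplies this through $h((\rad M)\tp N)$ and $h(M\tp(\rad N))$ via Proposition~\ref{subquocor}, and your leg computations could be repurposed for it, but as written it is asserted without argument. Also, the claim that in the no-drop subcases of (2)--(4) the leg lower bound ``already reaches the claimed value'' is false: the legs give at most $l+m-1$ while the claimed $h$ there is $l+m$; the correct lower bound is precisely the non-vanishing of the surviving border term $B_lu\tp B_mv$, which your later paragraphs do provide, so this is a misstatement rather than a fatal error.
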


\begin{proof}
We shall have reason to use the following formulae:
\begin{align}
A_{l+m}\cdot(u'\tp v') &= \d_1 A_lu'\tp A_mv' + 
\d_2 A_lu'\tp B_mv' + \d_3  B_lu'\tp A_mv', \label{Alm} \\
B_{l+m}\cdot(u'\tp v') &= \d_1  B_lu'\tp B_mv' + 
\d_2  B_lu'\tp A_mv' + \d_3 A_lu'\tp B_mv', \label{Blm}
\end{align}
where $\d_1=\d_{l\#m,l+m}$, $\d_2=\d_{(l-1)\#m,l+m-1}$ and $\d_3=\d_{l\#(m-1),l+m-1}$. 
From this follows that
\begin{align}
A_{l+m}\cdot(u\tp v) &= (\sigma\rho\d_1 + \rho\d_2 + \sigma\d_3)  B_lu\tp B_mv, \label{Alm1} \\
B_{l+m}\cdot(u\tp v) &= (\d_1   + 
\sigma\d_2 + \rho\d_3) B_lu\tp B_mv, \label{Blm2}
\end{align}
Set $\underline{M}=M/\soc M = M'/\soc M'$ and 
$\underline{N}=N/\soc N = N'/\soc N'$.
Observe that by Proposition~\ref{subquocor} and Proposition~\ref{withuniserial},
\begin{align*}
h(\underline{M}\tp N)&=\max\{h(M(A_{l-1})\tp N),h(M(B_{l-1})\tp N)\} = h(M(A_{l-1})\tp N) \\
&=
\begin{cases}
  1+(l-1)\#(m-1) &\mbox{if}\quad \s=1,\: (l-1)\perp m,\: (l-2)\perp m, \\
  h(M(A_{l-1})\tp M(B_m)) &\mbox{otherwise.}
\end{cases}
\end{align*}
Moreover, as $\underline{M}$ is a quotient of $M$, which in turn is a quotient of $M'$, we have the
following sequence of inequalities:
$$1+(l-1)\#(m-1)\le h(\underline{M}\tp N)\le h(M\tp N)\le h(M'\tp N)\le 
h(M'\tp N')=\min\{l+m,1+l\#m\},$$
where the last identity follows from Proposition~\ref{uniserprop}.
This immediately gives the first statement of the proposition:
if $l\nperp m$ and $(l-1)\nperp m$ (and hence $l\nperp(m-1)$) then $1+(l-1)\#(m-1)=1+l\#m$
by Lemma~\ref{hashcor}:\ref{hashnotperp}, so $h(M\tp N)=1+(l-1)\#(m-1)=1+l\#m$.

Assume now that $l\perp m$, $(l-1)\perp m$ (hence $l\nperp (m-1)$). If $\s=1$, then
$h(M'\tp N)=1+l\#(m-1)=1+(l-1)\#(m-1)$ where Proposition~\ref{withuniserial} gives the
first identity and Lemma~\ref{hashcor} the second.
It follows that $h(M\tp N)=1+(l-1)\#(m-1)$ in this case.
If on the other hand $\s\ne1$, then $A_{l+m}(u\tp v)=\rho(\sigma+1)B_lu\tp B_mv\neq 0$,
hence $h(M\tp N)=l+m$.
This proves the second statement of the proposition, whence the third statement follows by
symmetry.

For the fourth statement, assume that $(l-1)\perp m$ and $l\perp(m-1)$. 
Then, from the equations (\ref{Alm1}) and (\ref{Blm2}) follows\[A_{l+m}(u\tp v)=B_{l+m}(u\tp v)=(\rho+\sigma)B_lu\tp B_mv\] which is zero if, and only if, $\rho=\sigma$.
It follows that $h(M\tp N)=l+m$ if $\rho\ne\s$.
Assume instead $\rho=\s$. 
Since $h(a\tp b)\le h(a)+h(b)<l+m$ for any $a\tp b\in\B_{M}\tp\B_{N}\minus\{u\tp v\}$,
we have $h(M\tp N) < l+m$ in this case.
On the other hand, if $\rho\ne1$ then, by Proposition~\ref{withuniserial} and
Proposition~\ref{uniserprop}, 
$$h(M\tp N)\ge h(M\tp \underline{N})=h(M\tp M(A_{m-1}))=h(M(B_l)\tp M(A_{m-1}))= l+m-1 \,.$$
Hence $h(M\tp N)=l+m-1$ if $\rho=\s\ne1$.

It remains to consider the case $\rho=\s=1$.
First, suppose that $(l-2)\nperp m$; then $l+m>h(M\tp N)\ge h(\underline{M}\tp N)=l+m-1$, so
$h(M\tp N)=l+m-1$. But $(l-2)\nperp m$ implies that $l=\l+1$ and $m=\mu +1$ where $\l$ and
$\mu$ are even, positive integers satisfying $\l\perp \mu$; this means that 
$1+(l-1)\#(m-1)=1+\l+\mu=l+m-1=h(M\tp N)$.

Next, let $l=1$. Then again $h(\underline{M}\tp N)=m=l+m-1=1+(l-1)\#(m-1)$, and so 
$h(M\tp N)=1+(l-1)\#(m-1)$.

Last, assume that $(l-2)\perp m$, $l\ge2$. Then $l=\l+2^a$, $m=\mu+2^a$,
where $a\ge1$, $\l\perp\mu$ and $2^{a+1}\mid\l,\mu$; hence also $l\perp(m-2)$
holds. Consequently, $h(\underline{M}\tp N)=h(M\tp\underline{N})=1+(l-1)\#(m-1)$.
Setting $t=2+(l-1)\#(m-1)= \l+\mu+2^a+1$, we want to show that 
$\rad^t(M'\tp N')\subset U\tp N' + M'\tp V$.  

From Proposition~\ref{subquocor} follows that $h((\rad{M})\tp N)=h(\underline{M}\otimes
N)=t-1$ and $h(M\tp (\rad{N}))=h(M\tp \underline{N})=t-1$.  Thus, $h(a\tp b)\leq t-1$ for
any $a\tp b\in\B_{M}\tp\B_{N}\minus\{u\tp v\}$.  
It remains only to prove that $A_t(u'\tp v')$ and $B_t(u'\tp v')$ lie in 
$U\tp N' + M'\tp V$. 

The inequality $t>h(\underline{M}\tp N)$ implies that 
$\rad^t(M'\tp N')\subset (\soc M')\tp N' + M'\tp V$, and $t>h(M\tp\underline{N})$ gives
$\rad^t(M'\tp N')\subset U\tp N' + M'\tp(\soc N')$. Hence 
\begin{equation} \label{radinsoc}
\rad^t(M'\tp N')\subset U\tp N' + M'\tp V +(\soc M')\tp(\soc N').
\end{equation}

The restriction of the basis $\B_{M'}\tp\B_{N'}$ of $M'\tp N'$ to $\soc M'\tp\soc N'$ is
a basis of the latter, and the expansion of $A_t(u'\tp v')$ in $\B_{M'}\tp\B_{N'}$ has the
form 
$A_t\cdot(u'\tp v') = z+w$, where 
\begin{align*}
  z&=\a A_lu'\tp A_mv'+ \b A_lu'\tp B_mv'+\gamma B_lu'\tp A_mv' \in \soc M' \tp \soc N' \\
\intertext{with}
\a&=Q_t^{(l,m)},\quad \b=Q_{t-1}^{(l-1,m)},\quad
\gamma=Q_{t-1}^{
(l,m-1)}, \\
\intertext{and}
w&\in\spann\left((\B_{M'}\tp\B_{N'})\minus(\soc M'\tp\soc N')\right).
\end{align*}
By (\ref{radinsoc}), $w\in U\tp N' +M'\tp V$, so to show 
$A_t\cdot(u'\tp v')\in U\tp N' +M'\tp V$ it is enough to prove that 
$z\in U\tp N'+ M'\tp V$.

Remember that $l=\l+2^a$, $m=\mu+2^a$, with $\l\perp\mu$, \: $2^{a+1}\mid\l,\mu$ and
$a\ge1$. 
Setting $j=l+m-t$ we have
\begin{align}
\a&= Q_t^{(l,m)}\equiv\binom{t+j}{l+j}\binom{l+j}{2j}=
\binom{\l+\mu +2^{a+1}}{\l+2^{a+1}-1}\binom{\l+2^{a+1}-1}{2^{a+1}-2}\equiv 0, \label{alpha} \\ 
\intertext{since $\l+2^{a+1}-1\equiv1$, $\l+\mu+2^{a+1}\equiv0$, implying
$\binom{\l+\mu +2^{a+1}}{\l+2^{a+1}-1}\equiv0$,}
\b&= Q_{t-1}^{(l-1,m)}= \binom{t-1+j}{l-1+j}\binom{l-1+j}{2j}=
\binom{\l+\mu+2^{a+1}-1}{\l+2^{a+1}-2}\binom{\l+2^{a+1}-2}{2^{a+1}-2}\equiv 1, \label{beta}\\
\gamma&= Q_{t-1}^{(l,m-1)}=\binom{t-1+j}{l+j}\binom{l+j}{2j}=
\binom{\l+\mu+2^{a+1}-1}{\l+2^{a+1}-1}\binom{\l+2^{a+1}-1}{2^{a+1}-2}\equiv 1, \label{gamma}
\end{align}
where Lucas' theorem and the observation that 
$\binom{2^{a+1}-1}{2^{a+1}-2}=\binom{\sum_{i=1}^a2^i}{\sum_{i=2}^a2^i}\equiv1$ are used in
the two latter calculations.
This means that $z=A_lu'\tp B_mv'+B_lu'\tp A_mv' \in U\tp N' + M'\tp V$, proving that
$A_t\cdot(u'\tp v')\in U\tp N' + M'\tp V$ and hence $A_t\cdot(u\tp v)=0$.

Finally, 
$$B_t\cdot(u'\tp v')=\a B_lu'\tp B_mv'+ \b B_lu'\tp A_mv'+\gamma A_lu'\tp B_mv' +w'$$
where $w'\in \spann\left((\B_{M'}\tp\B_{N'})\minus(\soc M'\tp\soc N')\right)$. Again, Equation~(\ref{radinsoc}) gives 
$w'\in U\tp N' +M'\tp V$, and with (\ref{alpha})--(\ref{gamma}) we conclude that
$$B_t\cdot(u'\tp v')= B_lu'\tp A_mv'+ A_lu'\tp B_mv' +w' \in U\tp N' +M'\tp V.$$
 This proves that $\rad^t(M'\tp N')\subset U\tp N'+ M'\tp V$ and hence 
$h(M\tp N)=t-1=1+(l-1)\#(m-1)$.
\end{proof}

\section*{Acknowledgements}

The work for this article was partly carried out at the Mathematical Institute,
University of Oxford. The authors wish to thank Karin Erdmann for her encouragement and
support.

 \bibliographystyle{plain}
 
\end{document}